\newcommand{\Dyck}{\mathsf{Dyck}}
\newcommand{\ASM}{{\rm {\mathrm{{ASM}}}}}
\newcommand{\SLT}{{\rm {\mathrm{{FSLT}}}}}
\newcommand{\TSPP}{{\rm {\mathrm{{TSPP}}}}}
\title[Complexity problems in enumerative combinatorics]
{Complexity problems in enumerative combinatorics}
\author[Igor Pak]{
Igor Pak$^\star$}
\thanks{\today}
\thanks{\thinspace ${\hspace{-.45ex}}^\star$Department of Mathematics,
UCLA, Los Angeles, CA~90095.
\hskip.06cm
Email:
\hskip.06cm
\texttt{pak@math.ucla.edu}}
\newcommand{\dinv}{\mathrm{dinv}}
\newcommand{\sgn}{\mathrm{sgn}}
\newcommand{\bone}{\mathbf{1}}
\newcommand{\SYT}{\operatorname{SYT}}
\DeclareMathOperator{\area}{area}
\newtheorem{thm}{Theorem}[section]
\newtheorem{claim}[thm]{Claim}
\newtheorem{cor}[thm]{Corollary}
\newtheorem{prop}[thm]{Proposition}
\newtheorem{conj}[thm]{Conjecture}
\newtheorem{question}[thm]{Question}
\newtheorem{rem}[thm]{Remark}
\newtheorem{op}[thm]{Open Problem}
\newtheorem{ex}[thm]{Example}
\numberwithin{equation}{section} 
\def\zz{\mathbb Z}
\def\nn{\mathbb N}
\def\gg{\mathbb G}
\def\rr{\mathbb R}
\def\qqq{\mathbb Q}
\def\ov{\overline}
\def\Ga{\Gamma}
\def\la{\lambda}
\def\ga{\gamma}
\def\si{\sigma}
\def\ep{\epsilon}
\def\al{\alpha}
\def\be{\beta}
\def\om{\omega}
\def\vp{\varphi}
\def\CR{\mathcal R}
\def\cB{\mathcal B}
\def\ca{\mathcal A}
\def\cA{\mathcal A}
\def\cb{\mathcal B}
\def\cf{\mathcal F}
\def\cF{\mathcal F}
\def\cP{\mathcal P}
\def\cQ{\mathcal Q}
\def\cR{\mathcal R}
\def\ssu{\subset}
\def\<{\langle}
\def\>{\rangle}
\def\GL{ {\text {\rm GL} } }
\def\BS{ {\text {\rm BS} } }
\def\PSL{ {\text {\rm PSL} } }
\def\bT{{\text {\bf T} } }
\def\rP{\text{{\rm \textrm{P}}}}
\def\rQ{\text{{\rm \textrm{Q}}}}
\def\rCr{\text{{\rm \textrm{Cr}}}}
\def\Av{\text{{\rm \textsf{A}}}}
\def\rCT{ {\text {\rm CT}} }
\def\0{{\mathbf 0}}
\def\SP{\textup{\textsf{\#P}}}
\def\SEXP{\textup{\textsf{\#EXP}}}
\def\EXP{\textup{\textsf{EXP}}}
\def\NEXP{\textup{\textsf{NEXP}}}
\def\SP{\textup{\textsf{\#P}}}
\def\NP{\textup{\textsf{NP}}}
\def\GapP{\textup{\textsf{GapP}}}
\def\coNP{\textup{\textsf{co-NP}}}
\def\FP{\textup{\textsf{FP}}}
\def\PP{\textup{\textsf{P}}}
\def\PPA{\textup{\textsf{PPA}}}
\def\poly{\textup{\textrm{poly}}}
\def\PP{{\textup{\textsf{P}}}}
\def\FP{{\textup{\textsf{FP}}}}
\def\SP{{\textup{\textsf{\#P}}}}
\def\NP{{\textup{\textsf{NP}}}}
\def\bcP{{\pi}}
\def\bcQ{{\om}}
\def\.{\hskip.06cm}
\def\ts{\hskip.03cm}
\def\nin{\noindent}
\def\WI{\textup{\small $($\textrm{W1}$)$}}
\def\WII{\textup{\small $($\textrm{W2}$)$}}
\def\WIII{\textup{\small $($\textrm{W3}$)$}}
\def\WIV{\textup{\small $($\textrm{W4}$)$}}
\def\CI{\textup{\small $($\textrm{C1}$)$}}
\def\CII{\textup{\small $($\textrm{C2}$)$}}
\begin{document}

\begin{abstract}
We give a broad survey of recent results in Enumerative
Combinatorics and their complexity aspects.
\end{abstract}


\maketitle

\section*{Introduction} \label{sec:intro}

\nin
The subject of enumerative combinatorics is both classical and modern.
It is classical, as the basic counting questions go back millennia; yet
it is modern in the use of a large variety of the latest ideas and
technical tools from across many areas of mathematics.  The remarkable
successes from the last few decades have been widely publicized; yet they
come at a price, as one wonders if there is anything left to explore.
In fact, are there enumerative problems that cannot be resolved with
existing technology?  In this paper we present many challenges in the
field from the computational complexity point of view, and describe
how recent results fit into the story.

\smallskip

Let us first divide the problems into three major classes.  This division
is not as neat as it may seem, as there are problems which fit into multiple
or none of the classes, especially if they come from other areas. Still,
it provides us with a good starting point.

\medskip

\nin
$(1)$ \textbf{Formulas.} Let $\cP$ be a set of combinatorial
objects -- think of trees, words, permutations, Young tableaux, etc.
Such objects often come with a parameter~$n$ corresponding
to the size of the objects.  Let $\cP_n$ be the set of objects
of size~$n$.  Find a formula for $|\cP_n|$.

\smallskip

\nin
$(2)$ \textbf{Bijections.} Now let $\cP$ and $\cQ$ be two sets of
(possibly very different) combinatorial objects.  Suppose that
you know (or at least suspect) that \ts $|\cP_n|=|\cQ_n|$.
Find an explicit bijection $\vp: \cP_n \to \cQ_n$.

\smallskip

\nin
$(3)$ \textbf{Combinatorial interpretations.} Now suppose there
is an integer sequence $\{a_n\}$ given by a formula. Suppose that
you know (or at least suspect) that \ts $a_n\ge 0$ \ts for all~$n$.
Find a combinatorial interpretation of $a_n$, i.e.\ a set of
combinatorial objects $\cP$ such that $|\cP_n|=a_n$.

\medskip

People in the area are well skilled in both resolving and justifying
these problems.  Indeed, a formula is a good thing to have in case
one needs to compute $|\cP_n|$ explicitly for large~$n$,
find the asymptotics, gauge the structural complexity of the
objects, etc. A bijection between a complicated set $\cP$ and
a simpler set $\cQ$ is an even better thing to have,
as it allows one to better understand the nature of~$\cP$,
do a refined counting of $\cP_n$ with respect to various
statistics, generate elements of $\cP_n$ at random, etc.
Finally, a combinatorial interpretation is an excellent
first step that allows one to proceed to $(1)$ and
then~$(2)$, or at least obtain some useful estimates for~$a_n$.

\smallskip

Here is a more difficult part, which comes in the form of
inquisitive questions in each case:

\smallskip

\nin
$(1')$ \ts What is a formula?  What happens if there is no formula?
Can you prove there isn't one?  How do you even formalize the last
question if you don't know the answer to the first?

\smallskip

\nin
$(2')$ \ts There are, obviously, $|\cP_n|!$ bijections $\vp: \cP_n\to \cQ_n$,
so you must want a particular one, or at least one with certain properties?
Is there a ``canonical'' bijection, or at least the one you want the most?
What if there isn't a good bijection by whatever measure -- can you
prove that?  Can you even formalize that?

\smallskip

\nin
$(3')$ \ts Again, what do you do in the case when there isn't a combinatorial
interpretation?  Can you formally prove a negative result so that others
stop pursuing these problems?

\smallskip

We give some formal answers to these questions, at least in several interesting
special cases.  As the reader will see, the complexity approach brings
some clarity in most cases.  But to give the answers we first need to
explain the nature of combinatorial objects in each case, and to review
the literature.  That is the goal of this survey.

\medskip

Before we conclude the introduction, let us quote Gian-Carlo Rota, one of the
founding fathers of modern enumerative combinatorics:

\medskip

\begin{center}\begin{minipage}{12cm}%
\emph{``Combinatorics is an honest subject. No ad\`{e}les, no sigma-algebras.
You count balls in a box, and you either have the right number or you haven't.
You get the feeling that the result you have discovered is forever, because
it's concrete. Other branches of mathematics are not so clear-cut,
$[$\ts..\ts$]$ never fully convincing: you don't get a feeling of
having done an honest day's work. Don't get the wrong idea---combinatorics
is not just putting balls into boxes. Counting finite sets can be a
highbrow undertaking, with sophisticated techniques.''~\cite{Rota-Los-Alamos}
}\end{minipage}\end{center}

\medskip

\nin
Rota is right, of course, historically speaking.  When the result is positive,
it's ``forever'' indeed, and this partly explains the glamour of $(1)-(3)$.
But when the result is negative, when questions $(1')-(3')$ are addressed,
this certainty disappears.  Our current understanding of a ``formula''
and a ``good bijection'' can change in the future, perhaps fundamentally,
as it has changed in the past.  Forever these results are certainly not.
In fact, when the complexity assumptions such as \ts
$\PP\ne \NP$, \ts $\FP\ne \SP$, etc.\ become essential, one must learn to
accept the uncertainty and learn to navigate in this new environment\ldots \. at
least until computational complexity brings more clarity to these matters.

\medskip

\section{What is a formula?}

\subsection{Basic examples}  
We start with the Fibonacci numbers \cite[\href{https://oeis.org/A000045}{A000045}]{OEIS}:
\begin{equation}\label{eq:Fib1}
  F_{n} \, = \, F_{n-1} + F_{n-2}, \ \quad F_0=F_1=1
\end{equation}
\begin{equation}\label{eq:Fib2}
\qquad F_n \, = \, \sum_{i=0}^{\lfloor n/2\rfloor}  \, \binom{n-i}{i}
\end{equation}
\begin{equation}\label{eq:Fib3}
 F_n \, = \, \frac{1}{\sqrt{5}}\.\Bigr(\ts \phi^n \. - \. (-\phi)^{-n}\ts\Bigr)\ts, \quad \text{where}
\quad \phi\. = \. \frac{1+\sqrt{5}}2
\end{equation}
\begin{equation}\label{eq:Fib4}
 F_n \, = \, \bigl(A^n\bigr)_{2,2} \ , \quad \text{where}
\quad
A\. = \. \begin{pmatrix} 0 & 1 \\
1 & 1
\end{pmatrix}.
\end{equation}
Equation~\eqref{eq:Fib1} is usually presented as a definition, but can also be used
to compute $F_n$ in \poly$(n)$ time.  Equation~\eqref{eq:Fib2} is useful to place
Fibonacci numbers in the hierarchy of integer sequences (see below).
Equation~\eqref{eq:Fib3} is useful to obtain asymptotics, and equation~\eqref{eq:Fib4} gives
a fast algorithm for computing~$F_n$ (by repeated squaring). \emph{The moral}:
there is no one notion of a ``good formula'', as different equations have
different uses.

Let us consider a few more plausible formula candidates:
\begin{equation} D_n \, = \, \left[\left[n!/e\right]\right], \quad
\text{where} \ \, [[x]] \, \ \text{denotes the nearest integer}
\label{eq:formula-D}
\end{equation}
\begin{equation}
C_n \, = \, [t^n] \ts \frac{1-\sqrt{1-4t}}{2t}
\label{eq:formula-C}
\end{equation}
\begin{equation}
E_n \, = \, n! \ts\cdot\ts [t^n] \. y(t), \quad \text{where} \ \ 2\ts y' \. = \. 1\ts +\ts y^2, \ \, y(0)=1
\label{eq:formula-E}
\end{equation}
\begin{equation}
T_n \, = \, (n-1)! \ts \cdot \ts [t^n] \. z(t), \quad \text{where} \ \ z \. = \. t\ts e^{t\ts e^{t\ts e^{t\ts e^{\iddots}}}}.
\label{eq:formula-T}
\end{equation}
Here $D_n$ is the number of \emph{derangements}
(fixed-point-free permutations in $S_n$), \ts $C_n$ is the
\emph{Catalan number} (the number of binary trees with $n$ vertices),
$E_n$ is the \emph{Euler number} (the number of \emph{alternating permutations}
$\si(1)<\si(2)>\si(3)<\si(4)>\ldots $ in $S_n$), and $T_n$ is the
\emph{Cayley number} (the number of spanning trees in the complete graph $K_{n}$).
Here and everywhere below we use $[t^n] \ts F(t)$ \ts to denote the
coefficient of $t^n$ in~$F(t)$.

Note that in each case above, there are better formulas for applications:
\begin{equation} \label{eq:formula-Derangements}
  D_n \, = \, n!\. \sum_{k=0}^n \.\frac{(-1)^k}{k!}  \\
\end{equation}
\begin{equation}\label{eq:formula-Catalan}
  C_n \, = \, \frac{1}{n+1}\binom{2n}{n}\\
\end{equation}
\begin{equation} \label{eq:formula-Euler}
E_n \, = \, n! \cdot [t^n] \. y(t), \quad \text{where} \ \ y(t)\. = \. \tan(t) + \sec(t) \\
\end{equation}
\begin{equation}\label{eq:formula-Cayley}
 T_n \, = \, n^{n-2}\ts.
\end{equation}
In all four cases, the corresponding formulas are equivalent by mathematical
reasoning. Whether or not you accept \eqref{eq:formula-D}--\eqref{eq:formula-T}~as formulas,
it is their meaning that's important, not their form.

\smallskip

Finally, consider the following equations for the \emph{number of partitions}~$p(n)$, and
\emph{prime-counting function}~$\pi(n)$:
\begin{equation}\label{eq:formula-part}  p(n) \, = \, [t^n] \. \prod_{i=1}^\infty \frac{1}{1-t^i}
\end{equation}
\begin{equation}\label{eq:formula-primes}
\pi(n) \, = \, \sum_{k=2}^{n} \. \left(\ts\left\lfloor \frac{(k-1)!+1}{k}\right\rfloor
\. - \. \left\lfloor \frac{(k-1)!}{k}\right\rfloor \ts\right).
\end{equation}
Equation~\eqref{eq:formula-part} is due to Euler~(1748), and had profound
implications in number theory and combinatorics, initiating the whole
area of \emph{partition theory} (see e.g.~\cite{And}).
Equation~\eqref{eq:formula-primes} follows easily from Wilson's theorem
(see e.g.~\cite[$\S$1.2.5]{CPom}).  Esthetic value aside,
both equations are largely unhelpful for computing purposes and follow directly
from definitions. Indeed, the former is equivalent to the standard counting
algorithm (\emph{dynamic programming}), while the latter is an iterated
divisibility testing in disguise.

In summary, we see that the notion of ``good formula'' is neither syntactic nor semantic.
One needs to make a choice depending on the application.

\smallskip

\subsection{Wilfian formulas}  In his pioneering 1982 paper~\cite{Wilf},
Wilf proposed to judge a formula from the complexity point of view.
He suggested two definitions of ``good formulas'' for computing an
integer sequence $\{a_n\}$:

\smallskip

\quad \WI \ There is an algorithm that computes $a_n$ in time \ts \poly$(n)$.

\smallskip

\quad \WII \ There is an algorithm that computes $a_n$ in time \ts $o(a_n)$.

\smallskip

\nin
In the literature, such algorithms are called sometimes \emph{Wilfian formulas}.
Note that~\WI~is aimed (but not restricted) to apply to sequences $\{a_n\}$
of at most exponential growth \ts $a_n=\exp O(n^c)$, while~\WII~for $\{a_n\}$
of at most polynomial growth. See e.g.~\cite{GP-words,FS} for more
on growth of sequences.

Going over our list of examples we conclude that~\eqref{eq:Fib1}, \eqref{eq:Fib2},
\eqref{eq:Fib4}, \eqref{eq:formula-Derangements}, \eqref{eq:formula-Catalan} and~\eqref{eq:formula-Cayley}
are all transparently Wilfian of type~\WI.  Equations~\eqref{eq:Fib3}, \eqref{eq:formula-C},
\eqref{eq:formula-E} and~\eqref{eq:formula-Euler} are Wilfian of type~\WI~in a less obvious but
routine way (see below).  Equations~\eqref{eq:Fib3} and~\eqref{eq:formula-D} do give rise
to ad hoc \ts \poly$(n)$ \ts algorithms, but care must be applied when dealing with irrational
numbers. E.g., one must avoid circularity, such as when computing $n$-th prime $p_n$ by using the
\emph{prime constant} \ts $\sum_{n} 1/2^{p_n}$, see e.g.~\cite[$\S$1.2.5]{CP} and
\cite[\href{https://oeis.org/A051006}{A051006}]{OEIS}.
Finally, equation~\eqref{eq:formula-T} is not Wilfian of type~\WI, while~\eqref{eq:formula-primes}
is not Wilfian of type~\WII.

Let us add two more notions of a ``good formula'' in the same spirit, both of which
are somewhat analogous but more useful than~\WII:

\smallskip

\quad \WIII \ There is an algorithm that computes $a_n$ in time \ts \poly$(\log n)$.

\smallskip

\quad \WIV \ There is an algorithm that computes $a_n$ in time \ts $n^{o(1)}$.

\smallskip

\nin
Now, for a \emph{combinatorial sequence} $\{a_n\}$ one can ask if there is a
Wilfian formula.  In the original paper~\cite{Wilf} an explicit example is given:

\begin{conj}[Wilf] \label{c:Wilf}
Let $a_n$ be the number of unlabeled graphs on $n$ vertices.  Then $\{a_n\}$ has
no Wilfian formula of type~\WI.
\end{conj}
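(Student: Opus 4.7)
My plan starts from the Burnside / Cauchy--Frobenius formula for the action of $S_n$ on the set of $2^{\binom{n}{2}}$ labeled graphs:
\[
a_n \. = \. \frac{1}{n!} \sum_{\si \in S_n} 2^{c_2(\si)} \. = \. \sum_{\la \vdash n} \frac{1}{z_\la} \. 2^{c_2(\la)},
\]
where $c_2(\la)$ is the number of orbits, on $\binom{[n]}{2}$, of any permutation of cycle type~$\la$. Each term is computable in $\poly(n)$, and since $|\{\la \vdash n\}| = p(n) = \exp O(\sqrt{n})$, the Burnside sum already gives an algorithm of subexponential but super-polynomial running time. The goal of a proof is to rule out any genuine polynomial-time speedup of this natural baseline.

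My strategy would be to establish \emph{conditional} hardness: reduce some problem believed to require super-polynomial time to the evaluation of $\{a_n\}$. Since $a_n$ has no explicit input, the reduction must come from a parameter-free source. A natural candidate is to consider a family of closely related quantities $\{a_n^{(k)}\}$ -- say, the number of unlabeled graphs on $n$ vertices whose automorphism group has order~$k$, or contains a fixed conjugacy class -- and show that $\{a_n\}$ encodes all $\{a_n^{(k)}\}$ via Möbius inversion on the subgroup lattice of~$S_n$. One would then argue that a $\poly(n)$ algorithm for the left-hand side yields $\poly(n)$ evaluation of sufficiently many $a_n^{(k)}$, and that this family already contains $\SP$-hard specializations (for instance, counting graphs with a prescribed automorphism being tied to counting fixed-point-free actions of a given group). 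A subsidiary step is to verify that no unexpected collapse places $a_n$ into the same complexity regime as, say, permanents of $0/1$ matrices of bounded ``symmetric'' type.

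The principal obstacle is precisely the absence of an input: $a_n$ depends only on~$n$, so no standard Karp- or Cook-style reduction applies directly. Any conditional hardness result must fabricate the hard instance from the internal arithmetic of the Burnside sum, which is fragile -- small algebraic identities could always collapse what appears to be a hard sum into something efficiently computable. An alternative route is an unconditional algebraic lower bound: one could try to show that $\{a_n\}$ cannot be produced by short straight-line programs over $\qqq$, in the spirit of Strassen-style degree bounds. The structural fact that the ordinary generating function $\sum a_n t^n$ is not $P$-recursive (its coefficients grow doubly exponentially, faster than any holonomic sequence permits) is suggestive but insufficient, since non-holonomicity is known not to imply super-polynomial computability in general. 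In my estimation, bridging that last gap -- moving from ``no nice closed form'' to ``no fast algorithm'' for a parameter-free integer sequence -- is the essential difficulty, and it is why Conjecture~\ref{c:Wilf} has remained open for four decades.
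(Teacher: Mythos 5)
You should first note that the statement you are trying to prove is labeled a \emph{conjecture} in the paper, and the paper offers no proof of it: immediately after stating it, the survey explains that the idea behind it is that P\'olya-theory formulas are ``fundamentally not Wilfian,'' and then explicitly says the author does \emph{not} believe the conjecture, in light of Babai's quasipolynomial-time algorithm for {\sc Graph Isomorphism} --- it is ``conceivable that both problems can be solved in \poly$(n)$ time.'' So there is no proof to match your attempt against; if anything, the paper's stance is that effort is better spent looking for a polynomial-time algorithm (compare the author's optimism about the analogous Conjecture~\ref{c:planar-poly}) than for a hardness proof.

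As for your proposal itself: your baseline via the Cauchy--Frobenius sum over cycle types, giving an \ts $\exp O(\sqrt{n})$-time algorithm, is correct and is indeed how $a_n$ is actually computed. But everything after that is a research program, not an argument, and the one step that would carry all the weight --- manufacturing a reduction from a believed-hard problem to the evaluation of a \emph{parameter-free} integer sequence --- is precisely the step nobody knows how to perform; you identify this obstacle honestly, but identifying it does not overcome it. The M\"obius-inversion idea has no concrete content as stated: you would need to exhibit a specific family $\{a_n^{(k)}\}$ that is provably hard \emph{and} provably recoverable in polynomial time from the single sequence $\{a_n\}$, and neither half is substantiated. Two smaller corrections: $a_n \sim 2^{\binom{n}{2}}/n! = \exp\Theta(n^2)$ is super-exponential but not doubly exponential (this still rules out P-recursiveness, since P-recursive sequences grow at most like $\exp O(n\log n)$); and, as you concede, non-holonomicity is irrelevant here --- the paper's Proposition~\ref{p:Wilf} only goes in the direction ``D-algebraic $\Rightarrow$ Wilfian,'' so no algebraic transcendence result can forbid a \poly$(n)$ algorithm. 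The gap between ``no nice formula'' and ``no fast algorithm'' that you point to at the end is exactly why this remains open, and your proposal leaves it open.
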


See~\cite[\href{https://oeis.org/A000088}{A000088}]{OEIS} for this sequence.
Note that by the classical Erd\H{o}s--R\'enyi result~\cite{ER}
(see also~\cite[$\S$1.6]{Bab-handbook}), we have \ts
$a_n \sim 2^{\binom{n}{2}}/n!$, so the problem is not
approximating~$a_n$, but computing it exactly.
For comparison, the sequence $\{c_n\}$ of the number of connected
(labeled) graphs does have a Wilfian formula of type~\WI:
$$
c_n \, = \, 2^{\binom{n}{2}} \. - \. \frac{1}{n} \.
\sum_{k=1}^{n-1} \. k\ts\binom{n}{k} \ts 2^{\binom{n-k}{2}} \ts c_k
$$
(see~\cite[\href{https://oeis.org/A001187}{A001187}]{OEIS} and~\cite[p.~7]{HP}).

The idea behind Conjecture~\ref{c:Wilf} is that \emph{P\'olya theory}
formulas (see e.g.~\cite{HP})
are fundamentally not Wilfian.  We should mention that we do not believe
the conjecture in view of Babai's recent quasipolynomial time algorithm for
{\sc Graph Isomorphism}~\cite{Bab}. While the connection is indirect,
it is in fact conceivable that both problems can be solved in \poly$(n)$ time.

\begin{question}
Let $\pi(n)$ denote the number of primes $\le n$.
Does $\{\pi(n)\}$ have a Wilfian formula of type~\WIV?
\end{question}

Initially, Wilf asked about formula of type~\WII~for $\{\pi(n)\}$,
and such algorithm was given in~\cite{LMO}.  Note that even the parity
of~$\pi(n)$ is hard to compute (cf.~\cite{TCH}), making unlikely a positive
answer to the question above.

\smallskip

\subsection{Complexity setting and graph enumeration}\label{ss:formula-comp}
Let $\cP_n$ denote the set of certain \emph{combinatorial objects} of size~$n$.
Formally, this means that one can decide if \ts $X\in \cP_n$ \ts in time~\poly$(n)$.
The problem of computing $a_n:=|\cP_n|$ has the input $n$ of \emph{bit-length} $O(\log n)$,
much too small for the (usual) polynomial hierarchy. Instead, the
\emph{exponential hierarchy} is used: $\NEXP$
for existence of combinatorial objects which can be verified in time $n^{O(1)}$,
and~\SEXP~for counting such objects.\footnote{To bring the problem
into the polynomial hierarchy, the input $n$ should be given
in \emph{unary}.}

For example, let $a_n=|\cP_n|$ be the set of (labeled) planar
$3$-regular $3$-connected graphs on~$n$ vertices.  Graphs
in $\cP_n$ are exactly graphs of simple $3$-dimensional polytopes.
Since testing each property can be done in \poly$(n)$ time,
the decision problem is naturally in~\NEXP, and the
counting problem is in~\SEXP.  In fact, the decision problem
is trivially in~\PP, since such graphs exist for all
even $n\ge 4$ and don't exist for odd~$n$.
Furthermore, Tutte's formula for the number of rooted plane
triangulations gives a simple product formula for $a_n$,
and thus can be computed in~\poly$(n)$ time, see~\cite[Ch.~10]{Tutte}.

On the one hand, counting the number of non-Hamiltonian
graphs in $\cP_n$ is not naturally in~\SEXP, since testing
non-Hamiltonicity is \coNP-complete in this case~\cite{GJT}.
On the other hand, the corresponding decision problem
(the existence of such graphs) is again in~\PP~by Tutte's
disproof of Tait's conjecture, see~\cite[Ch.~2]{Tutte}.

\smallskip

Note that {\sc Graph Isomorphism} is in~\PP~for trees, planar graphs
and graphs of bounded degree, see e.g.~\cite[$\S$6.2]{Bab-handbook}.
The discussion above suggests the following counterpart
of Wilf's Conjecture~\ref{c:Wilf}.

\begin{conj} \label{c:planar-poly}
Let $a_n$ be the number of unlabeled plane
triangulations with $n$ vertices, and let~$b_n$ be the number of
$3$-connected planar graphs with $n$ vertices.
Then $\{a_n\}$ and $\{b_n\}$ can be computed
in {\rm poly}$(n)$ time.
\end{conj}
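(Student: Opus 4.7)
The plan is to apply Burnside's (orbit-counting) lemma to reduce the enumeration of unlabeled plane structures to the enumeration of rooted structures fixed by each symmetry. For plane maps a choice of root kills all combinatorial automorphisms, so the number of unlabeled plane triangulations of size~$n$ equals $\frac{1}{|G_n|}$ times a sum, over conjugacy classes of rotation/reflection types, of the count of $n$-vertex rooted triangulations fixed by a representative symmetry. The identity contribution is exactly Tutte's rooted enumeration, which admits the closed product formula already noted in the text and is therefore computable in $\poly(n)$ time.

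Next, one needs to enumerate triangulations fixed by a nontrivial rotation or reflection. Since a plane triangulation is combinatorially an embedding on $S^{2}$, any symmetry group is a finite subgroup of $O(3)$: cyclic $C_{k}$, dihedral $D_{k}$, or one of a bounded list of exceptional polyhedral groups (the latter realized only for extremely restricted vertex counts). Quotienting a fixed triangulation by its symmetry yields a triangulated orbifold --- a disk or cone with prescribed boundary and cone-point structure --- whose admissible types are controlled by divisors of~$n$ and of related edge/face counts, so only $O(\log n)$ types contribute. For each admissible orbifold type, the corresponding rooted orbifold triangulations obey a Tutte-style algebraic equation obtained via the quadratic method with marked boundary; following the general scheme of Liskovets for unrooted planar map enumeration, each such term should be computable in $\poly(n)$ time.

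For the $3$-connected planar graph count $b_{n}$, Whitney's unique embedding theorem says that each such graph has a unique embedding in $S^{2}$ up to orientation reversal. Hence $b_{n}$ equals, up to a correction for self-mirror graphs, the number of $3$-connected planar maps on $n$ vertices modulo orientation-preserving isomorphism. The same Burnside/quotient analysis applies, starting from the rooted enumeration of $3$-connected planar maps, which is $\poly(n)$-computable by Tutte's recursion or the substitutional decomposition of Mullin--Schellenberg. Graphs equal to their mirror image contribute a separate term counting maps fixed by an orientation-reversing involution, again an orbifold-quotient enumeration.

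The principal obstacle is the second step. While the number of admissible orbifold types at size $n$ is only $O(\log n)$, one needs each individual quotient enumeration to run in polynomial time, and more subtly one must verify that the resulting algebraic equations for orbifold generating functions have solutions whose $n$-th coefficient can actually be extracted in $\poly(n)$ arithmetic operations. The algebraicity itself follows from Tutte's quadratic method, so this is plausible, but to the best of my knowledge it has not been uniformly established in the literature; bridging that gap, especially for the reflective and dihedral quotients needed in the $3$-connected case, is where the real work lies.
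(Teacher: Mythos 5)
The statement you are proving is not a theorem of the paper but Conjecture~\ref{c:planar-poly}: the paper offers no proof, only the remarks that the analogous result for unrooted \emph{maps} is known (Fusy) and that for triangulations there is partial evidence in the work of Kang and Spr\"ussel on symmetries of unlabelled planar triangulations. Your outline is exactly the attack those references suggest -- Liskovets-style Burnside/quotient reduction to rooted enumeration plus enumeration of quotient (orbifold) maps for each symmetry type, with Whitney's theorem handling the passage from $3$-connected planar graphs to maps -- so as a \emph{plan} it is the right one and consistent with what the paper has in mind.

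But it is not a proof, and you say so yourself: the entire difficulty of the conjecture is concentrated in the step you defer, namely showing that for every admissible symmetry type the quotient enumeration is $\poly(n)$-computable. Asserting that each quotient family ``obeys a Tutte-style algebraic equation obtained via the quadratic method'' and ``should be computable in $\poly(n)$ time'' is precisely the unproven content; the reflective and dihedral quotients (which produce maps with boundary and cone points, not ordinary triangulations), the exceptional polyhedral groups, and the self-mirror correction in the $3$-connected case are all places where no uniform argument exists in the literature. Two smaller inaccuracies: a rooted map is never fixed by a nontrivial automorphism, so the Burnside sum must be set up over quotient maps (orbits of rootings), not over ``rooted triangulations fixed by a symmetry''; and the number of admissible symmetry orders is $d(n)=n^{o(1)}$, not $O(\log n)$ -- harmless for the complexity claim, but worth stating correctly. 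As it stands your text is a research programme for the conjecture, not a resolution of it.
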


We are very optimistic about this conjecture as for maps this is
already known~\cite{Fusy}.  For triangulations
there is some recent evidence in~\cite{KS}.

\begin{thm} \label{t:planar-poly}
Let  $a_n$ be the number of unlabeled trees with $n$ vertices.
Then $\{a_n\}$ can be computed in {\rm poly}$(n)$ time.
\end{thm}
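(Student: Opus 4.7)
The plan is to reduce the enumeration of unlabeled unrooted trees to that of unlabeled \emph{rooted} trees via Otter's dissimilarity formula, and then to compute the latter by a polynomial-size recurrence coming from the multiset construction on the children of the root.

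First, let $r_n$ denote the number of unlabeled rooted trees on $n$ vertices and set $R(x):=\sum_{n\geq 1} r_n\ts x^n$. Since a rooted tree is a root together with an unordered multiset of rooted subtrees, one has
\[
R(x) \, = \, x \, \prod_{i\geq 1} \, (1-x^i)^{-r_i}.
\]
Taking the logarithmic derivative and writing $s_k := \sum_{d\mid k} d\ts r_d$, the coefficients of $x\ts R'(x) = R(x)\bigl(1+\sum_{k\geq 1} s_k x^k\bigr)$ yield the classical recurrence
\[
r_{n+1} \, = \, \frac{1}{n}\sum_{k=1}^{n} s_k\ts r_{n+1-k}, \qquad r_1 = 1,
\]
which computes $r_1,\ldots,r_n$ in $O(n^2)$ arithmetic operations.

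Second, by the theorem of Otter (1948), the generating function $T(x) := \sum_{n\geq 1} a_n\ts x^n$ for unlabeled unrooted trees satisfies
\[
T(x) \, = \, R(x) \. - \. \tfrac{1}{2}\ts R(x)^2 \. + \. \tfrac{1}{2}\ts R(x^2),
\]
so that
\[
a_n \, = \, r_n \. - \. \tfrac{1}{2}\sum_{i+j=n} r_i\ts r_j \. + \. \tfrac{1}{2}\,\varepsilon_n\, r_{n/2},
\]
where $\varepsilon_n=1$ if $n$ is even and $0$ otherwise. Given $r_1,\ldots,r_n$, this produces $a_n$ in $O(n)$ additional arithmetic operations.

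Third, to convert ``arithmetic operations'' into genuine \poly$(n)$ running time one must bound the bit-length of intermediate integers. Every unlabeled rooted tree on $n$ vertices is a quotient of a plane rooted tree, so $r_n \leq C_{n-1} < 4^n$; consequently each $r_i$, each $s_k$, and each partial sum appearing above has bit-length $O(n)$, and each integer operation costs \poly$(n)$ time. The whole computation therefore fits into \poly$(n)$ time.

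The only nontrivial ingredient is Otter's dissimilarity formula itself, whose proof rests on the classical fact that a finite tree has either a unique centroid vertex or a unique centroid edge; the formula is precisely the inclusion--exclusion accounting for the overcounting that occurs when one roots an unrooted tree at its centroid. The recurrence for $r_n$ is essentially bookkeeping, so beyond invoking Otter's theorem no real obstacle remains.
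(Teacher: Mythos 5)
Your proof is correct and follows essentially the same route as the paper: the same divisor-sum recurrence for the number of unlabeled rooted trees, the same Otter-type formula passing from rooted to unrooted trees, and the same bound $r_n\le C_{n-1}$ to control bit-lengths. The extra derivations (the product form of $R(x)$ and the centroid argument behind Otter's formula) are fine but not needed beyond citing the standard references.
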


\begin{proof}
Denote by $b_n$ the number of unlabeled rooted trees with $n$ vertices.  We have
\ts $b_n \le C_{n-1}$ \ts since the Catalan number $C_{n}$ is the number of plane trees
with $n+1$ vertices, so $\log b_n = O(n)$.  We also have:
%
$$
b_{n+1} \, = \, \frac1n \, \sum_{k=1}^{n} \. b_{n-k+1} \Biggl[\sum_{d\ts{}|\ts{}k}  \. d\ts b_d\Biggr]\ts,
$$
see e.g.~\cite[$\S$5.6]{Finch} and \cite[\href{https://oeis.org/A000081}{A000081}]{OEIS}.	
Thus, $\{b_n\}$ can be computed in \poly$(n)$ time.
On the other hand:
$$
a_n \, = \, b_n \. - \. \frac12 \. \sum_{k=1}^{n-1} \. b_k\ts b_{n-k} \. + \. \left\{\aligned
& \frac12 \. b_{n/2}\ts, \ \ n \  \text{even}; \\
& \ \ 0\ts, \ \ \ \text{otherwise},
\endaligned\right.
$$
see e.g.~\cite[$\S$3.2]{HP} and~\cite[\href{https://oeis.org/A000055}{A000055}]{OEIS}.
This implies the result.
\end{proof}

\smallskip

Let $a_n$ denote the number of $3$-regular labeled graphs on $2n$ vertices.
The sequence $\{a_n\}$ can be computed in polynomial time via
the following recurrence relation, see~\cite{GJ}
and~\cite[\href{https://oeis.org/A002829}{A002829}]{OEIS}.

{\small
\begin{equation}\label{eq:3-regular}
\aligned
& 3(3n-7)(3n-4)\cdot a_n \ = \ 9(n-1)(2n-1)(3n-7)(3n^2 - 4n + 2)\cdot a_{n-1} \\
& \quad \. + \. (n-1)(2n-3)(2n-1)(108n^3 - 441n^2 + 501n - 104) \cdot a_{n-2} \\
& \quad\. + \.
2(n-2)(n-1)(2n-5)(2n-3)(2n-1)(3n-1)(9n^2 - 42n + 43)\cdot a_{n-3} \\
& \quad\. - \.
2(n-3)(n-2)(n-1)(2n-7)(2n-5)(2n-3)(2n-1)(3n-4)(3n-1)\cdot a_{n-4}
\endaligned
\end{equation}
}

\nin
It was shown in~\cite{Ges} that similar polynomial recurrences exist for the number
of $k$-regular labeled graphs, for every fixed $k\ge 1$.

\begin{conj} \label{c:planar-poly-regular}
Fix $k\ge 1$ and let $a_n$ be the number of unlabeled
$k$-regular graphs with $n$ vertices.  Then $\{a_n\}$
can be computed in \poly$(n)$ time.
\end{conj}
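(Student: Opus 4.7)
The natural framework is P\'olya enumeration. By Burnside's lemma,
\begin{equation*}
a_n \, = \, \frac{1}{n!}\,\sum_{\sigma \in S_n}\, \ssf_k(\sigma)
\, = \, \sum_{\la \vdash n}\, \frac{\ssf_k(\la)}{z_\la}\ts,
\end{equation*}
where $\ssf_k(\sigma)$ denotes the number of labeled $k$-regular graphs on $[n]$ fixed by $\sigma$, which depends only on the cycle type $\la \vdash n$ of $\sigma$, and $z_\la$ is the standard centralizer size. I split the task into two subproblems: (i) computing each individual $\ssf_k(\la)$ in time \poly$(n)$, and (ii) summing the contributions across all $\la \vdash n$ in time \poly$(n)$.

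For step (i), a $\sigma$-invariant graph is a union of orbits of $\sigma$ acting on the edge set $\binom{[n]}{2}$. By symmetry, every vertex in a given cycle of $\sigma$ receives the same degree contribution from each chosen orbit, so the $k$-regularity condition reduces to a system of at most $\ell(\la)$ linear equations in the orbit multiplicities (with $\{0,1\}$ unknowns in the simple-graph case). Since the total degree is capped at $kn$ and $k$ is fixed, a transfer-matrix or dynamic program over the cycles of $\sigma$, tracking bounded ``local types'' of edge orbits between pairs of cycles, should compute $\ssf_k(\la)$ in time \poly$(n)$. This is the cycle-structure enriched analogue of the configuration-model count that underlies~\eqref{eq:3-regular}.

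Step (ii) is the heart of the problem. The naive sum has $p(n) = e^{\Theta(\sqrt{n})}$ terms, which is subexponential but not polynomial. The most promising route is to follow Gessel~\cite{Ges}: he derived polynomial recurrences such as~\eqref{eq:3-regular} for the labeled count from a symmetric-function identity for the cycle index of the species of $k$-regular graphs. The goal would be to show that, for fixed $k$, the corresponding \emph{unlabeled} cycle-index evaluation is again D-finite in the generating variable, which would yield a P-recurrence on $\{a_n\}$ directly analogous to~\eqref{eq:3-regular}, and hence a \poly$(n)$ algorithm by unrolling.

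The main obstacle is exactly this D-finiteness transfer. The passage from labeled to unlabeled counts involves a plethystic/power-sum substitution in the cycle-index series, and plethysm does not in general preserve D-finiteness. The hope is that, because $k$ is fixed, only a bounded number of ``local'' power-sum components contribute nontrivially, so the sum truncates in a controllable way and the resulting series remains P-recursive. Making this precise, whether directly or by bypassing P-recursiveness in favor of an ad hoc polynomial-time evaluation scheme for the Burnside sum tailored to fixed~$k$, is where the real work will lie; the fixed-$k$ hypothesis must be used in an essential way, since the $k$-varying analogue is surely much harder.
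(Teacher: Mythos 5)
First, a point of order: the statement you are proving is Conjecture~\ref{c:planar-poly-regular}, which the paper states as an \emph{open conjecture} and does not prove; the only comments offered are that the cases $k=1,2$ are elementary and that $k=3$ is related to the enumeration of certain $2$-groups (via Luks). So your text should be judged as a research plan for an open problem, and to your credit you frame it that way yourself. Still, both halves of the plan contain gaps that are more serious than details to be filled in.

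In step (i), choosing edge-orbits of $\sigma$ so that every vertex receives degree exactly $k$ is a degree-constrained subgraph counting problem on the (multi)graph whose vertices are the cycles of $\sigma$; such counting problems are \SP-hard in general (already for $k=1$ this is perfect-matching counting), so the assertion that a transfer-matrix argument ``should'' run in \poly$(n)$ time needs a genuine new idea exploiting the special structure here (the graph on cycles is essentially complete, and the fixed-$k$ constraint eliminates all orbits between cycles whose lengths have small gcd). Plausible, but unproven. Step (ii) is, as you say, the heart of the matter: the Burnside sum has $p(n)=e^{\Theta(\sqrt{n})}$ terms, and the hoped-for transfer of D-finiteness through the plethystic substitution is precisely the mechanism the paper flags as the obstruction --- the discussion around Wilf's Conjecture~\ref{c:Wilf} states that P\'olya-theory formulas are ``fundamentally not Wilfian,'' and the one unlabeled enumeration the paper does carry out (Theorem~\ref{t:planar-poly}, unlabeled trees) proceeds by a bespoke rooted-tree recursion that sidesteps the cycle-index sum entirely. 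The paper's hint that $k=3$ connects to $2$-groups suggests the intended attack goes through automorphism-group structure and canonical forms for bounded-degree graphs rather than through P\'olya enumeration. In short: your plan is a reasonable first pass at an open problem, but neither step is close to a proof, and the second step runs head-on into the known obstruction rather than around it.
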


For $k=1,2$ the problem is elementary, but for $k=3$ is related to
enumeration of certain $2$-groups (cf.~\cite{Luks}).

\smallskip

Consider now the problem of computing the number $f(m,n)$ of triangulations
of an integer $[m\times n]$ grid (see Figure~\ref{f:santos}).  This problem
is a distant relative of Catalan numbers $C_n$ in~\eqref{eq:formula-Catalan}
which Euler proved counts the number of triangulations of a convex $(n+2)$-gon
(see~\cite{Sta-cat}), and is one of the large family of triangulation
problems (see~\cite{DRS}). Kaibel and Ziegler prove in~\cite{KZ} that $f(m,n)$
can be computed in \poly$(n)$ time for every fixed~$m$, but report that
their algorithm is expensive even for relatively small $m$ and~$n$
(see~\cite[\href{https://oeis.org/A082640}{A082640}]{OEIS}).

\begin{question}
Can $\{f(n,n)\}$ can be computed in \poly$(n)$ time?
\end{question}

{\small
\begin{figure}[hbt]
 \begin{center}
   \includegraphics[height=2.6cm]{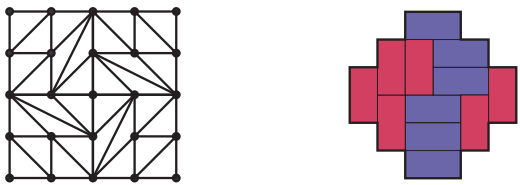}
   \caption{Grid triangulation of $[5 \times 5]$ and a domino tiling.}
   \label{f:santos}
 \end{center}
\end{figure}
}

\subsection{Computability setting and polyomino tilings}\label{ss:tilings}
Let $a_n$ be the number of domino tilings on a $[2n\times 2n]$ square.
The Kasteleyn and Temperley--Fisher classical determinant formula (1961)
for the number of perfect matchings of planar graphs gives a \poly$(n)$
time algorithm for computing $\{a_n\}$, see e.g.~\cite{Ken-dimer,LP-match}.
This foundational result opens the door to potential generalizations, but,
unfortunately, most of them turn out to be computationally hard.

First, one can ask about computing the number $b_n$ of $3$-dimensional
domino tilings of a $[2n\times 2n \times 2n]$ box.  Or how about
the seemingly simpler problem of counting the number $c_n$ of $3$-dimensional
domino tilings of a ``slim'' $[2\times n \times n]$ box?  We don't
know how to solve either problem, but both are likely to be
difficult.  The negative results include \SP-completeness
of the counting problem for general and slim regions~\cite{PY-domino,Val-alg},
and topological obstacles, see~\cite{FKMS} and~\cite[Prop.~8.1]{PY-domino}.

Now consider a fixed finite set $\bT=\{\tau_1,\ldots,\tau_k\}$
of general polyomino tiles on a square grid: $\tau_i \ssu \zz^2$, $1\le i\le k$.
To tile a region $\Ga \ssu \zz^2$, one must cover it with
copies of the tiles without overlap. These copies must be parallel
translations of $\tau_i$ (rotations and reflections are not allowed).
There exist $\NP$-complete tileability problems and~$\SP$-complete tiling
counting problems even for a fixed
set of a few small tiles.  We refer to~\cite{pak-horizons} for short
survey of the area.

\smallskip

For a fixed~$\bT$, let $g(m,n)$ denote the number of tilings of
$[m\times n]$ with~$\bT$.  Is $g(m,n)$ computable in polynomial
time?  The following conjecture suggests otherwise.

\begin{conj}\label{c:tileability-SEXP}
There exists a finite set of tiles $\bT$ such that counting
the number of tilings of \ts
$[n\times n]$ \ts with~$\bT$ is $\SEXP$-complete.
\end{conj}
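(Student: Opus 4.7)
The plan is to reduce from a generic $\SEXP$ counting problem to counting tilings of $[n\times n]$, using the classical paradigm of encoding space-time diagrams of Turing machines as tilings.

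First I would show the problem lies in $\SEXP$. For a fixed tile set $\bT$, every tiling of $[n\times n]$ uses at most $n^2$ tiles of bounded size, so a tiling is a witness of length $\poly(n)$; since the input $n$ has bit-length $O(\log n)$, this is exponential in the input size. Verifying that a given configuration is a valid tiling reduces to local edge checks, doable in time $\poly(n)$, so summing over witnesses places the counting problem in $\SEXP$.

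For hardness, I would first prove the Wang-tile analogue: there is a fixed finite set $\mathcal{W}$ of Wang tiles such that counting tilings of $[n\times n]$ by $\mathcal{W}$ is $\SEXP$-complete. Pick an $\SEXP$-complete problem, namely the number of accepting paths of a fixed $\NEXP$ machine $M$ on binary inputs of length $m = \Theta(\log n)$. Following the Berger/Robinson paradigm, design $\mathcal{W}$ so that each valid tiling of $[n\times n]$ encodes a space-time diagram of an accepting computation of $M$: one axis indexes time, the other indexes tape cells, and local color-matching rules simulate the transitions of $M$. The binary digits of the input are extracted from the value of $n$ itself by a hierarchical self-similar sublayer (a Robinson-style substitution system) that reads the bits of $n$ off the boundary. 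Parsimony---a bijection between tilings and accepting computations---is enforced by the local determinism of the simulation tiles, so the number of tilings equals the $\SEXP$-complete target count.

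Next, I would convert the Wang construction into polyominoes. Replace each Wang tile by a unit square decorated with bumps and notches whose shapes encode its four edge colors, so that two polyomino tiles fit along a common edge if and only if the corresponding Wang colors match. This is the standard jigsaw reduction and it is parsimonious; the overall square becomes $[cn\times cn]$ for some absolute constant $c$ depending on the notch alphabet, a harmless rescaling that can be absorbed into the statement. Since only translations are permitted, each jigsaw tile must be used in a fixed orientation, which agrees with how Wang tiles are used.

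The main obstacle I anticipate is the boundary encoding: squeezing an $\SEXP$ computation into an $[n\times n]$ grid while keeping $\bT$ independent of $n$ forces one to extract the $O(\log n)$ bits of the binary input~$n$ purely from the geometry of the square. This demands a self-similar substitution that remains count-preserving, so each valid input encoding yields exactly one consistent sublayer. Making this substitution layer coexist cleanly with the simulation layer, and then packing everything into a single fixed finite polyomino set that still allows no parasitic tilings inflating the count, is where the delicate combinatorial work lies.
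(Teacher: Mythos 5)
The statement you are proving is Conjecture~\ref{c:tileability-SEXP}: it is an \emph{open problem} in the paper, not a theorem, and the approach you outline is exactly the ``classical idea'' that the paper explains cannot be pushed through. The fatal obstruction is Theorem~\ref{t:LMP-rectangle} (Lam--Miller--Pak): for any fixed finite tile set $\bT$, tileability of $[m\times n]$ is decidable in $O(\log m+\log n)$ time, i.e.\ in time linear in the bit-length of the input. Your reduction is explicitly parsimonious --- the tilings of $[n\times n]$ are in bijection with the accepting computations of a fixed $\NEXP$ machine $M$ on an input read off from $n$. In particular, a tiling would exist if and only if $M$ accepts, so the \emph{decision} problem ``is $[n\times n]$ tileable by $\bT$?'' would be $\NEXP$-hard. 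That contradicts the linear-time algorithm of Theorem~\ref{t:LMP-rectangle} unconditionally (by the nondeterministic time hierarchy theorem, $\NEXP$-hard problems are not solvable in polynomial time in the input length). So no parsimonious Wang-tile/Robinson-style simulation can work here; any proof of the conjecture must produce a tile set whose rectangles are always easy to tile but whose tilings are hard to \emph{count}, which is precisely why the paper flags that ``a different approach is required.''

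A secondary but related gap: the Berger--Robinson hierarchical machinery you invoke to extract the bits of $n$ from the geometry of the square relies either on the tile set being part of the input (as in Levin's reduction, which the paper notes does work in that regime) or on aperiodic structure in the infinite plane. For a \emph{fixed} $\bT$ and a plain rectangle with free boundary, Barnes's algebraic theory underlying Theorem~\ref{t:LMP-rectangle} shows the set of tileable rectangles is eventually governed by finitely many congruence and size conditions, so the self-similar ``input-reading'' sublayer you describe cannot be enforced. The membership part of your argument (the counting problem is in $\SEXP$ once membership is checkable in $\poly(n)$ time) is fine and matches $\S$\ref{ss:formula-comp}, but the hardness half, as designed, provably fails.
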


In fact, until we started writing this survey, we always believed this
result to be known, only to realize that standard references such as~\cite{Boas}
fall a bit short.  Roughly, one needs to embed a $\SEXP$-complete language
into a counting tilings problem of a rectangle.  This is a
classical idea (see e.g.~\cite[$\S$5.3.4,~$\S$7.6.5]{MM}), which worked
well for many related problems. For example, the
\textsc{Rectangular Tileability} problem asks: given a finite set
of tiles $\bT$, do there exist integers $m$ and~$n$, such that
$\bT$ tiles $[m\times n]$?

\begin{thm}[Yang~\cite{Yang}]  \label{t:yang-rectangle}
The \textsc{Rectangular Tileability} problem is undecidable.
\end{thm}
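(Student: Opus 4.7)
The plan is to reduce from the Halting Problem for Turing machines on empty input, which is classically undecidable. Given a TM $M$, I want to construct a finite polyomino tile set $\bT = \bT(M)$ such that $\bT$ tiles some rectangle $[m \times n]$ if and only if $M$ halts on empty input. This parallels Berger's original undecidability argument for Wang's plane-tiling problem, but with boundary tiles that force a \emph{finite} rectangular shape keyed to the length of an accepting computation.

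First, I would encode the transitions of $M$ as a set of unit-square tiles in the Wang style: edges carry symbols from a finite alphabet, where each interior tile corresponds either to ``this cell is not under the head'' (propagating the tape symbol upward unchanged) or to ``this cell is under the head in state $q$ reading symbol $a$'' (propagating the appropriate $\delta(q,a)$ transition to the neighbors). Horizontally adjacent tiles in one row then represent the tape at one time step, and consecutive rows represent consecutive computation steps. Next, I would introduce four families of boundary tiles -- bottom, top, left, right -- with distinguishing outer shapes that can only appear on the respective sides of a rectangle: the bottom-row tiles encode the initial configuration (head at the leftmost cell in the start state, blanks elsewhere); the left and right boundary tiles carry ``blank'' symbols and can be repeated to any height and any width; and the top boundary tiles exist only for edge-symbols that mark a halting state. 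Finally, a routine bump-and-notch transformation converts the colored Wang edges into polyomino boundaries, so that edge-matching becomes geometric interlocking; since rotations and reflections are disallowed, this transformation is straightforward.

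Correctness then follows in two directions. If $M$ halts in $T$ steps using at most $S$ cells of tape, stacking the $T$ configurations of $M$ bottom-to-top, framing with left/right blanks, and capping with the top boundary tiles yields an explicit tiling of a rectangle of dimensions roughly $(S+2)\times(T+2)$. Conversely, suppose $\bT$ tiles some $[m\times n]$ rectangle. The boundary families, being the only tiles whose outer shapes fit the border of the rectangle, must occupy the boundary; the interior is then forced row by row to be the unique computation trace of $M$ starting from the initial configuration encoded on the bottom; and the mere existence of a top row certifies that this trace reaches a halting state. Hence tileability is equivalent to halting, and undecidability transfers.

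The main obstacle will be engineering the tile set so that no \emph{spurious} tiling exists -- that is, so that no arrangement other than a genuine halting trace of $M$ can assemble into a rectangle. In particular, one must prevent tricks such as tiling a rectangle entirely with boundary tiles, splicing two partial computations together, or starting the simulated computation in the middle of a row. The bumps, notches, and auxiliary edge symbols must be designed with enough rigidity -- for instance by assigning disjoint shape alphabets to boundary versus interior tiles, and by having the bottom boundary tiles uniquely fix the leftmost-head start position -- that these degenerate possibilities are excluded. Once these geometric details are settled, the reduction from the Halting Problem is immediate.
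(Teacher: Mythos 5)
Your strategy coincides with the one the paper attributes to Yang: the survey gives no proof of this theorem, it only cites \cite{Yang} and records that the argument embeds the \textsc{Halting Problem} into \textsc{Rectangular Tileability}, which is exactly the reduction you outline. The forward direction of your sketch (a halting computation of length $T$ on $S$ cells yields a tiled $(S+2)\times(T+2)$ rectangle) is unproblematic, and the overall architecture -- Wang-style computation tiles, border families, bump-and-notch conversion exploiting the translations-only convention -- is the standard and correct skeleton.

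The gap is that your closing paragraph defers precisely the mathematical content of the theorem. Because a polyomino tiling of a rectangle has a geometrically featureless boundary (all four sides are straight, and no colors can be imposed there), the entire burden falls on proving \emph{rigidity}: that every tiling of every rectangle, of any dimensions, contains a faithful halting trace. Saying that boundary and interior tiles have ``disjoint shape alphabets'' does not settle this, since a tile with a flat outer edge can a priori sit in the interior against another flat edge, so you must separately exclude (a) rectangles tiled entirely by border and padding tiles with no computation present, (b) simulated runs whose bottom row does not encode the genuine initial configuration, and (c) rows that do not follow $\delta$. It is worth noting which spurious tilings are actually dangerous: if the top-cap tiles exist only for halting edge-symbols, then splicing two computation strips side by side, or capping one computation and restarting another above it, still certifies halting and does not hurt soundness -- only (a)--(c) do. Those three exclusions are exactly what occupies Yang's paper, so ``once these geometric details are settled, the reduction is immediate'' is true but vacuous: settling them \emph{is} the proof. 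As written, your proposal is a correct plan rather than a proof.
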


In the proof, Yang embeds the \textsc{Halting Problem} into
\textsc{Rectangular Tileability}.  So can one embed a $\NEXP$-complete
problem into tileability of an \ts $[m\times n]$ \ts rectangle?  The answer is
\emph{yes} if $\bT$ is allowed to be part of the input.  In fact, even
Levin's original 1973 paper introducing \NP-completeness proposed
this approach~\cite{Lev}.  The following result should come as a
surprise, perhaps.

\begin{thm}[Lam--Miller--Pak, see~\cite{Lam}]  \label{t:LMP-rectangle}
Given $\bT$, the tileability of $[m\times n]$ can be decided in
$O(\log m + \log n)$ time.
\end{thm}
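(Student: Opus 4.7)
The plan is to reduce the problem to showing that the set
\[
T_\bT \, := \, \{(m,n) \in \nn^2 \.:\. [m \times n] \text{ is tileable by } \bT\}
\]
is \emph{semilinear}, i.e.\ a finite union of two-dimensional arithmetic progressions $(m_0 + a\ts\nn) \times (n_0 + b\ts\nn)$, with the description depending only on the fixed tile set~$\bT$. Given such a description (precomputed once, with cost absorbed into the $O$-constant), deciding $(m,n)\in T_\bT$ reduces to a constant number of divisibility tests on $m$ and $n$, each costing $O(\log m + \log n)$ bit operations on binary inputs.

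The first step is to observe that $T_\bT$ has a two-sided semigroup structure: if $(m,n_1),(m,n_2)\in T_\bT$, then $(m, n_1+n_2)\in T_\bT$ by horizontally concatenating the two tilings, and symmetrically in the other coordinate. Consequently, for each fixed~$m$ the slice $S_m := \{n : (m,n)\in T_\bT\}\cup\{0\}$ is a numerical sub-semigroup of $\nn$, so by the Sylvester--Frobenius theorem $S_m$ contains all sufficiently large multiples of $d_m := \gcd(S_m)$, with only finitely many residues modulo $d_m$ missing. An analogous statement holds for each fixed~$n$. This already yields the desired bound \emph{for each fixed row or column}, but not yet uniformly.

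The key technical step is to upgrade these one-dimensional periodicities to a joint two-dimensional periodicity: to produce integers $a,b,M_0$ depending only on~$\bT$ such that membership of $(m,n)$ in $T_\bT$ with $m,n \ge M_0$ depends solely on the residues $(m\bmod a,\ts n\bmod b)$. I would attack this via the transfer-matrix framework, scanning the rectangle column by column while tracking the profile of tile fragments protruding into the unfilled region. For fixed~$m$ the profile alphabet is finite, making tileability of $[m\times n]$ equivalent to length-$n$ reachability in a finite digraph, which by pigeonhole is eventually periodic in~$n$ with some period~$a_m$. The hard part is showing that $a_m$ and the associated thresholds can be bounded \emph{uniformly in~$m$}, since the state spaces for different column heights $m$ live in a priori incomparable universes. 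I expect this uniformity to follow by invoking the row-direction semigroup closure in tandem with the column-direction periodicity, trading off between the two scanning directions until the remaining finitely many residue classes stabilize. Once joint periodicity is established, the semilinear description of $T_\bT$ and hence the $O(\log m + \log n)$ decision algorithm follow immediately.
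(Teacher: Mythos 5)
Your reduction to a semilinear description of $T_\bT$ is the right target, and the two easy ingredients you establish are both correct: closure of $T_\bT$ under addition in each coordinate by concatenating tilings, and eventual periodicity of each slice $S_m$ (via numerical semigroups, or equivalently via the transfer-matrix digraph for fixed~$m$). But the entire content of the theorem is the step you explicitly leave open: producing $a,b,M_0$ depending only on $\bT$ so that the periodic behaviour is \emph{uniform} over all $m$ and $n$. The transfer-matrix route gives, for each $m$, a period $a_m$ and a threshold $N_m$ bounded only by the size of the profile digraph, which grows exponentially with $m$; nothing in your argument prevents $a_m$ or $N_m$ from growing without bound, and ``trading off between the two scanning directions until the residue classes stabilize'' is not an argument but a restatement of what must be proved. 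As written, the proposal only shows that for each \emph{fixed} $m$ the map $n \mapsto [(m,n)\in T_\bT]$ is decidable in $O(\log n)$ time, which is strictly weaker than the joint statement.

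The cited proof obtains exactly this uniformity \emph{nonconstructively}: it encodes tilings via Barnes's algebraic (polynomial-ring) formulation and invokes Hilbert's Basis Theorem, so that an ascending chain condition forces the family of periodicity data indexed by $m$ to stabilize without saying where. This nonconstructivity is essential, not an artifact: combined with Theorem~\ref{t:yang-rectangle} (undecidability of \textsc{Rectangular Tileability}), the constants hidden in the $O(\cdot)$ are \emph{not computable} as functions of~$\bT$. So any version of your plan that attempts to exhibit the stabilization by an explicit finite computation from $\bT$ is doomed in principle, and even a purely existential version needs a genuine Noetherianity/ACC argument in place of the hoped-for stabilization. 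That replacement is the missing idea, and it is precisely what the Lam--Miller--Pak proof supplies.
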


The proof is nonconstructive; it is based on \emph{Hilbert's Basis Theorem}
and the algebraic approach by F.~W.~Barnes.
A combination of theorems~\ref{t:yang-rectangle} and~\ref{t:LMP-rectangle}
shows that the constant implied by the $O(\cdot)$ notation is not
computable as a function of~$\bT$.  Roughly, we do know that a linear-time
algorithm exists, but given $\bT$ it is undecidable to find it.
Theorem~\ref{t:LMP-rectangle}
also explains why Conjecture~\ref{c:tileability-SEXP} remains open -- most
counting results in the area use parsimonious reductions (think bijections
between solutions of two problems), and in this case a different approach
is required.

\medskip

\section{Classes of combinatorial sequences}\label{ss:classes}

\subsection{Algebraic and D-algebraic approach}\label{ss:formula-classes}
Combinatorial sequences $\{a_n\}$ are traditionally classified depending
on the algebraic properties of their~GFs
$$A(t) \, = \, \sum_{n=0}^\infty \. a_n \ts t^n.
$$
We list here only four major classes:

\medskip

\nin
\qquad {\it \textbf{Rational}}: \qquad $A(t) =P(t)/Q(t)$, \ for some \. $P,\ts Q\in \zz[t]$,

\smallskip

\nin
\qquad {\it \textbf{Algebraic}}: \ \quad \.\ts $c_0 A^k \ts + \ts c_1 A^{k-1}\ts+\ts\ldots
\ts+ \ts c_k=0$, \ for some \, $k\in \nn$, \. $c_i\in \zz[t]$,

\smallskip

\nin
\qquad {\it \textbf{D-finite}}: \quad \quad \.\ts $c_0A \ts + \ts c_1A^\prime\ts + \ts\ldots\ts + \ts c_kA^{(k)}=b$,
\ for some \, $k\in \nn$, \. $b,\ts c_i\in \zz[t]$,

\smallskip

\nin
\qquad {\it \textbf{D-algebraic}}:  \ \. $Q\bigl(t,A,A,\ldots,A^{(k)}\bigr)=0$, \ for some \, $k\in \nn$, \. $Q\in \zz[t,x_0,x_1,\ldots,x_k]$.

\medskip

\nin
Here we exclude the trivial equation $0=0$.  \ts 
Note that rational~GFs are exactly those $\{a_n\}$ that satisfy a linear recurrence:
$$c_0 \ts a_n \, = \, c_1 \ts a_{n-1} \. + \. \ldots \. + \. c_k \ts a_{n-k}, \ \, \text{for some} \
k\in \nn, \. c_i\in \zz.
$$
Such sequences $\{a_n\}$ are called \emph{C-recursive} (or \emph{linearly recursive}).
For example, Fibonacci numbers satisfy~\eqref{eq:Fib1} and have GF \ts $(1-t-t^2)^{-1}$.
Similarly, \emph{Catalan numbers} have an algebraic GF by~\eqref{eq:formula-C}.   D-finite~GFs
(also called \emph{holonomic}) are exactly those $\{a_n\}$ that
satisfy polynomial recurrence
$$c_0(n) \ts a_n \, = \, c_1(n) \ts a_{n-1} \. + \. \ldots \. + \. c_k(n) \ts a_{n-k}, \ \, \text{for some} \
k\in \nn, c_i\in \zz[n]\ts.
$$
Such sequences $\{a_n\}$ are called \emph{P-recursive}.
Examples include $\{n!\}$, derangement numbers $\{D_n\}$ by~\eqref{eq:formula-Derangements},
the number of 3-regular graphs by~\eqref{eq:3-regular}, and the numbers $\{r_n\}$ of
involutions in~$S_n$, which satisfy \ts $r_{n} = r_{n-1} + (n-1)r_{n-2}$,
see~\cite[\href{https://oeis.org/A000085}{A000085}]{OEIS}.
Finally, {\it D-algebraic} GFs
(also called \emph{ADE} and \emph{hyperalgebraic})
include Euler numbers by equation~\eqref{eq:formula-E} and the 
number of partitions~$p(n)$, see below.

\begin{thm}[{see e.g.~\cite[Ch.~6]{Stanley-EC2}}]
$$\text{Rational \, $\ssu$ \, Algebraic \, $\ssu$ \, D-finite \, $\ssu$ \, D-algebraic. \hskip5.cm}
$$
\end{thm}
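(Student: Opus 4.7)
The plan is to establish the three inclusions in sequence. Two of them, Rational $\ssu$ Algebraic and D-finite $\ssu$ D-algebraic, are essentially tautological after unpacking the definitions; the only real content is in the middle inclusion Algebraic $\ssu$ D-finite, which I expect to be the main obstacle.

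For Rational $\ssu$ Algebraic, I would simply observe that $A = P/Q$ rearranges to $Q \cdot A - P = 0$, a nontrivial algebraic equation of degree $k=1$ in $A$ with coefficients in $\zz[t]$. For D-finite $\ssu$ D-algebraic, the linear ODE $c_0 A + c_1 A' + \ldots + c_k A^{(k)} = b$ is already a polynomial relation in $A, A', \ldots, A^{(k)}$: explicitly, take $Q(t, x_0, \ldots, x_k) = c_0 x_0 + \cdots + c_k x_k - b$, which is nonzero as long as the original linear ODE is nontrivial.

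The substantive step is Algebraic $\ssu$ D-finite, which I would prove by the classical implicit-differentiation-plus-dimension-count argument. Assume $A$ satisfies a nontrivial polynomial equation $P(t, A) = 0$ with $P \in \zz[t, x]$; after factoring, I may assume $P$ is irreducible over $\qqq(t)[x]$ and has positive $x$-degree $d$. Then $L := \qqq(t)[x]/(P)$ is a $d$-dimensional vector space over $K := \qqq(t)$, and $A$ is identified with the image of $x$ in $L$. Implicit differentiation of $P(t, A) = 0$ gives $A' = -P_t(t,A)/P_x(t,A)$, where $P_x(t, A) \ne 0$ by separability in characteristic zero. Thus $A'$ lies in $L$, and iterating, every derivative $A^{(j)}$ lies in $L$. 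Since $L$ has dimension $d$ over $K$, the $d+1$ elements $A, A', \ldots, A^{(d)}$ must be $K$-linearly dependent, and clearing denominators in any nontrivial such dependence produces a nontrivial D-finite equation with coefficients in $\zz[t]$.

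The main obstacle is really the bookkeeping in this last step: one must verify that the abstract algebra in the extension $L$ faithfully reflects the analytic differentiation of the power series $A(t)$, which requires $A$ to be a simple root so that $P_x(t,A)$ is a unit in $L$ (justified by irreducibility in characteristic zero), and that the $K$-linear dependence found in $L$ transports back to a nontrivial equation in the power-series ring. Both are standard, and together they close the chain of inclusions.
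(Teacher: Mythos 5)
Your proposal is correct and follows exactly the classical argument the paper points to (Stanley, \emph{EC2}, Ch.~6, essentially Comtet's proof): the two outer inclusions are definitional, and Algebraic $\ssu$ D-finite follows because all derivatives of $A$ lie in the $d$-dimensional $\qqq(t)$-vector space $\qqq(t)[x]/(P)$, forcing a linear dependence among $A, A',\ldots,A^{(d)}$. The separability/irreducibility point you flag (so that $P_x(t,A)$ is invertible) is the right thing to check and is handled just as you say.
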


\nin
Here only the inclusion \ts {\it Algebraic  $\ssu$  D-finite} \ts
is nontrivial.  The following easy observation explains
the connection to the subject (we omit the proof).

\begin{prop}  \label{p:Wilf}
Sequences with \ts {\em D-algebraic} GFs have Wilfian formulas of type~\WI.
\end{prop}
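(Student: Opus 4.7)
My plan is to extract from the defining algebraic differential equation a recursion that determines $a_{m+k}$ from $a_0, \ldots, a_{m+k-1}$, and then verify that this recursion can be iterated in \poly$(n)$ bit-operations.

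Writing $A(t) = \sum a_n t^n$ and expanding each monomial $c\ts t^e (A)^{e_0}(A')^{e_1}\cdots (A^{(k)})^{e_k}$ of $Q$, one sees that $[t^m]\ts Q(t, A, \ldots, A^{(k)})$ is a polynomial in the $a_i$, obtained as a convolution of the coefficient sequences of the derivatives $A^{(j)}(t) = \sum_i \tfrac{(i+j)!}{i!}\. a_{i+j}\. t^i$, with weights that are themselves polynomials in $m$. Setting $[t^m]\ts Q = 0$ therefore produces, for every $m\ge 0$, a polynomial equation
$$
E_m(a_0, a_1, \ldots, a_{m+k}) \, = \, 0.
$$
To isolate the highest-index unknown $a_{m+k}$, I would note that the monomials of $Q$ contributing a term involving $a_{m+k}$ to $[t^m]\ts Q$ are precisely those with $e=0$ and $e_k\ge 1$: one copy of $A^{(k)}$ supplies the coefficient $\tfrac{(m+k)!}{m!}a_{m+k}$, and every other factor contributes its $t^0$-coefficient. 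A direct bookkeeping gives
$$
\bigl[a_{m+k}\bigr]\. E_m \, = \, \frac{(m+k)!}{m!}\cdot P(a_0, a_1, \ldots, a_k)\ts,
$$
where $P\in\zz[x_0,\ldots,x_k]$ is a fixed polynomial depending only on $Q$.

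Provided $P$ does not vanish on the initial segment $(a_0, \ldots, a_k)$ of the sequence, this leading coefficient is nonzero for all $m$ large enough, so $E_m=0$ determines $a_{m+k}$ uniquely as a rational expression in $a_0,\ldots,a_{m+k-1}$. Seed values for the recursion come from the first few equations $E_0, E_1, \ldots$ (or from known initial values of the combinatorial sequence). Each recursive step costs $O(m)$ arithmetic operations on rationals whose numerators and denominators have bit-length \poly$(m)$, so the full table $a_0, a_1, \ldots, a_n$ is computed in \poly$(n)$ bit-operations.

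The one point requiring care is the degenerate case in which $P$ vanishes on the initial segment of the sequence. I would handle it by replacing $Q$ with a suitable element of the differential ideal it generates --- differentiating the relation once or twice and taking algebraic combinations to kill the obstruction --- to obtain a new algebraic differential equation for $A$ in which the coefficient of the highest-order unknown does not degenerate. Since this preprocessing depends only on $Q$ and not on $n$, it is performed once and absorbed into the constants of the algorithm, leaving a uniform \poly$(n)$-time procedure. This degenerate case is the only real obstacle; everything else is straightforward bookkeeping, which is presumably why the author chose to omit the proof.
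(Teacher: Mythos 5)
The paper gives no proof of this proposition (it says ``we omit the proof''), so the only issue is whether your argument is sound. Your nondegenerate case is correct and is the standard argument: the coefficient of $a_{m+k}$ in $E_m=[t^m]\ts Q(t,A,\dots,A^{(k)})$ is indeed $\frac{(m+k)!}{m!}\cdot[t^0]\bigl(\partial Q/\partial x_k\bigr)(t,A,\dots,A^{(k)})$, so when that constant term $P(a_0,\dots,a_k)$ is nonzero the recursion runs for \emph{every} $m\ge 0$ and polynomial time follows. (Two small caveats you should state: the initial values $a_0,\dots,a_{k-1}$ cannot be extracted from $E_0,E_1,\dots$ --- an ADE has a family of solutions --- so they are part of the input; and the bit-length bound on the $a_m$ comes from the sequence being combinatorial, i.e.\ of at most exponential growth, not from the recursion itself, whose naive bound lets bit-lengths multiply by $\deg Q$ at each step.)

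The genuine gap is the degenerate case, and your proposed repair does not work. Formal differentiation sends $Q$ to $Q_t+\sum_j Q_{x_j}x_{j+1}$, whose separant with respect to the new top variable is again $Q_{x_k}$; so differentiating ``once or twice'' never changes the valuation of the separant at the solution. Nor can one always find an equation with your nondegeneracy property: for $Q=t\ts x_1-x_0$ the solutions are $A=ct$, the coefficient $a_1=c$ is a free parameter, so no consequence of $Q$ can compute $a_1$ from $a_0$ --- which is exactly what an equation with nonvanishing leading coefficient for all $m$ would do. The degenerate case actually splits in two. If $Q_{x_k}(t,A,\dots,A^{(k)})$ vanishes \emph{identically}, replace $Q$ by an annihilator of $A$ of minimal order and then minimal degree in the top variable; its separant is then a nonzero power series, of some valuation $v\ge0$. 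If $v>0$, the right move is not to hunt for a better equation but to linearize $Q$ around a known truncation of $A$: the valuations $v_j$ of the series $Q_{x_j}(t,A,\dots,A^{(k)})$ determine a shift $\mu=\min_j(v_j-j)$ and a nonzero indicial polynomial $\chi(m)$, and $[t^{m+\mu}]\ts Q=0$ expresses $\chi(m)\ts a_m$ in terms of $a_0,\dots,a_{m-1}$. This determines $a_m$ only for $m$ beyond the largest nonnegative integer root of $\chi$; the finitely many remaining coefficients must be supplied as additional initial data (a constant amount, depending only on $Q$ and the sequence, so polynomial time is unaffected). Since this case is essentially the entire content of the proof, the sketch as written is not repairable by the preprocessing you describe.
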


In other words, if one wants to show that a sequence does not have a Wilfian
formula, then proving that it is {\it D-transcendental}, i.e.\ non-D-algebraic,
is a good start.\footnote{To simplify
exposition and for the lack of better terminology, here and in the future
we refer to sequences by the properties of their GFs.}
Unfortunately, even proving that a sequence is non-P-recursive
is often challenging (see below).

\begin{ex}[Bell numbers]{\rm  Let $B_n$ denote the number of
\emph{set partitions} of $\{1,\ldots,n\}$,
see~\cite{Stanley-EC2} and~\cite[\href{https://oeis.org/A000110}{A000110}]{OEIS}.
Let
$$y(t) \, =\, \sum_{n=0}^\infty \. \frac{B_n\ts t^n}{n!}\,, \qquad
z(t) \, =\, \sum_{n=0}^\infty \. B_n\ts t^n\.,
$$
be the exponential and ordinary GFs of Bell numbers, respectively.  On the one hand,
we have:
$$
y(t) \. = \. e^{e^t-1}, \qquad y'' \ts y \. - \. (y')^2 \. - \.  y'\ts y\.= \. 0\ts.
$$
Thus, $y(t)$ is D-algebraic, and the proposition implies that $\{B_n\}$  can be
computed in \poly$(n)$ time.  On the other hand, $z(t)$ is D-transcendental by
Klazar's theorem~\cite{Kla}.

This also implies that $y(t)$ is not D-finite.  Indeed, observe by definition,
that if a sequence $\{a_n\}$ is P-recursive, then so is $\{n!\ts a_n\}$,
which implies the result by taking $a_n=B_n/n!$ (cf.~\cite{LR}).  Of course,
there is a more direct way to prove that $y(t)$ is not $D$-finite by
repeated differentiation or via the asymptotics, see below.
This suggests the following advanced generalization of Klazar's theorem.

\begin{op}[Pak--Yeliussizov]
Suppose $\{a_n/n!\}$ is D-algebraic but not P-recursive.  Does this imply
that $\{a_n\}$ is D-transcendental?
\end{op}
}
\end{ex}

\smallskip

Before we proceed to more combinatorial examples, let us mention that
D-transcendental GFs are the subject of \emph{Differential Galois Theory},
which goes back to Liouville, Lie, Picard and Vessiot in the 19th century
(see e.g.~\cite{Ritt}), and continues to be developed~\cite{Put-S}.  Some
natural GFs are known to be D-transcendental, e.g.\ $\Ga(z)$, $\zeta(z)$, etc.,
but there are too few methods to prove this in most cases of interest.
Here are some of our favorite open problems along these lines,
unapproachable with existing tools.

\begin{conj}  \label{c:prime-trans}
$\sum_{n\ge 1} \ts p_n \ts t^n$ \ts and \ts $\sum_{n\ge 1} \ts \pi(n) \ts t^n$ \ts
are  D-transcendental.
\end{conj}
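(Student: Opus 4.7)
The author explicitly flags this as unapproachable with existing tools, so the honest task is to sketch the most natural attack and identify precisely where it stalls.

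The plan is to reduce the problem to D-transcendence of the prime indicator series \. $\chi(t) = \sum_{p \text{ prime}} t^p$, which packages the data of both sequences in a cleaner form. First, observe that if \ts $F(t)=\sum_{n\ge 1} \pi(n)\ts t^n$ \ts is D-algebraic, then so is \ts $(1-t)F(t)$, since the class of D-algebraic series is a differential ring closed under multiplication by polynomials. A direct expansion gives \ts $(1-t)F(t) = \sum_{n\ge 1} \bigl(\pi(n)-\pi(n-1)\bigr)\ts t^n = \chi(t)$, so D-algebraicity of $F$ would force D-algebraicity of $\chi$. For the companion series $G(t)=\sum_{n\ge 1}p_n\ts t^n$, I would try to invert the relationship: $\chi$ has its $k$-th nonzero coefficient at position $p_k$, and tools from the Differential Galois theory of composition (\`a la Ritt) give criteria under which a ``support sequence'' inherits D-algebraicity from the series. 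If such a transfer works, both halves of the conjecture reduce to showing $\chi(t)$ is D-transcendental.

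Second, I would attack D-transcendence of $\chi(t)$ by attempting to generalize Klazar's argument for Bell numbers. Klazar's method rests on growth/regularity bounds: a D-algebraic power series with integer coefficients cannot have coefficients that are simultaneously $\{0,1\}$-valued and sufficiently irregular in their support. The known quantitative irregularity of the primes --- for instance, prime gaps satisfying \ts $p_{n+1}-p_n = O(p_n^{0.525})$ \ts (Baker--Harman--Pintz) combined with Maier's theorem on oscillations in short intervals --- should in principle be incompatible with the rigid analytic structure forced by a nontrivial algebraic differential equation. Concretely, one would aim to show that any $\chi \in \zz[[t]]$ with $\{0,1\}$ coefficients satisfying a polynomial differential equation of order $k$ must have a support whose counting function either grows polynomially or differs from a P-recursive pattern by a bounded amount, neither of which applies to primes by PNT.

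Third, as a parallel route, I would try asymptotic obstruction via the Flajolet--Gerhold--Salvy program: D-finite sequences have asymptotics of the form \ts $C\ts n^{\alpha} \rho^n \bigl(1+o(1)\bigr)$, which is incompatible with \ts $p_n\sim n\log n$ and \ts $\pi(n)\sim n/\log n$. This gives the D-finite case of the conjecture essentially for free, so the real content lies entirely on the step from D-finite to D-algebraic.

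\textbf{Main obstacle.} The step from D-finite to D-algebraic is exactly where every known technique collapses. Asymptotic classification for D-algebraic sequences is essentially absent: such sequences can grow like doubly exponentials, admit logarithmic and iterated-logarithmic corrections, and encode very subtle arithmetic (cf.\ the Bell-number example in the excerpt, where even $y(t)=e^{e^t-1}$ is D-algebraic). Thus the key technical problem --- and the one I would expect to block the whole program --- is to prove a ``thin support'' theorem: any $\{0,1\}$-valued D-algebraic series over $\zz$ whose support has density zero and exhibits PNT-type oscillations must be identically rational or algebraic. Without such a theorem in the differential-algebraic setting, neither the reduction nor the asymptotic approach yields a contradiction, which is why the conjecture remains open.
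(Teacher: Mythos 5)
This statement is labeled a conjecture in the paper and is explicitly described there as ``unapproachable with existing tools''; the paper offers no proof, so there is nothing to compare your attempt against line by line. You correctly treat it as open, and the concrete parts of your sketch are sound: the identity \ts $(1-t)\sum_{n\ge1}\pi(n)\ts t^n=\sum_p t^p$ \ts is a valid reduction (D-algebraic series are closed under multiplication by polynomials), and the non-D-finiteness of both series is indeed known from Flajolet--Gerhold--Salvy, which the paper cites. One small imprecision: for \ts $\sum p_n t^n$ \ts the leading asymptotics \ts $p_n\sim n\log n$ \ts alone is \emph{not} incompatible with the P-recursive asymptotic form \ts $K\la^n n^{\al}(\log n)^{\be}$ \ts (take $\la=\al=\be=1$); it is the secondary term \ts $n\log\log n$ \ts that produces the contradiction, which is exactly how the paper's worked example (subtracting $n\ts h_n$ with $h_n$ the harmonic number) proceeds.

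The genuinely speculative steps are the ones you flag yourself, but they deserve sharper warnings. First, there is no known ``transfer principle'' by which D-algebraicity of the indicator series \ts $\chi(t)=\sum_p t^p$ \ts passes to the support sequence \ts $\{p_n\}$ \ts or vice versa; Ritt's theory does not supply such a criterion, so the two halves of the conjecture do not currently reduce to one another. Second, Klazar's proof for the ordinary GF of Bell numbers is not a general ``irregular $\{0,1\}$ support'' obstruction --- it exploits the specific functional equation and the analytic behavior of that particular series near its boundary --- so the hoped-for ``thin support theorem'' for D-algebraic series is itself an open problem at least as hard as the conjecture (note that Jacobi's theta function \ts $\sum_n t^{n^2}$ \ts is a $\{0,1\}$-coefficient D-algebraic series with density-zero support, so any such theorem must be delicately calibrated). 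Your diagnosis of where the program stalls is accurate; just be aware that both of your proposed bridges, not only the final one, are missing.
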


Here $p_n$ is the $n$-th prime, and $\pi(n)$ is the number of primes $\le n$,
as above.  Both GFs are known to be non-D-finite, as shown by
Flajolet, Gerhold and Salvy in~\cite{FGS} by asymptotic
arguments.  The authors quip: ``{\em Almost anything
is non-holonomic unless it is holonomic by design}''.
Well, maybe so. But the same applies for D-transcendence
where the gap between what we believe and what we can prove
is much wider.  The reader should think of such open problems as
the irrationality of $e+\pi$ and $\zeta(5)$, and imagine a similar
phenomenon in this case (cf.~\cite{KoZ}).
\begin{conj}
\ts ${\displaystyle{\sum_{n\ge 0} \ts \ts t^{n^3}}}$ \ts
is  D-transcendental.
\end{conj}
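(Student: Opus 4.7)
The plan is to attempt a proof by contradiction, assuming that $f(t) = \sum_{n\ge 0} t^{n^3}$ satisfies an algebraic differential equation $Q(t, f, f', \ldots, f^{(k)}) = 0$ for some nonzero $Q \in \zz[t, x_0, \ldots, x_k]$, and deriving a contradiction from the gap structure of the coefficients. The technical route I would pursue combines singularity analysis on the natural boundary with constraints on D-algebraic functions.

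First, I would establish the analytic behavior of $f$ near $|t|=1$. By Fabry's gap theorem, the unit circle is a natural boundary of $f$. A Laplace/saddle-point computation along the real axis gives $f(e^{-s}) \sim \Ga(4/3)\ts s^{-1/3}$ as $s \to 0^+$, so $f(t) \sim \Ga(4/3)(1-t)^{-1/3}$ as $t \to 1^-$. More generally, at a primitive $q$-th root of unity $\zeta = e^{2\pi i p/q}$, grouping terms by residues $n \bmod q$ gives $f(t) \sim c_{p,q}(1 - t/\zeta)^{-1/3}$, where $c_{p,q}$ is a constant of Gauss-sum type determined by the distribution of $\{n^3 \bmod q\}$. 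Thus $f$ has countably many singularities densely distributed on the unit circle, each with a $(1-t/\zeta)^{-1/3}$-type leading term but different leading constants.

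Second, I would invoke a Rubel- or Bank-Kaufman-style constraint for D-algebraic functions: the local expansions of $f$ at its singularities must lie in a finite-dimensional "Puiseux-like" space governed by $Q$ (roughly, the set of formal solutions of $Q(z, y, y', \ldots, y^{(k)}) = 0$ at a movable singularity is constrained by differential-algebraic data). Concretely, differentiating the relation $Q=0$ and evaluating asymptotics at each $\zeta$ produces a system of polynomial relations that the constants $c_{p,q}$ must satisfy, uniformly in $q$. The third and decisive step would be to exhibit infinitely many $q$ for which the $c_{p,q}$, interpreted as algebraic numbers or transcendental quantities depending on $q$, violate any such finite polynomial system. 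A natural target is to use that cubic Gauss sums are known to be irregular in a way that rules out uniform algebraic dependence.

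The hard part, and the reason this conjecture is described as unapproachable, is step three: our control of the algebraic relations forced by D-algebraicity on local singular expansions is extremely weak outside the Mahler-method framework, and the map $n \mapsto n^3$ admits no Mahler-type functional equation $f(t^d) = R(t, f(t))$ (unlike $\sum t^{n^2}$, which plugs into theta-function modularity). An alternative attack would be to find such an approximate functional equation, or to adapt Nishioka's algebraic-independence methods to the cubic case, but no appropriate analog of the modular transformation law is known. Barring such a breakthrough, or a new general transcendence criterion for lacunary series along the lines envisioned by Bézivin and Klazar, the conjecture should be regarded as a test case awaiting fundamental progress in differential Galois theory or transcendence theory.
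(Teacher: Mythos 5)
The statement you are addressing is not a theorem of the paper but an open conjecture: the author explicitly lists it among ``open problems along these lines, unapproachable with existing tools,'' and offers no proof. Your write-up is candid about not closing the argument, so there is no dispute about the bottom line; but it is worth pinpointing why the sketched strategy, and not merely its third step, is in trouble.

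The decisive objection is that your steps (1)--(3), taken at face value, would apply verbatim to the theta function \ts $\sum_{n} t^{n^2}$: it too has the unit circle as a natural boundary (Fabry), it too has a dense set of root-of-unity singularities with leading behavior \ts $c_{p,q}\ts(1-t/\zeta)^{-1/2}$ \ts where the $c_{p,q}$ are quadratic Gauss sums exhibiting the same kind of arithmetic irregularity you hope to exploit in the cubic case. Yet by Jacobi's 1848 theorem --- which the paper cites precisely to frame this conjecture --- the theta function \emph{is} D-algebraic. So the existence of a natural boundary with densely distributed, arithmetically varying singular constants imposes no obstruction to D-algebraicity, and the ``Rubel/Bank--Kaufman-style finite-dimensionality constraint'' you invoke in step two does not exist in any form strong enough to run the argument: a single ADE can accommodate uncountably many local singular expansions at a dense boundary set (the movable-singularity phenomenon cuts against you here, not for you). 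Any successful attack must isolate what is special about the exponent $3$ versus $2$ --- i.e., the absence of a modular or Mahler-type transformation law --- and turn that absence into a theorem, which is exactly the part you defer. Your concluding paragraph correctly identifies this as the obstacle; the honest summary is that the proposal is a heuristic reduction of one open problem to another (a transcendence criterion for lacunary series under D-algebraicity) rather than a proof.
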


This problem should be compared with Jacobi's 1848 theorem that
the \emph{theta function}
\ts $\sum_{n}  \ts t^{n^2}$ \ts is D-algebraic~\cite{Jac}.
To understand
the difference, the conjecture is saying that there are no good
formulas governing the number of ways to write $n$ as a sum of
$k$~cubes, for any~$k$, the kind of formulas that exist for
sums of two, four and six squares, see~\cite[$\S$XX]{HW}.

\smallskip

\subsection{Combinatorial and asymptotic tools}
The following is a simple combinatorial criterion for
non-P-recursiveness.

\begin{thm}[\cite{GP-words}] \label{t:mod2}
Let $\{a_n\}$ be a P-recursive integer sequence. Consider the
infinite binary word \ts ${\bold w}=w_1w_2\cdots$ \ts defined by \ts
$w_n= a_n\,\,{\rm mod}\,\, 2$.  Then there exists a finite
binary word~$v$ that is not a subword of~$w$.
\end{thm}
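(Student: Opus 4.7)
The plan is to control the subword complexity of the binary word $\mathbf{w}$ by exhibiting a finite-state structure hidden in the P-recurrence once everything is reduced modulo $2$. Fix a P-recurrence $c_0(n)\ts a_n = \sum_{i=1}^{k} c_i(n)\ts a_{n-i}$ with $c_i\in\mathbb{Z}[n]$. The key elementary observation is that for any $p\in \mathbb{Z}[n]$ one has $p(n)\equiv p(n\bmod 2)\pmod{2}$, so each coefficient $c_i(n)\bmod 2$ depends only on the parity of $n$. In particular, the mod-$2$ recurrence has only two distinct ``flavors,'' indexed by $n\bmod 2$.

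First, I would treat the main case, where $c_0(n)$ is odd for both parities of $n$ (equivalently $c_0(0)\ts c_0(1)$ is odd). Then modulo $2$ one may invert $c_0(n)$, giving
\[
a_n \, \equiv \, c_0(n)^{-1} \sum_{i=1}^{k} c_i(n)\ts a_{n-i} \ \pmod 2.
\]
The augmented state $S_n = (n\bmod 2,\ a_{n-1}\bmod 2,\ \ldots,\ a_{n-k}\bmod 2) \in \{0,1\}^{k+1}$ then evolves under a fixed, deterministic transition, so the trajectory $\{S_n\}$ is eventually periodic with period $P\le 2^{k+1}$. Hence $\mathbf{w}$ itself is eventually periodic, and the number of distinct length-$\ell$ subwords of $\mathbf{w}$ is bounded by a constant independent of $\ell$. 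Choosing any $\ell$ with $2^\ell$ exceeding this constant produces a binary word $v$ of length $\ell$ that is not a subword of $\mathbf{w}$.

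The substantive case is when $c_0(n)\equiv 0\pmod 2$ on some parity class, because then the mod-$2$ recurrence degenerates and fails to determine $a_n\bmod 2$ for such $n$. My approach here is to lift to $\mathbb{Z}/2^s\mathbb{Z}$: the recurrence still holds mod $2^s$, and $c_0(n)$ is a unit mod $2^s$ exactly when $v_2(c_0(n))<s$. Splitting $\mathbb{N}$ according to the residue of $n$ modulo a suitably large power $2^r$, one can arrange, on each ``generic'' residue class, that $c_0(n)$ has bounded $2$-adic valuation, and repeat the finite-state argument above, now in the state space $\bigl(\mathbb{Z}/2^s\mathbb{Z}\bigr)^k$, to conclude eventual periodicity of $\{a_n\bmod 2\}$ along that class, and hence bounded contribution to the subword complexity of $\mathbf{w}$.

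The main obstacle is exactly the ``non-generic'' residue classes, namely those $n$ for which $v_2(c_0(n))$ is unbounded as $n$ grows along the class: already $c_0(n)=n$ has $v_2(c_0(2^m))=m$, so no single choice of $s$ works uniformly. These bad indices form a sparse, tree-like subset of $\mathbb{N}$ indexed by the $2$-adic roots of $c_0$, and they must be handled by an inductive descent along the $2$-adic factorization, shifting to finer arithmetic progressions at each level and absorbing the contribution of each level into the subword-complexity count. Making this recursion quantitative — so that the total number of distinct length-$\ell$ factors of $\mathbf{w}$ remains $o(2^\ell)$ — is the heart of the theorem; once this bound is established, a trivial pigeonhole produces the forbidden word $v$.
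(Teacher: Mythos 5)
Your first case ($c_0(n)$ odd for both parities) is correct, but it is the easy part, and the proposal breaks down exactly where you say it does. Two concrete problems with the second case. First, even on a ``generic'' residue class where $1\le v_2(c_0(n))\le s-1$, the finite-state argument does not close up: to recover $a_n$ from the recurrence you must divide by $c_0(n)$, which costs $v_2(c_0(n))$ bits of $2$-adic precision, so knowing the previous window modulo $2^s$ determines $a_n$ only modulo $2^{s-v_2(c_0(n))}$. There is no well-defined transition on $\bigl(\mathbb{Z}/2^s\mathbb{Z}\bigr)^k$, and eventual periodicity along the class does not follow. (Nor should it: e.g.\ Catalan numbers mod~$2$ are supported on $n=2^m-1$, so the word $\mathbf{w}$ need not be eventually periodic, and any argument must avoid proving too much.) Second, the non-generic classes, which you correctly identify as the heart of the theorem, are left to an unspecified ``inductive descent''; as written this is a plan, not a proof.

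The argument in \cite{GP-words} sidesteps the entire $2$-adic descent by never trying to \emph{determine} $a_n\bmod 2$; it only needs one nontrivial linear \emph{constraint}. Write the recurrence as $\sum_{i=0}^k \tilde c_i(n)\ts a_{n-i}=0$ and divide by $2^V$ where $V=\min_i\min_n v_2\bigl(\tilde c_i(n)\bigr)$ (finite, and the quotients remain integer-valued polynomials). Now some coefficient is odd at some $n_0$, and since an integer-valued polynomial mod~$2$ depends only on $n\bmod 2^t$ once $2^t$ exceeds its degree, the window $(a_{n-k},\ldots,a_n)\bmod 2$ satisfies one fixed nontrivial $\mathbb{F}_2$-linear relation for every large $n\equiv n_0 \pmod{2^t}$. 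A subword of $\mathbf{w}$ of length $L$ therefore satisfies about $(L-k)/2^t$ shifted copies of this relation, which are linearly independent (their leading positions are distinct), so the number of distinct length-$L$ subwords is at most $2^t\cdot 2^{\ts L-(L-k)/2^t+1}+O(1)<2^L$ for $L$ large, and pigeonhole finishes. This replaces your entire analysis of the $2$-adic roots of $c_0$.
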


Here by a \emph{subword} we mean a consecutive subsequence of letters.
For example, the infinite binary word for the sequence of
Fibonacci numbers \ts $\{F_n$~mod~$2\}$ \ts do not
contain~$(111)$, Catalan numbers \ts $\{C_n$~mod~$2\}$ \ts and derangement
numbers \ts $\{D_n$~mod~$2\}$ \ts do not contain~$(11)$, etc. In a different
direction, this implies that the \emph{binary Champernowne sequence}
(all natural numbers in binary, concatenated)
$$
\text{\bf \textcolor{blue}{0}, \. 1, \. \textcolor{blue}{0, \. 0}, \. 0, \. 1, \.
\textcolor{blue}{1, \. 0}, \. 1, \. 1, \. \textcolor{blue}{0, \. 0, \. 0}, \.
0, \. 0, \. 1, \. \textcolor{blue}{0, \. 1, \. 0}, \. 0, \. 1, \. 1, \ldots }
$$
is not P-recursive,
see~\cite[\href{https://oeis.org/A076478}{A076478}]{OEIS}.

Unfortunately, for many natural non-P-recursive sequences the assumption
in the theorem is much too strong.  For example, for the parity of partition numbers
\ts $\bigl\{p(n)$~mod~$2\bigr\}$, and for the odd primes modulo~4, \ts
$\bigl\{(p_n-1)/2$~mod~$2\bigr\}$, it is an open problem whether
all binary subwords appear (bet on yes).

\smallskip

The following  result is the best tool we have for proving
that a combinatorial sequence is not P-recursive.  Note that
deriving such asymptotics can be very difficult; we refer
to~\cite{FS,PW} for recent comprehensive monographs on the subject.

\begin{thm}\label{t:asy}
Let $\{a_n\}$ be a P-recursive sequence, s.t. $a_n \in \qqq$, \ts
$C_1^n < a_n < C_2^n$ for some $C_2>C_1>0$ and all $n \ge 1$.  Then
$$
a_n \, \sim \, \sum_{i=1}^m \. K_i \. \la_i^n \. n^{\al_i} \. (\log n)^{\be_i}\,,
$$
where \ts $K_i \in \rr_+$, \ts $\la_i \in\overline{\qqq}$, \ts $\al_i \in \qqq$,
and \ts $\be_i \in \nn$.
\end{thm}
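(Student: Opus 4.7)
\textbf{The plan} is to convert the hypothesis about the sequence into a statement about the generating function $A(t) = \sum_n a_n\ts t^n$ and then combine the classical singularity analysis of D-finite functions with the Birkhoff--Trjitzinsky theory of linear recurrences with polynomial coefficients. Since $\{a_n\}$ is P-recursive, $A(t)$ satisfies a linear ODE
\[
p_k(t)\ts A^{(k)}(t) \. + \. \ldots \. + \. p_0(t)\ts A(t) \, = \, 0
\]
with $p_j \in \zz[t]$, so the singularities of the analytic continuation of $A$ lie in the finite algebraic set $\{\rho : p_k(\rho)=0\}$. The two-sided bound $C_1^n < a_n < C_2^n$ forces the radius of convergence to be a finite positive number $R \in [1/C_2, 1/C_1]$. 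Let $\rho_1,\ldots,\rho_m$ be those roots of $p_k$ with $|\rho_i|=R$ at which $A$ is genuinely singular; setting $\lambda_i := 1/\rho_i \in \overline{\qqq}$ produces the exponential growth rates appearing in the statement.

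\textbf{Local structure and transfer.} Next I would analyze the local behavior of $A$ at each $\rho_i$. The exponential upper bound $a_n < C_2^n$ is crucial: by Birkhoff--Wasow theory, an irregular singular point on $|t|=R$ would contribute local factors $\exp\!\bigl(Q((t-\rho)^{-1/p})\bigr)$ for some nontrivial polynomial $Q$, and a Gevrey/saddle-point analysis on the recurrence would then force $a_n$ to exhibit factorial growth, contradicting the upper bound. Hence every dominant $\rho_i$ is a regular singular point, and Fuchs--Frobenius theory supplies a local basis of solutions of the form $(t-\rho_i)^{\alpha}\bigl(\log(t-\rho_i)\bigr)^{\beta}\ts h(t-\rho_i)$, with $h$ analytic at $0$, $\alpha$ a root of the indicial polynomial at $\rho_i$, and $\beta\in\nn$ bounded by the order $k$ of the ODE. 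Expanding $A$ in this basis at each dominant $\rho_i$ and invoking the Flajolet--Odlyzko transfer theorems in a common $\Delta$-domain (which exists since the ODE has only finitely many finite singularities) yields
\[
[t^n]\,(1-t/\rho_i)^{\alpha}\log^{\beta}(1-t/\rho_i) \, \sim \, \frac{1}{\Gamma(-\alpha)}\ts \rho_i^{-n}\ts n^{-\alpha-1}(\log n)^{\beta},
\]
and summation over the dominant singularities and the basis elements produces the claimed expansion, with $K_i \in \rr_+$ obtained by pairing complex-conjugate contributions (forced by $a_n \in \qqq$) and invoking the positivity $a_n > C_1^n$.

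\textbf{Main obstacle.} The delicate step is upgrading the a priori algebraic exponents $\alpha \in \overline{\qqq}$ to the rational $\alpha_i \in \qqq$ asserted in the statement, since the indicial polynomial at $\rho_i$ has coefficients only in $\qqq(\rho_i)$. I would handle this by running the analysis directly at the level of the recurrence, using the Birkhoff--Trjitzinsky theory of linear difference equations with polynomial coefficients: the characteristic equation associated with the leading-order dominant balance has rational coefficients and algebraic roots $\lambda_i$, while the subleading balance determining $\alpha_i$ and $\beta_i$ takes place over $\qqq$ once one collects Galois-conjugate singularities into a single orbit (alternatively, one can invoke the arithmetic theory of G-functions of Chudnovsky--Katz--Andr\'e, which applies since $a_n \in \qqq$ grows at most exponentially). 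A secondary technical point is ruling out mutual cancellation of factorial contributions from different putative irregular singularities in $a_n$, which requires comparing Gevrey orders across singular points. I expect the rigorous justification of $\alpha_i \in \qqq$ via the Birkhoff--Trjitzinsky/G-function machinery to be the main obstacle.
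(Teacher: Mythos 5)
Your outline gets the analytic endgame right (transfer/singularity analysis in the Fuchsian case is exactly what the paper invokes via Flajolet--Sedgewick, Thm.~VII.10), but the two steps that actually carry the theorem are not established by your argument, and the first rests on a false claim. You assert that an irregular singular point on $|t|=R$ would force factorial growth of $a_n$, contradicting $a_n<C_2^n$. That is not so: a \emph{finite} irregular singularity produces stretched-exponential corrections $e^{c n^{s}}$ with $0<s<1$, which are perfectly compatible with two-sided exponential bounds. Concretely, $A(t)=e^{t/(1-t)}$ satisfies $(1-t)^2A'-A=0$, so $a_n=\sum_{k\ge 1}\binom{n-1}{k-1}/k!$ is P-recursive with positive rational terms and $(1/2)^n<a_n<2^n$ for all $n\ge 1$; yet $t=1$ is an irregular singular point and $a_n\sim K\, e^{2\sqrt{n}}\, n^{-3/4}$, which is not of the stated form. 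So no purely analytic argument from P-recursiveness plus the growth bounds can yield regularity of the dominant singularities --- the arithmetic of the $a_n$ must enter. (The same example shows that the hypothesis really needed is the full $G$-function condition, i.e.\ that the common denominators of $a_0,\dots,a_n$ also grow at most exponentially; here they grow like $n!$.)

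The paper's proof runs entirely through that arithmetic input: since $A(t)$ is a $G$-function, the Andr\'e--Bombieri--Chudnovsky--Dwork--Katz theory shows its minimal operator is globally nilpotent, hence Fuchsian with \emph{rational} exponents at every singular point, and only then is the regular-case Birkhoff--Trjitzinsky / transfer machinery applied. You mention this theory only parenthetically, and only as a fallback for the rationality of the $\al_i$; but it is also the sole source of the regularity needed in your first step. Your primary proposal for rationality --- a recurrence-level Birkhoff--Trjitzinsky analysis with ``collection of Galois-conjugate singularities'' --- does not work either: the indicial exponents at a singularity $\rho$ are roots of a polynomial over $\qqq(\rho)$, and Galois symmetry only makes the multiset of exponents across an orbit stable; it does not make individual exponents rational. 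Rationality here is Katz's theorem for globally nilpotent operators, i.e.\ again the $G$-function package. In short: promote your parenthetical remark to the backbone of the argument and you recover the paper's proof; as written, both the regularity step and the rationality step have genuine gaps.
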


The theorem is a combination of several known results~\cite{GP-words}.
Briefly, the generating series $\ca(t)$ is a $G$-function in a sense of Siegel~(1929),
which by the works of  Andr\'{e}, Bombieri, Chudnovsky, Dwork and Katz,
must satisfy an ODE which has only regular singular points and rational
exponents.  We then apply the Birkhoff--Trjitzinsky claim/theorem,
which in the regular case has a complete and self-contained proof
in~\cite{FS} (see Theorem~VII.10 and subsequent comments).

\begin{ex}[Euler numbers $E_n$]{\rm Recall that
$$E_n \. \sim \. \frac{4}{\pi}  \left(\frac{2}{\pi}\right)^n  n!
$$
(see e.g.~\cite[p.~7]{FS}).  Then $\{E_n\}$ is not P-recursive, since
otherwise \ts $E_n/n!\sim K\ts \la^N$ \ts with a transcendental base of
exponent \ts $\la = (2/\pi) \notin \ov{\qqq}$.  \ts Note that Euler numbers
can be computed in time \ts $O\bigl(n^{\frac43+\ep}\bigr)$~\cite{Har}. 
}\end{ex}

\begin{ex}[$n$-th prime $p_n$]{\rm Following~\cite{FGS}, recall that \ts
$p_n= n\log n + n \log \log n + O(n)$.  Observe that the \emph{harmonic number} $h_n$
is P-recursive by definition:
$$h_n \, = \, h_{n-1} \. + \. \frac1n  \, = \, 1 \. + \. \frac12 \. + \. \ts \ldots
\ts \. + \. \frac1n \, = \, \log n \. + \. O(1)\ts.
$$
Then $\{p_n\}$ is not P-recursive, since otherwise so is
$$p_n \, - \, n\ts h_n \, = \, n \log \log n + O(n),
$$
which is impossible by Theorem~\ref{t:asy}.
}\end{ex}

\smallskip

\subsection{Lattice walks} \label{ss:walks-lattice}
Let $\Ga=(V,E)$ be a graph
and let $v_0,v_1\in V$ be two fixed vertices.  Let $a_n$ be the
number of walks $v_0\to v_1$ in $\Ga$ of length~$n$.  This is a
good model which leads to many interesting sequences. For example,
Fibonacci number $F_n$ is the number of walks $1\to 1$ of
length~$n$ in the graph on $\{1,2\}$, with edges
$(1,1)$, $(1,2)$ and~$(2,1)$.

For general finite graphs we get C-recursive sequences $\{a_n\}$
with rational~GFs. For the usual walks $0\to 0$ on $\nn$ we get
Catalan numbers $a_{2n}=C_n$ as in~\eqref{eq:formula-Catalan},
while for $\pm 1$ walks in $\zz$ we get $a_{2n}=\binom{2n}{n}$,
both algebraic sequences.  Similarly, for $(0,\pm 1), (\pm 1,0)$
walks in $\zz^2$, we get $a_{2n}=\binom{2n}{n}^2$, which is
P-recursive but not algebraic~\cite{Furst}.  In higher
dimensions or for more complicated graphs, there is no
such neat formula.

\begin{thm} \label{t:walks-Zd}
Let $S\ssu \zz^d$ be a fixed finite set of steps, and let $a_n$
be the number of walks $O\to O$ in $\zz^d$ of length~$n$,
with steps in~$S$.  Then $\{a_n\}$ is P-recursive.
\end{thm}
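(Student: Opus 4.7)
The plan is to realize $a_n$ as the constant term (with respect to spatial variables) of the $n$-th power of a Laurent polynomial, and then invoke the classical theorem that diagonals of multivariate rational functions are D-finite. Since D-finite equals P-recursive at the coefficient level, the result follows.

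First, encode the step set by its Laurent polynomial
$$S(x_1,\ldots,x_d) \, := \, \sum_{s\in S} x^s \, \in \, \zz[x_1^{\pm 1}, \ldots, x_d^{\pm 1}].$$
An easy induction on~$n$ shows that the coefficient of $x^v$ in $S(x)^n$ equals the number of walks $O\to v$ in $\zz^d$ of length~$n$ with steps in~$S$. In particular,
$$a_n \, = \, [x_1^0\cdots x_d^0] \. S(x)^n \, = \, \mathrm{CT}_x \. S(x)^n.$$
Forming the bivariate generating series in $(x,t)$,
$$F(x;t) \, := \, \sum_{n\ge 0} \. S(x)^n \. t^n \, = \, \frac{1}{1-t\ts S(x)},$$
and multiplying numerator and denominator by a monomial $x^w$ large enough to clear all negative exponents, we obtain a genuine rational function $R(x;t)=P(x;t)/Q(x;t)$ with $P,Q\in\zz[x_1,\ldots,x_d,t]$. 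The desired generating function is then
$$A(t) \, := \, \sum_{n\ge 0} a_n \ts t^n \, = \, \mathrm{CT}_x \. F(x;t).$$

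Second, convert the constant-term extraction into a diagonal. Standard manipulations (substitute $x_i\mapsto y_i/z_i$ and cross-multiply, or use the $d$-fold contour integral over tori) rewrite $A(t)$ as the main diagonal of a rational power series in $2d+1$ variables. Now invoke the theorem of Lipshitz, building on work of Gessel, Christol, Furstenberg, and Zeilberger, that any diagonal of a multivariate rational power series is D-finite in the remaining variable. Hence $A(t)$ is D-finite, which by definition means $\{a_n\}$ is P-recursive.

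The main technical content is the diagonal theorem in the last step; the combinatorial encoding and the passage to a diagonal are essentially formal. One way to make the diagonal theorem self-contained here is to argue directly on $A(t)$ via creative telescoping: the space spanned over $\qqq(t)$ by the rational functions $\{t^j \partial_t^i F(x;t): 0\le i,j\}$, taken modulo $\partial_{x_1},\ldots,\partial_{x_d}$-derivatives, embeds into a finite-dimensional $\qqq(t)$-vector space by Bernstein-type dimension bounds, producing a linear ODE for $A(t)$ after applying $\mathrm{CT}_x$. The hard part is precisely this finite-dimensionality statement; everything else reduces to bookkeeping on the Laurent polynomial~$S(x)$.
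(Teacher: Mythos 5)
Your proof is correct and follows exactly the route the paper intends: the paper gives no argument beyond citing \cite[$\S$6.3]{Stanley-EC2}, and the content of that citation is precisely your chain \ts $a_n=\mathrm{CT}_x\ts S(x)^n$, constant term of a rational series as a diagonal, and Lipshitz's theorem that diagonals of rational (more generally, D-finite) power series are D-finite. Nothing further is needed.
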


This result is classical and follows easily from~\cite[$\S$6.3]{Stanley-EC2}.
It suggests that to obtain more interesting sequences one needs to look
elsewhere.  Notably, one can consider natural lattice walks on some
portion of~$\zz^d$.  There is a tremendous number of results in the
literature, remarkable both in scope and beauty.

In recent years, M.~Bousquet-M\'elou and her coauthors initiated a
broad study of the subject, and now have classified all walks in the
first quadrant which start and end at the origin~$O$, and have a fixed
set~$S$ of steps with both coordinates in $\{0,\pm1\}$.  There are in
principle $2^8-1=255$ such walks, but some of them are trivial and some
are the same up to symmetries.  After the classification was completed,
some resulting sequences were proved algebraic
(say, \emph{Kreweras walks} and \emph{Gessel walks});
very surprisingly so, some are D-finite (not a surprise given
Theorem~\ref{t:walks-Zd}), some are D-algebraic (this required
development of new tools), and some are D-transcendental
(it is amazing that this can be done at all).

\begin{ex}[Case~16]\label{ex:case-16}
{\rm
Let \ts $S=\bigl\{(1,1), \ts (-1,-1), \ts (-1,0), \ts (0,-1)\bigr\}$,
and let $a_n$ be the number of walks $O\to O$ in the first quadrant
of length~$n$, with steps in~$S$,
see~\cite[\href{https://oeis.org/A151353}{A151353}]{OEIS}.
It was shown in~\cite[Case~16]{BRS} that
$$
a_n \, \sim \, K\ts \la^n \ts n^\al\ts,
$$
where \ts $\la \approx 3.799605$ \ts is a root of
\. $x^4 +x^3 -8x^2 -36x -11=0$, and \ts $\al \approx -2.318862$ \ts
satisfies $c=-\cos(\pi/\al)$, and $c$ is a root of
$$
y^4 \ts- \ts \frac92 \ts y^3 \ts + \ts \frac{27}{4} y^2 - \frac{35}{8}\ts y + \frac{17}{16} \, = \. 0\ts.
$$
Since $\al \notin \qqq$, Theorem~\ref{t:asy} implies that
$\{a_n\}$ is not P-recursive.
}\end{ex}


We refer to~\cite{Bou,BM} for a comprehensive overview of
the background and early stages of this far-reaching project,
and to~\cite{BBR,BMKS} for some recent developments which are
gateways to references.  Finally, let us mention a remarkable recent
development~\cite{DHRS}, which proves D-transcendence for many
families of lattice walks.  Let us single out just one of the many
results in that paper:

\begin{thm}[{\cite[Thm.~5.8]{DHRS}}]  Sequence
$\{a_n\}$ defined in Example~\ref{ex:case-16}
is D-transcendental.
\end{thm}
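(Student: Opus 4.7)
The plan follows the difference-Galois strategy of DHRS. First, I set up the standard functional equation for walks. Writing $Q(x,y,t) = \sum q_{i,j,n}\ts x^i y^j t^n$ for the GF of walks from~$O$ of length~$n$ ending at $(i,j)$ in the first quadrant with steps in~$S$, the usual bookkeeping yields
\[
K(x,y,t)\ts xy\ts Q(x,y,t) \ = \ xy \. - \. F_1(x,t) \. - \. F_2(y,t),
\]
where $K(x,y,t) = 1 - t\bigl(xy + x^{-1}y^{-1} + x^{-1} + y^{-1}\bigr)$ is the kernel of Case~16 and $F_1,F_2$ record the values of $Q$ on the two axes. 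The goal is to show that $Q(0,0,t) = \sum_n a_n t^n$ is D-transcendental over $\qqq(t)$.

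Second, I study the kernel curve $E_t := \{K(x,y,t)=0\} \ssu \pp^1 \times \pp^1$. For this step set, the generic $E_t$ is smooth of genus one, i.e.\ an elliptic curve. The birational involutions $\iota_1,\iota_2$ exchanging the two sheets in $x$ and in $y$ generate the group of the walk $W = \langle \iota_1, \iota_2\rangle$; for Case~16 this group is infinite, which is consistent with the irrational critical exponent $\al$ that already ruled out P-recursiveness in Example~\ref{ex:case-16}. The composition $\si := \iota_2 \circ \iota_1$ acts on $E_t$ as translation by a point of infinite order.

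Third, I restrict to $E_t$ and produce a first-order $\si$-difference equation. Composing the kernel relation along an orbit of $\si$ yields, after the standard orbit-summation/telescoping trick, a relation
\[
\si(f) \. - \. f \ = \ g \quad \text{on } E_t,
\]
where $f$ is built from $y\ts Q(x,0,t)$ together with its $W$-images and $g$ is an explicit rational function on $E_t$ depending on $t$. D-algebraicity of $Q(0,0,t)$ over $\qqq(t)$ propagates to D-algebraicity of $f$ viewed as a meromorphic function on the elliptic curve $E_t$.

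Fourth, and this is the main obstacle, one must show that this telescoping equation admits no D-algebraic solution. Here I invoke the hypertranscendence criterion of Hardouin--Singer for inhomogeneous linear difference equations: the equation $\si(f)-f = g$ has a D-algebraic solution precisely when $g$ lies, modulo the image of $\si-1$ and $\partial_t$-derivatives of rational functions on $E_t$, in an explicit ``decoupled'' space; equivalently, a list of residues and quasi-periods of $g$ along the $\si$-orbit must balance. Using the classical theory of elliptic functions and the quasi-periodicity of the Weierstrass $\zeta$-function, this balancing condition rewrites as a system of explicit relations among periods of $E_t$ that one must verify \emph{fail} for the Case~16 kernel. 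Checking this non-vanishing residue/period obstruction for generic $t$ is the hard analytic step, since it requires fine control of the elliptic parametrization; once established, it exhibits, via parameterized Picard--Vessiot theory, an element of the differential Galois group certifying D-transcendence of $f$, and hence of $Q(0,0,t) = \sum_n a_n t^n$.
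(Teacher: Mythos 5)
The survey does not prove this theorem; it cites \cite[Thm.~5.8]{DHRS}, so your proposal is being measured against the argument of that paper, whose strategy you have correctly identified: kernel method, genus-one kernel curve, infinite group of the walk with $\si=\iota_2\circ\iota_1$ acting as a translation of infinite order, reduction to a rank-one $\si$-difference equation on $E_t$, and the Hardouin--Singer hypertranscendence criterion. As a proof, however, what you have written is a plan with the decisive step missing. The entire content of the theorem sits in your ``fourth'' step: one must actually compute the obstruction -- in \cite{DHRS} this is the statement that no telescoper exists, i.e.\ there is no linear differential operator $L$ and rational $h$ on $E_t$ with $L(g)=\si(h)-h$, which is verified by exhibiting a pole of $g$ whose $\si$-orbit cannot be cancelled. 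You name this as ``the hard analytic step'' and defer it. Until that non-vanishing is checked for the specific Case~16 kernel (and for at least one, hence all but countably many, values of $t$), nothing is proved: the same outline applies verbatim to the decidedly D-algebraic cases in the Bousquet-M\'elou--Mishna classification, and it is exactly this obstruction that distinguishes them.

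Two further points. First, your functional equation is not quite right for this step set: since $(-1,-1)\in S$, the relation must carry an extra term \ts $+\,t\,Q(0,0,t)$ \ts on the right-hand side (the step $(-1,-1)$ taken from $(0,0)$ would leave the quadrant, and the standard bookkeeping produces this correction); omitting it breaks the boundary decomposition you rely on in step three. Second, the Galoisian machinery most directly yields differential transcendence of \ts $x\mapsto Q(x,0,t)$ \ts with respect to $d/dx$ for fixed generic~$t$; the statement you are asked to prove concerns the excursion series \ts $\sum_n a_n t^n$ \ts as a function of~$t$. Passing from the former to the latter is not automatic and requires an additional argument (this is precisely what the relevant section of \cite{DHRS} supplies), so your closing sentence ``and hence of $Q(0,0,t)$'' hides a genuine gap rather than a routine implication.
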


\smallskip

In conclusion, let us mention that $\{a_n\}$ can be
computed in polynomial time straight from definition using
dynamic programming, since the number of points reachable
after~$n$ steps is \poly$(n)$.  This leads us to consider
walks with constraints or graphs of superpolynomial growth.

\begin{conj}[cf.~\cite{Zei}]
Let $a_n$ denotes the number of self-avoiding walks $O\to O$
in $\zz^2$ of length~$n$.  Then the sequence $\{a_n\}$ has
no Wilfian formula of type~\WI.
\end{conj}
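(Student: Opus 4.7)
The plan is to attempt a polynomial-time reduction from a known \SP-hard counting problem into the evaluation of $a_n$. Since $a_n=\mu^{n+o(n)}$ with connective constant $\mu\approx 2.638$, the output has bit-length $\Theta(n)$, so a \poly$(n)$ algorithm would be strong enough that its nonexistence should follow from embedding genuine \SP-hardness into SAW enumeration in $\zz^2$. A natural source problem is counting closed self-avoiding walks, or equivalently Hamiltonian cycles, in a bounded-degree planar graph $G$; these are \SP-complete and admit parsimonious reductions from \textsc{\#SAT}.

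First, I would try to construct a gadget embedding $\iota\colon G\hookrightarrow \zz^2$ built from long corridors of width one or two joined at planar crossings, encoding each edge of $G$ by a corridor and each vertex of $G$ by a junction. Writing $R=\iota(G)\ssu \zz^2$ and $N=|R|$, the goal is to choose the embedding and a length $n=n(G)$ (e.g.\ forcing length close to $N$, so that Hamiltonicity is enforced on $R$) such that closed SAWs in $\zz^2$ of length $n$ that meet $R$ are in controlled correspondence with closed SAWs in $G$. Standard gadget constructions for grid-graph Hamiltonian problems give some hope that this combinatorial step is feasible, and the reduction is polynomial provided $n(G)=\poly(|G|)$.

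The main obstacle, which is precisely why this conjecture is hard, is that $\zz^2$ is infinite and homogeneous: the count $a_n$ receives enormous contributions from SAWs that bypass $R$ entirely and roam freely through the lattice. These free walks dominate $a_n$ by a factor exponential in $n$, and there is no external boundary to confine walks to $R$. Any attempt to isolate the gadget signal by interpolation over several lengths $n,n+1,\ldots$ or by subtraction of known counts must recover a tiny signal from an exponentially larger total, which is precisely what \poly$(n)$-bit arithmetic cannot deliver reliably without further structure. A successful proof likely needs either a fundamentally new confinement mechanism (perhaps forcing gadget traversal by tuning $n$ against a density argument), a parity-style hardness argument in the spirit of Theorem~\ref{t:mod2} applied to some extracted modular invariant of $a_n$, or a conditional result tying the complexity of $\{a_n\}$ to the computational complexity of the connective constant $\mu$. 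Absent such an idea, breaking the homogeneity barrier in $\zz^2$ appears to be the crux of the problem.
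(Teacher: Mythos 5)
This statement is presented in the paper as an open conjecture (cf.~\cite{Zei}); the paper offers no proof of it, so there is nothing to compare your argument against. What you have written is a plan of attack rather than a proof, and you say as much yourself, but it is worth naming precisely where the plan breaks down, because the obstruction is more basic than the ``homogeneity barrier'' you identify.

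The decisive gap is that your reduction has nowhere to put its input. The function whose hardness you must establish is \ts $n \mapsto a_n$: its only input is the integer~$n$ (of bit-length $O(\log n)$, or $n$ in unary), and the lattice $\zz^2$ is fixed once and for all. A \SP- or \SEXP-hardness reduction must encode an arbitrary instance (your graph~$G$, or a SAT formula) into the input of the target problem; but there is no tile set, no region, no graph parameter in the definition of $a_n$ into which $G$ could be encoded --- only the single number~$n$. Your construction of a region \ts $R=\iota(G)\ssu\zz^2$ \ts and a count of walks meeting~$R$ computes a \emph{different} function, namely ``the number of SAWs of length $n$ confined to a given input region,'' whose hardness (even if established) says nothing about the sequence $\{a_n\}$ itself. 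This is exactly the difficulty the paper flags for Conjecture~\ref{c:tileability-SEXP}, where at least the tile set $\bT$ is available as a carrier of information; here even that is absent. Separately, note that a successful hardness reduction would at best yield a \emph{conditional} non-existence statement (no \poly$(n)$ algorithm unless, say, $\FP=\SP$ or $\EXP=\SEXP$), in line with the paper's only result of this type for pattern avoidance, whereas the conjecture as literally stated is unconditional --- an unconditional proof would amount to a complexity lower bound far beyond current techniques. Your closing suggestions (a parity argument in the spirit of Theorem~\ref{t:mod2}, or tying $\{a_n\}$ to the complexity of the connective constant) are reasonable directions, but as it stands the proposal does not constitute progress toward a proof.
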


We refer to~\cite{Gut} for an extensive investigation of
\emph{self-avoiding walks} and its relatives,
and a review of the literature.

\smallskip

\subsection{Walks on Cayley graphs}\label{ss:walks-Cayley}
Let $G=\<S\>$ be a finitely generated group~$G$
with a 
generating set $S$. Let $a_n=a_n(G,S)$ be the number of words
in~$S$ of length~$n$ equal to~$1$; equivalently, the number of
walks $1\to 1$ of length~$n$, in the Cayley graph $\Ga=\Ga(G,S)$.
In this case $\{a_n\}$ is called the \emph{cogrowth sequence} and
its GF $A(t)$ the \emph{cogrowth series}.  They were introduced
by P\'olya in~1921 in the probabilistic context of random walks on graphs,
and by Kesten in the context of amenability~\cite{Kes}.

The cogrowth sequence $\{a_n\}$ is C-recursive if only if $G$ is finite~\cite{Kuk1}.
It is algebraic for the infinite dihedral group~\cite{Hum}, for the free
 group~\cite{Hai} and for free products of finite groups~\cite{Kuk2},
all with standard generators.  The cogrowth sequence is P-recursive
for many abelian groups~\cite{Hum}, and for the
\emph{Baumslag-Solitar groups} $G=\BS(k,k)$ in the standard presentation
\ts $\BS(k,\ell) = \<x,y\.|\.x^ky=yx^\ell\>$, see~\cite{E+}.

\begin{thm}[\cite{GP-words}]  \label{t:prob}
The sequence \ts $\bigl\{a_n(G,S)\bigr\}$ \ts
is not P-recursive for all \emph{symmetric} $S=S^{-1}$,
and the following classes of groups~$G$:\\
$(1)$ \ts virtually solvable groups of exponential growth with finite Pr\"ufer rank; \\
$(2)$ \ts amenable linear groups of superpolynomial growth; \\
$(3)$ \ts groups of weakly exponential growth
$$
A \ts e^{n^{\al}} \. < \ga_{G,S}(n) \. < \. B \ts e^{n^{\be}}\ts,
$$
where $A, B>0$, and $0< \al,\be<1$; \\
$(4)$ \ts the Baumslag--Solitar groups $\BS(k,1)$, where $k\ge 2$; \\
$(5)$ \ts the lamplighter groups $L(d,H)=H \wr\zz^d$, where~$H$
is a finite abelian group and $d\ge 1$.
\end{thm}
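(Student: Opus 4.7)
The plan is to combine Kesten's amenability criterion with a Varopoulos-type heat-kernel upper bound, and then derive a contradiction via Theorem~\ref{t:asy}. First I would verify that each of the five classes consists of amenable groups of superpolynomial volume growth: classes (1), (4), (5) are virtually solvable and hence amenable; class (2) is amenable by hypothesis (and virtually solvable by the Tits alternative for linear groups, essentially reducing to (1)); class (3) is amenable by the Adel'son-Vel'skii-Shreider criterion since it has subexponential growth. Superpolynomial word growth holds by Gromov's theorem and Milnor-Wolf in (1)-(2), by hypothesis in (3), and by direct verification in (4)-(5).

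Since $S = S^{-1}$, the ratio $p_n := a_n/|S|^n$ is the $n$-step return probability of the simple random walk on $\Ga(G,S)$. I would then apply Kesten's theorem~\cite{Kes}, which gives $a_{2n}^{1/(2n)} \to |S|$ for amenable $G$; combined with the trivial lower bound $a_{2n} \ge |S|^n$ (via walks of the form $s_1 s_1^{-1}\cdots s_n s_n^{-1}$), the growth hypotheses of Theorem~\ref{t:asy} are satisfied for $\{a_{2n}\}$. On the other hand, the Varopoulos/Coulhon-Saloff-Coste heat-kernel bound gives $p_{2n} = o(n^{-d})$ for every $d > 0$ whenever the group has superpolynomial volume growth; in class (3), the quantitative hypothesis $V(r) \ge A e^{r^\alpha}$ upgrades this to the stretched-exponential decay $p_{2n} \lesssim \exp(-c n^{\alpha/(\alpha+2)})$.

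Assuming for contradiction that $\{a_n\}$ is P-recursive, so is $\{a_{2n}\}$, and Theorem~\ref{t:asy} yields
\[
a_{2n} \,\sim\, \sum_{i=1}^m K_i \mu_i^n n^{\alpha_i} (\log n)^{\beta_i}, \qquad \mu_i \in \overline{\qqq},\ \alpha_i \in \qqq,\ \beta_i \in \nn.
\]
Since $a_{2n}^{1/n} \to |S|^2$, the dominant modulus $\max_i |\mu_i|$ must equal $|S|^2$. Collecting the dominant-modulus terms and using the fact that a nonzero finite trigonometric polynomial $\sum_j c_j e^{in\theta_j}$ cannot decay faster than any inverse polynomial, I would obtain $a_{2n} \ge c|S|^{2n} n^\alpha (\log n)^\beta$ on a positive-density subsequence, for some $\alpha\in\qqq$, $\beta\in\nn$. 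Equivalently, $p_{2n}$ would be polynomially bounded below on that subsequence, contradicting the superpolynomial decay from the previous paragraph.

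The main obstacle will be verifying the heat-kernel bound in class (3), where the weakly exponential hypothesis gives only the weaker stretched-exponential decay; fortunately this is still superpolynomial and so suffices. A secondary technicality is the non-cancellation of dominant-modulus terms in the P-recursive asymptotic, which is handled by standard almost-periodic analysis combined with the positivity of $a_{2n}$ and the genuine convergence $a_{2n}^{1/n} \to |S|^2$.
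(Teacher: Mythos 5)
Your proposal is correct in substance and reaches the conclusion by a genuinely more unified route than the one the paper indicates. The argument of \cite{GP-words} sketched here is class-by-class: for each of the five families one quotes known \emph{two-sided} return-probability estimates of stretched-exponential type, e.g.\ the bound \eqref{eq:woess} for $\BS(k,1)$, and observes that a correction factor $e^{-\Theta(n^{\gamma})}$ with $0<\gamma<1$ is incompatible with the asymptotic form guaranteed by Theorem~\ref{t:asy}. You replace the lower heat-kernel bound by Kesten's criterion (amenability forces the dominant exponential rate along the even subsequence to be exactly $|S|^2$) and the upper bound by the one-sided Varopoulos estimate (superpolynomial volume growth forces $p_{2n}=o(n^{-d})$ for all $d$), so a single argument covers all five classes at once; in effect you prove the cleaner statement ``amenable of superpolynomial growth implies not P-recursive''. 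What the paper's route buys is explicitness and independence from Kesten and Varopoulos as black boxes; what yours buys is uniformity. Both hinge on Theorem~\ref{t:asy} in exactly the same way, and both share the same delicate step: extracting a positive-density polynomial lower bound on $p_{2n}$ from the asymptotic expansion when several terms share the dominant modulus. Taking Theorem~\ref{t:asy} at face value (the $K_i$ are positive reals and the relation is a genuine $\sim$), this step is immediate and your almost-periodicity worry evaporates; if one unpacks the Birkhoff--Trjitzinsky expansion honestly, it is the point where both arguments need the same care.

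Two small corrections. The parenthetical claim that class $(2)$ reduces to class $(1)$ is false: by the Tits alternative an amenable finitely generated linear group is indeed virtually solvable, but it need not have finite Pr\"ufer rank (lamplighter-type groups are linear over function fields). This is harmless, since your main argument uses only amenability and superpolynomial growth for class $(2)$. Also, the passage from $\{a_n\}$ to $\{a_{2n}\}$ (multisection preserves P-recursiveness) and the lower bound $a_{2n}\ge |S|^n$ needed for the hypothesis $C_1^n<a_{2n}$ of Theorem~\ref{t:asy} should be stated explicitly; you assert both, and both are standard.
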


Since  $G \simeq \zz \ltimes\zz^2$ with a free action of~$\zz$,
is linear of exponential growth, by~$(2)$ we obtain a solution
to the question originally asked by Kontsevich, see~\cite{Sta-mfo}.

\begin{cor}[\cite{GP-words}]  \label{c:konts}
There is a linear group $G$ and a symmetric generating set~$S$,
s.t.\ the sequence \ts $\bigl\{a_n(G,S)\bigr\}$ \ts
is not P-recursive.
\end{cor}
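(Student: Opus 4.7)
The plan is to produce an explicit witness and invoke Theorem~\ref{t:prob}(2). Following the hint after the statement, I would take $G = \zz \ltimes_A \zz^2$, where the outer $\zz$ acts on~$\zz^2$ by a hyperbolic matrix in $\SL(2,\zz)$, say $A = \bigl(\begin{smallmatrix} 2 & 1 \\ 1 & 1 \end{smallmatrix}\bigr)$, whose eigenvalues are the irrational reals $\phi^{\pm 2}$ with $\phi = (1+\sqrt{5})/2$. The corollary then reduces to four structural checks for this~$G$, after which a symmetric generating set can be chosen arbitrarily.

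First, the $\zz$-action on $\zz^2 \sm \{0\}$ is free, since $1$ is never an eigenvalue of $A^n$ for $n \ne 0$, so $A^n - I$ is invertible over~$\qqq$. Second, $G$ is linear via the faithful block embedding
$$
(n,v) \ \longmapsto \ \begin{pmatrix} A^n & v \\ 0 & 1 \end{pmatrix} \ \in \ \GL(3,\zz).
$$
Third, $G$ is amenable because it is two-step solvable, with abelian normal subgroup $\zz^2$ and abelian quotient~$\zz$. Fourth, $G$ has exponential growth: hyperbolicity of $A$ forces the orbit $\{A^k v : |k| \le n\}$ of any nonzero $v \in \zz^2$ to span an exponentially large region of the plane, and this in turn produces exponentially many distinct group elements in any word-ball of radius~$n$.

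With all four properties in hand, I would fix any finite symmetric generating set $S = S^{-1}$ for~$G$ \ts -- \ts for instance, the standard generators of $\zz$ and $\zz^2$ together with their inverses \ts -- \ts and apply Theorem~\ref{t:prob}(2) to conclude that $\bigl\{a_n(G,S)\bigr\}$ is not P-recursive.

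There is essentially no conceptual obstacle here once Theorem~\ref{t:prob} is granted; the real task is exhibiting a single group that simultaneously satisfies linearity, amenability, and superpolynomial growth. What makes the choice delicate is the \emph{linearity} requirement, which rules out several natural amenable candidates of exponential growth (e.g.\ lamplighters with nonabelian lamp groups). The semidirect product $\zz \ltimes_A \zz^2$ with hyperbolic $A$ is attractive precisely because a single matrix $A$ simultaneously witnesses both linearity and exponential growth.
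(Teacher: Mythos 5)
Your proposal is correct and matches the paper's argument: the paper likewise takes $G \simeq \zz \ltimes \zz^2$ with a free (hyperbolic) $\zz$-action, observes it is linear, amenable (solvable) and of exponential growth, and applies Theorem~\ref{t:prob}(2). The only difference is that you spell out the four verifications the paper leaves implicit.
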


The proof in~\cite{GP-words} is a combination of many results by
different authors.  For example, for $G=\BS(k,1)$, $k\ge 2$,
and every symmetric $\<S\>=G$,
there exist $C_1,C_2>0$ that depend on~$S$, s.t.
\begin{equation}\label{eq:woess}
|S|^n \. e^{-C_1\ts \sqrt[3]{n}} \, \le \, a_n(G,S) \, \le \, |S|^n \. e^{-C_2\ts \sqrt[3]{n}}\.,
\end{equation}
see~\cite[$\S$15.C]{Woe}.  The result now follows from Theorem~\ref{t:asy}.

It may seem from Theorem~\ref{t:prob} that the properties of
\ts $\{a_n(G,S)\}$ \ts depend only on~$G$, but that is false.
In fact, for $G=F_k\times F_\ell$ there are generating sets
with both P-recursive and non-P-recursive sequences; the negative
result in this case is given is proved in~\cite{GP-words}
by using Theorem~\ref{t:mod2}.
For groups in Theorem~\ref{t:prob}, the result is a byproduct of
probabilistic tools used in establishing the
asymptotics such as~\eqref{eq:woess}.  In fact, the probabilities
of return of the random walk \ts $a_n(G,S)/|S|^n$ \ts always have
the same growth under \emph{quasi-isometry}, see
e.g.~\cite{Woe}.\footnote{While the leading term in the asymptotics
remains the same, lower order terms can change for different~$S$,
see~\cite[$\S$17.B]{Woe}.}

It is unlikely that any of the sequences in the theorem are D-algebraic,
but we really have no idea nor any tools to establish such a result
other than by a direct calculation.  An exact asymptotic result
is known for a particular walk on the lamplighter group is given
in~\cite{Rev}.  Let \ts $G=L(1,\zz_2)=\zz_2\wr \zz$, with a symmetric
generating (multi-)set \ts $S=S_1S_2S_1$, where $S_1=\{(1,0), e\}$
and $S_2=\{(0,\pm 1)\}$.
In other words, each generator is a sequence of moves: turn the lamp on or off,
make a step left or right, then turn the lamp on or off at the new location,
all with probability~$1/2$.  Then:
$$
a_n(G,S) \, = \, K \. n^{1/6} \. e^{-C\ts \sqrt[3]{n}}, \quad  \text{where} \ \
K \. = \. \frac{2^{2/3}\ts \pi^{5/6}}{3^{1/2} \ts (\log 2)^{2/3}}\,, \ \
C\. = \. 3\cdot 2^{1/3} \ts \bigl(\pi \ts \log 2\bigr)^{2/3}\..
$$
It would be interesting to see if this sequence is D-algebraic.

In a forthcoming paper~\cite{GP-ADE}
we construct an explicit but highly artificial non-symmetric set \ts
$S\ssu F_k\times F_\ell$ \ts with D-transcendental cogrowth sequence.
In~\cite{KP2} we use the tools in~\cite{KP1} to prove that
groups have an uncountable set of \emph{spectral radii} \ts
$$
\rho(G,S) \, := \, \lim_{n\to \infty} a_n(G,S)^{1/n}\ts.
$$
Since the set of D-algebraic sequence is countable, this implies the
existence of D-transcendental Cayley graphs with symmetric~$S$,
but such a proof is nonconstructive.

\begin{op}\label{op:prob}
Find an explicit construction of $\Ga(G,S)$ when $S$ is symmetric,
and \ts $\{a_n(G,S)\}$ \ts is D-transcendental.
\end{op}

The sequences $\{a_n\}$ have been computed in very few special cases. For
example, for $\PSL(2,\zz)=\zz_2 \ast \zz_3$ with the natural
symmetric generating set, the cogrowth series $A(t)$ is computed
in~\cite{Kuk2}:
{\small
$$\aligned
&A(t) \, = \, \frac{(1+t)\bigl(-t + t^2-8t^3+3t^4-9t^5 +
(2-t+6t^2)\sqrt{\cR(t)}\bigr)}{2(1-3t)(1+3t^2)(1+3t+3t^2)(1-t+3t^2)}\., \\
& \ \  \text{where} \ \ \cR(t)\. = \. 1-2t+t^2-6t^3-8t^4-18t^5+9t^6-54t^7+81t^8.
\endaligned
$$ }
There are more questions than answers here.  For example,
can cogrowth sequence be computed for nilpotent groups?

Before we conclude, let us note that everywhere above we
are implicitly assuming that $G$ either has a faithful
rational representation, e.g.\ $G=\BS(k,1)$ as in~$(4)$ above,
or more generally has the \emph{word problem} solvable in
polynomial time (cf.~\cite{LZ}). The examples include the
\emph{Grigorchuk group}~$\gg$, which is an example of~$3$,
see~\cite{GP} and the lamplighter groups $L(d,H)$ as in~$(5)$.
Note that in general the word problem can be superpolynomial or
even unsolvable, see e.g.~\cite{Mil}, in which case $\{a_n\}$
is no longer a combinatorial sequence.

\smallskip

\subsection{Partitions} \label{ss:formula-part}  Let $p(n)$ be the
number of integer partitions of~$n$, as in~\eqref{eq:formula-part}.
We have the \emph{Hardy--Ramanujan formula}:
\begin{equation}\label{eq:HR-asy}
p(n) \. \sim  \. \frac{1}{4\ts n \ts
\sqrt{3}} \, e^{\pi\ts \sqrt{\frac{2n}{3}}} \quad \text{as} \ \ n\to \infty.
\end{equation}
(see e.g.~\cite[VIII.6]{FS}).  Theorem~\ref{t:asy} implies that $\{p(n)\}$
is not P-recursive.  On the other hand, it is known that \ts
$$
F(t) \, := \, \sum_{n=0}^\infty \. p(n)\ts t^n \, = \, \prod_{i=1}^\infty \frac{1}{1-t^i}
$$
satisfies the following ADE:\footnote{This equation was found by Martin Rubey, see
\ts \href{https://mathoverflow.net/questions/47611/exact-formulas-for-the-partition-function/47706}{https://tinyurl.com/y7ewapjc}.}
\begin{equation}\label{eq:part-ADE}
\aligned
4 \ts F^3 \ts F'' \. &  + \. 5\ts t\ts F^3 \ts F''' \. + \.  t^2 \ts F^3 \ts F^{(4)} \. - \.
16 \ts F^2 \ts (F')^2 \. - \. 15 \ts t \ts F^2\ts F'\ts F'' \. - \. 39 \ts t^2 \ts F^2 \ts (F'')^2 \\
& + \.  20 \ts t^2 \ts F^2 \ts F' \ts F''' \. + \. 10 \ts t \ts F\ts (F')^3 \. + \.
12 \ts t^2 \ts F\ts (F')^2 \ts F'' \. + \. 6\ts t^2 \ts (F')^4 \, = \, 0\ts.
\endaligned
\end{equation}
(cf.~\cite{MiC,Zag}). A quantitative version of Proposition~\ref{p:Wilf} then
implies that $p(n)$ can be computed in time $O(n^{4.5+\ep})$, for all $\ep>0$.
For comparison, the dynamic programming takes
$O(n^{2.5})$ time, where $O(\sqrt{n})$ comes
as the cost of addition.  Similarly, \emph{Euler's recurrence} famously
used by MacMahon~(1915) to compute~$p(200)$, gives an $O(n^2)$ algorithm:
\begin{equation}
\label{eq:part-euler-recurrence}
p(n) \. = \. p(n-1) \ts + \ts p(n-2)\ts - \ts  p(n-5) \ts - \ts  p(n-7)
\ts + \ts  p(n-12)\ts + \ts  p(n-15) \ts - \ts  \ldots
\end{equation}
(cf.~\cite{C+}).  
Finally, there is nearly optimal \ts $O\bigl(\sqrt{n} (\log n)^{4+\ep}\bigr)$ \ts 
time algorithm given in~\cite{Joh}. It is based on the Hardy--Ramanujan--Rademacher 
sharp asymptotic formula, which extends~\eqref{eq:HR-asy} to $o(1)$ additive error.  

Now, for a subset \ts $\ca \subseteq \{1,2,\ldots\}$, let $p_\ca(n)$ denote
the number of partitions of $n$ into parts in~$\ca$.  The dynamic programming
algorithm is easy to generalize to every \ts $\{p_\ca(n)\}$ \ts where the
membership \ts $a\in? \ts\ca$ \ts can be decided in \poly$(\log a)$ time, giving
a Wilfian formula of type~$\WI$.  This is polynomially optimal for
partitions into primes~\cite[\href{https://oeis.org/A000607}{A000607}]{OEIS}
 or squares~\cite[\href{https://oeis.org/A001156}{A001156}]{OEIS}, but not
 for sparse sequences.

\begin{prop}  \label{p:exp-part}
Let \ts $\ca=\{a_1,a_2,\ldots\}$, such that \ts
$a_k \ge c^k$, \ts for some $c>1$ and all $k\ge 1$.
Then \ts $p_\ca(n)= n^{O(\log n)}$.
\end{prop}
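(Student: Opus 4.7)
The plan is to exploit the geometric lower bound on the elements of $\ca$ to control both the number of parts that can appear in a partition of $n$ and the multiplicity with which each such part can appear, and then to simply multiply. First, I would observe that any part $a_k \in \ca$ used in a partition of $n$ must satisfy $a_k \le n$. Combined with the hypothesis $a_k \ge c^k$, this forces $c^k \le n$, i.e.\ $k \le \log_c n$. Setting $K := \lfloor \log_c n\rfloor = O(\log n)$, only the elements $a_1, \ldots, a_K$ of $\ca$ can appear with positive multiplicity in any partition of $n$ into parts from~$\ca$.

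Next, I would encode such a partition by its multiplicity vector $(m_1, \ldots, m_K)$ of non-negative integers subject to $\sum_{k=1}^K m_k\ts a_k = n$. Each individual multiplicity obeys $m_k \le n/a_k \le n/c^k \le n$. Counting tuples coordinate-by-coordinate therefore gives
\[
p_\ca(n) \, \le \, \prod_{k=1}^K \bigl(1 + \lfloor n/a_k\rfloor\bigr) \, \le \, (n+1)^K\ts,
\]
so $\log p_\ca(n) \, \le \, K\ts\log(n+1) \, = \, O\bigl((\log n)^2\bigr)$, which is exactly the bound $p_\ca(n) = n^{O(\log n)}$.

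There is no serious obstacle here; the argument is a direct pigeonhole-style count, and the only choice to make is whether to sharpen the per-coordinate bound from $(1+n)$ to $(1 + n/c^k)$, which improves the leading constant but not the order of magnitude. What is perhaps worth flagging is that the conclusion is tight and cannot be improved to polynomial from the hypothesis alone: for the geometric set $\ca = \{2^k : k \ge 0\}$ the number $b(n)$ of binary partitions of $n$ is known (de Bruijn) to satisfy $\log b(n) \sim \frac{1}{2\log 2}\ts(\log n)^2$, so $b(n) = n^{\Theta(\log n)}$ matches the upper bound up to the constant in the exponent.
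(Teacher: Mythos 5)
Your proof is correct: bounding the index of any usable part by $k\le \log_c n$ and each multiplicity by $n$ gives the count $(n+1)^{O(\log n)}=n^{O(\log n)}$, which is exactly the standard argument intended here (the paper states Proposition~\ref{p:exp-part} without proof). Your closing remark that binary partitions show the bound is tight up to the constant in the exponent is also accurate and consistent with the paper's surrounding discussion of Wilfian formulas of type~\WIII.
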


Thus, $p_\ca(n)$ as in the proposition could in principle have a Wilfian
formula of type~\WIII.  Notable examples include the number $q(n)$ of
\emph{binary partitions} (partitions of $n$ into powers of~$2$),
see \cite[\href{https://oeis.org/A000123}{A000123}]{OEIS},
\emph{partitions into Fibonacci numbers} \cite[\href{https://oeis.org/A003107}{A003107}]{OEIS},
and \emph{\textsf{s}-partitions} defined as partitions into $\{1,3,7,\ldots,2^k-1,\ldots\}$
\cite[\href{https://oeis.org/A000929}{A000929}]{OEIS}.

\begin{thm}[\cite{PY-binary}]
Let \ts $\ca=\{a_1,a_2,\ldots\}$, and suppose \ts $a_{k}/a_{k-1}$ \ts
is an integer $\ge 2$, for all $k> 1$.  Suppose also that membership $x\in \ca$
can be decided in \ts{\rm poly}$(\log x)$ time.  Then \ts $\{p_\ca(n)\}$
can be computed in time \ts{\rm poly}$(\log n)$.
\end{thm}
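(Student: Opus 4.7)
My plan is to exploit the divisibility chain $a_1 \mid a_2 \mid \cdots$ to express $p_k(n) := p_{\{a_1, \ldots, a_k\}}(n)$ as a quasi-polynomial in $n$ of period $a_k$ and degree $k - 1$, and then iterate up to level $K = O(\log n)$ where $p_\ca(n) = p_K(n)$. As a preliminary reduction, if $a_1 \nmid n$ then $p_\ca(n) = 0$; otherwise I would divide $n$ and every $a_i$ by $a_1$, so \textbf{WLOG} $a_1 = 1$. Since $a_k \geq 2^{k-1}$, the largest $K$ with $a_K \leq n$ satisfies $K = O(\log n)$, and using the fast membership oracle together with the divisibility $a_{k-1} \mid a_k$ I would enumerate $a_1, \ldots, a_K$ in poly$(\log n)$ time.

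The heart of the argument is the structural claim that for each $k$ and each residue $r \in \{0, 1, \ldots, a_k - 1\}$, there exists $P_{k, r} \in \qqq[x]$ of degree $k - 1$ with $p_k(n) = P_{k, r}(n)$ for all $n \geq 0$ with $n \equiv r \pmod{a_k}$. I would prove this by induction on $k$, starting from $P_{1, 0} \equiv 1$. For the inductive step, I would unroll the standard recurrence $p_k(n) = p_{k-1}(n) + p_k(n - a_k)$ to
\[
p_k(n) \,=\, \sum_{u = 0}^{Q} p_{k-1}(r + u\ts a_k), \qquad r = n \bmod a_k, \ \ Q = (n - r)/a_k.
\]
Since $a_{k-1} \mid a_k$, each argument satisfies $r + u\ts a_k \equiv r \pmod{a_{k-1}}$, so by induction the summand equals $P_{k-1, \ts r \bmod a_{k-1}}(r + u\ts a_k)$ --- a polynomial in $u$ of degree $k - 2$. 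Faulhaber's formula then collapses the sum to a polynomial in $Q$ of degree $k - 1$, hence to a polynomial of degree $k - 1$ in $n$, giving $P_{k, r}$.

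The algorithm is then immediate: because of the divisibility $a_{k-1} \mid a_k$, only one polynomial per level is needed --- namely $P_{k, \ts n \bmod a_k}$, whose predecessor in the induction is $P_{k-1, \ts n \bmod a_{k-1}}$. I would compute them sequentially for $k = 1, 2, \ldots, K$ by polynomial substitution followed by a Faulhaber-type summation, then evaluate $P_{K, \ts n \bmod a_K}(n)$ to output $p_\ca(n)$.

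The main obstacle I anticipate is controlling bit-size: each $P_{k,r}$ has $O(\log n)$ coefficients, and \emph{a priori} nothing prevents catastrophic blow-up in the monomial basis. To tame this I would argue that $P_{k, r}$ interpolates non-negative integers bounded by $p_\ca(n) \leq \prod_{k=1}^K (1 + n/a_k) = 2^{O(\log^2 n)}$ at $k$ small integer nodes in the residue class, so Lagrange interpolation forces its rational coefficients to have numerators and denominators of poly$(\log n)$ bit-size. Each of the $O(\log n)$ levels then costs poly$(\log n)$ in polynomial arithmetic, yielding total running time poly$(\log n)$, as claimed.
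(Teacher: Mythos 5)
The survey gives no proof of this theorem---it is quoted from \cite{PY-binary}, listed as ``in preparation''---so your argument has to be judged on its own merits. The core of it is sound. The recurrence $p_k(n)=p_{k-1}(n)+p_k(n-a_k)$ unrolls exactly as you say, the divisibility $a_{k-1}\mid a_k$ keeps every argument $r+u\,a_k$ in a single residue class modulo $a_{k-1}$, and the induction does produce a quasi-polynomial of period $a_k$ and degree $k-1$ at each level, with only the single chain of residues $n \bmod a_1,\, n\bmod a_2,\ldots$ ever being needed. Your handling of coefficient growth is also the right one, and it is genuinely necessary: a bound that merely says ``poly$(\log n)$ bits per level'' could compound over the $O(\log n)$ levels into $n^{O(1)}$ bits, whereas re-deriving a \emph{uniform} bound at every level by Lagrange interpolation at the $k$ nodes $r, r+a_k,\ldots,r+(k-1)a_k$ (values at most $\prod_j(1+m/a_j)=2^{O(\log^2 n)}$, denominators $a_k^{k-1}\,j!\,(k-1-j)!$ of bit-size $O(\log^2 n)$) caps the coefficients independently of $k$. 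One small slip: those nodes reach up to about $K a_K$, which can exceed $n$, so the value bound should be stated for $p_k(m)$ with $m=O(n\log n)$ rather than literally for $p_\ca(n)$; nothing changes.

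The one genuine gap is the claim that you can ``enumerate $a_1,\ldots,a_K$ in poly$(\log n)$ time'' from the membership oracle and the divisibility relation. Given $a_{k-1}$, the next element is $m\cdot a_{k-1}$ for an integer $m\ge 2$ about which you know nothing, so locating it by membership queries costs $a_k/a_{k-1}$ tests, which can be as large as $n$; finding $a_1$ has the same problem. Indeed, with access to $\ca$ \emph{only} through a membership oracle the statement is false as written: for $\ca=\{1\}$ versus $\ca=\{1,N\}$ with $N\in(n/2,n]$, an algorithm making polylogarithmically many queries cannot distinguish the two sets, yet the values $p_\ca(n)$ differ. So the hypothesis must implicitly be that the $k$-th element of $\ca$ is itself computable in poly$(\log a_k)$ time (as it is in all the intended applications: binary partitions, partitions into factorials, etc.), and your enumeration sentence should appeal to that rather than to the membership test. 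With that repair, the rest of your algorithm goes through.
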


This covers binary partitions, partitions into factorials
\cite[\href{https://oeis.org/A064986}{A064986}]{OEIS}, etc.

\smallskip

We conjecture that partitions into Fibonacci numbers and
\textsf{s}-partitions also have Wilfian formulas of
type~\WIII. Cf.~\cite{Rob} for an algorithm for partitions
into distinct Fibonacci numbers.  Note also that membership
can be tested in polynomial time: \ts $N$ is a Fibonacci number
if and only if $5N^2+4$ or $5N^2-4$ is a perfect square~\cite{Ges-Fib}.

Other partition sequences \ts $\{p_\ca(n)\}$ \ts with $\cA$ as
in Proposition~\ref{p:exp-part}, could prove less tractable.
These include \emph{partitions into Catalan
numbers} \cite[\href{https://oeis.org/A033552}{A033552}]{OEIS}
and \emph{partitions into partition numbers}
\cite[\href{https://oeis.org/A007279}{A007279}]{OEIS}.

We should mention that connection between algebraic properties
of GFs and complexity goes only one way:

\begin{thm}
The sequence $\{q(n)\}$ of the number of binary partitions
is D-transcendental.
\end{thm}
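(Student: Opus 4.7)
The plan is to exploit the fact that the generating function
\[
Q(t) \, := \, \sum_{n \geq 0} q(n)\ts t^n \, = \, \prod_{k=0}^\infty \. \frac{1}{1 - t^{2^k}}
\]
satisfies a Mahler-type functional equation. Indeed, factoring out the $k=0$ factor and re-indexing the remaining product yields
\[
(1-t)\. Q(t) \, = \, Q(t^2)\ts,
\]
so $Q(t)$ is a Mahler function of order~$1$ and degree~$2$ over $\qqq(t)$.

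The key input is then the dichotomy theorem of Adamczewski--Dreyfus--Hardouin, which asserts that every Mahler function that is D-algebraic over $\ov{\qqq}(t)$ is necessarily rational. (This builds on the earlier theorem of Nishioka, which covers the D-finite case.) Applied to $Q(t)$, it reduces the task to verifying that $Q(t)$ is \emph{not} a rational function.

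To rule out rationality, I would appeal to asymptotics. De Bruijn's classical estimate gives $\log q(n) \sim (\log n)^2/(2\log 2)$, so $\{q(n)\}$ grows super-polynomially yet sub-exponentially. On the other hand, the coefficients of any rational GF form a C-recursive sequence, and hence are either polynomially bounded or of genuinely exponential order. This is incompatible with de Bruijn's asymptotics, so $Q(t)$ is not rational, completing the argument.

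The principal obstacle is the invocation of the Mahler dichotomy theorem: this is a deep result from the difference Galois theory of Mahler equations, and I am not aware of an elementary substitute. Once it is taken as a black box, the remaining steps — deriving the functional equation from the infinite product and eliminating the rational case via asymptotic growth — are essentially routine.
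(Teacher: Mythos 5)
Your proof is correct and takes essentially the same route as the paper: both derive the order-one Mahler equation $(1-t)\ts Q(t)=Q(t^2)$ from the product formula and then invoke the rational-or-hypertranscendental dichotomy for Mahler functions (the paper cites Dreyfus--Hardouin--Roques~\cite{DHR}; your Adamczewski--Dreyfus--Hardouin reference is the closely related order-one dichotomy). The paper leaves the non-rationality of $Q$ implicit, whereas you verify it via de Bruijn's asymptotics --- a correct, if slightly heavier, way to do it (the infinitely many singularities of the product at $2^k$-th roots of unity would also suffice).
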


This follows from the \emph{Mahler equation}
$$
Q(t) \ts - \ts t \ts Q(t) \ts - \ts Q(t^2) \. = \. 0\ts, \quad
\text{where} \quad Q(t) \. = \. \sum_{n=0}^\infty \ts q(n) \ts t^n,
$$
see e.g.~\cite{DHR}. We conjecture that $\{a_n\}$ and $\{b_n\}$
from Conjecture~\ref{c:planar-poly} satisfy similar functional
equations, and are also D-transcendental.

\smallskip

\subsection{Pattern avoidance} \label{ss:formula-avoid}
Let $\si \in S_n$ and $\om \in S_k$.  The permutation $\si$ is said
to \emph{contain} the \emph{pattern} $\om$ if there is a subset
$X\subseteq \{1,\ldots,n\}$, $|X|=k$, such that $\si|_X$ has
the same relative order as~$\om$.  Otherwise, $\si$
is said to \emph{avoid}~$\om$.

Fix a set of patterns $\cf \subset S_k$. Let $\Av_n(\mathcal{F})$ denote
the number of permutations $\si\in S_n$ \emph{avoiding}
all patterns $\om \in\cf$.  The sequence \ts $\{\Av_n(\cf)\}$ \ts
is the fundamental object of study in the area of \emph{pattern avoidance},
extensively analyzed from analytic, asymptotic and combinatorial
points of view.

The subject was initiated by MacMahon (1915) and Knuth (1973),
who showed that \ts $\Av_n(123) =\Av_n(213) = C_n$, the
$n$-th Catalan number~\eqref{eq:formula-Catalan}.  The
\emph{Erd\H{o}s--Szekeres theorem} (1935) on longest increasing
and decreasing subsequences in a permutation can also be phrased
in this language:  \ts
$\Av_n(12\cdots k, \ell\cdots 21) \ts = \ts 0$, for all \ts
$n>(k-1)(\ell-1)$.

To give a flavor of subsequent developments, let us mention a
few more of our most favorite results.  Simion--Schmidt (1985)
proved \ts $\Av_n(123,132,213)=F_{n+1}$, the Fibonacci numbers.
Similarly, Shapiro--Stephens (1991) proved \ts $\Av_n(2413,3142)=S(n)$,
the Schr\"oder numbers \cite[\href{https://oeis.org/A006318}{A006318}]{OEIS}.
The celebrated Marcus--Tardos theorem~\cite{MT} states
that \ts $\{\Av_n(\om)\}$ \ts is at most exponential, for all $\om\in S_k$,
with a large base of exponent for random $\om\in S_k$~\cite{Fox}.
We refer to~\cite{Kit,Kla-survey,Vat} for many results on the
subject, history and background.

\smallskip

The \emph{Noonan--Zeilberger conjecture}~\cite{NZ}, first posed as a
question by Gessel~\cite{Ges}, states that the sequence
\ts $\{\Av_n(\cf)\}$ \ts is P-recursive for all $\cF\ssu S_k$.
It was recently disproved:

\begin{thm}[\cite{GP-NZ}]
There is \ts $\cF\ssu S_{80}$, \ts $|\cF|<30,000$, such that
\ts $\{\Av_n(\cf)\}$ \ts is not P-recursive.
\end{thm}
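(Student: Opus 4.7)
The strategy is to embed into pattern avoidance a combinatorial counting problem already known to be non-P-recursive, and the natural source is the cogrowth framework of Section~2.4. Concretely, I would target a sequence $\{a_n(G,S)\}$ from Theorem~2.17 — say $G=\BS(2,1)$ with its standard symmetric generators, where \eqref{eq:woess} gives $a_n \asymp |S|^n e^{-\Theta(\sqrt[3]{n})}$, an asymptotic ruled out for P-recursive sequences by Theorem~\ref{t:asy}. The goal is then to build $\cF \subset S_{80}$ so that $\Av_n(\cF)$ agrees with (or is polynomially related to) this walk-counting sequence, and hence cannot be P-recursive either.

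First, I would design a finite alphabet of \emph{block patterns} — specific permutations of moderate size which will play the role of letters — and assemble $\cF$ so that any $\sigma \in S_n$ avoiding $\cF$ is forced, canonically, to decompose as a horizontal/vertical concatenation of such blocks. This is the key technical device: the forbidden patterns must prohibit any ``interleaving'' between blocks while permitting every legal concatenation, thereby reducing pattern avoidance to a problem about words over a finite alphabet. The flexibility to choose block size up to $80$ is what makes this feasible: one can build blocks with enough distinguishing structure that the local forbidden patterns needed to pin down the decomposition are few in number.

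Second, among the admissible blocks I would single out ``generator blocks'' corresponding to the elements of~$S$, together with auxiliary ``state blocks'' that track a current position in the Cayley graph $\Ga(G,S)$. Additional patterns in $\cF$ would enforce that consecutive blocks correspond to a valid transition: multiplication by a generator in~$G$ updates the state block in the legal way. Because $\BS(2,1)$ has a linear faithful representation and its word problem is in~$\PP$, these transitions are finite local conditions and can be encoded by finitely many forbidden patterns. The admissible permutations of size~$n$ then biject (up to a polynomial factor from block-length rescaling and boundary conventions) with closed walks $1\to 1$ of length $\Theta(n)$ in~$\Ga(G,S)$. Combining this bijection with the stretched-exponential lower and upper bounds \eqref{eq:woess} and Theorem~\ref{t:asy} yields non-P-recursiveness of $\{\Av_n(\cF)\}$.

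The hard part is Steps~1 and~2 simultaneously: one must design the block alphabet and the forbidden set so that \textbf{(a)} the decomposition into blocks is genuinely forced by pattern avoidance (no parasitic permutations slip through), \textbf{(b)} the local transition constraints capture the group multiplication exactly, and \textbf{(c)} the total budget satisfies the size bounds $|\cF|<30{,}000$ and patterns in $S_{80}$. Requirement~(a) is the real combinatorial nuisance, since pattern containment is a global condition but we are using it as a local grammar; one expects to need many forbidden patterns just to rule out unintended cross-block interactions, and the bookkeeping to show the construction is parsimonious (or at least polynomially so) against a specific non-P-recursive target is where most of the work lives. Choosing the simplest possible target group — one with a small symmetric generating set and a transparent walk model — and a modular block design are the main levers for staying within the stated bounds.
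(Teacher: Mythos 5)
There is a genuine gap, and it sits exactly where you located the difficulty. Your plan is: bounded-size blocks play the role of letters, and finitely many forbidden patterns enforce \emph{local} transition constraints between consecutive blocks, including ``state blocks that track a current position in the Cayley graph $\Ga(G,S)$.'' This cannot work for two reasons. First, $\BS(2,1)$ has exponential growth, so the set of positions reachable after $n$ steps has size $\exp\Theta(n)$; no block of size at most $80$ can record the current group element, and if instead the permutation records only the word $s_1\cdots s_m$, then the condition $s_1\cdots s_m=1$ in $G$ is a \emph{global} constraint -- the cogrowth language of an infinite group is never regular (cf.\ the fact that the cogrowth sequence is C-recursive only for finite groups). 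Second, and more structurally: a grammar consisting of a finite alphabet of bounded-size blocks with transition constraints between consecutive blocks is precisely a finite-state automaton, so the number of admissible configurations of length $n$ has a rational generating function and is C-recursive, hence P-recursive. In other words, the class of constructions you describe is provably incapable of producing a non-P-recursive sequence; some essentially non-local mechanism is required.

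The actual proof in~\cite{GP-NZ} takes a different route on both ends. Rather than aiming for a parsimonious (exact or polynomially-controlled) count of $\Av_n(\cF)$ -- which is exactly the ``no parasitic permutations'' problem you flag, and which nobody knows how to solve -- it embeds a Turing-machine-like computation and controls only the \emph{parity} $\Av_n(\cF)\bmod 2$: spurious avoiding permutations are allowed, but an involution pairs them up so they cancel modulo~$2$. Non-P-recursiveness is then deduced not from Theorem~\ref{t:asy} but from Theorem~\ref{t:mod2}, by arranging that the binary word $\bigl(\Av_n(\cF)\bmod 2\bigr)_{n\ge 1}$ contains every finite binary word as a subword. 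Your asymptotic endgame (Theorem~\ref{t:asy} applied to the stretched-exponential bounds~\eqref{eq:woess}) would be fine \emph{if} one had an exact reduction to a cogrowth sequence, but obtaining such a reduction is the unsolved part, and the mod-$2$ device exists precisely to avoid it.
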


We extend this result in a forthcoming paper~\cite{GP-ADE}, where
we construct a D-transcendent sequence \ts $\{\Av_n(\cf)\}$,
for some $\cf\ssu S_{80}$.  Both proofs involve
embedding of Turing machines into the problem modulo~2. We also
prove the following result on complexity of
counting pattern-avoiding permutations,
our only result forbidding Wilfian formulas:

\begin{thm}[\cite{GP-NZ}]
If \. $\EXP\ne \oplus\EXP$, then \ts $\Av_n(\cf)$~{\rm mod}~$2$
\ts cannot be computed in \textrm{\poly}$(n)$ time.
\end{thm}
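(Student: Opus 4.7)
The plan is to prove the contrapositive: assuming $\Av_n(\cf)$ mod 2 can be computed in \poly$(n)$ time for the specific pattern set $\cf$ produced below, I will derive $\EXP = \oplus\EXP$.

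The key input is the Turing-machine embedding referenced just above the statement --- the same construction used to prove the preceding non-P-recursiveness theorem, worked out in detail in~\cite{GP-NZ}. Given any language $L \in \oplus\EXP$, witnessed by a nondeterministic Turing machine $N$ that on input $x$ of length $m$ runs in time $2^{p(m)}$, I would use this embedding to produce a fixed pattern set $\cf_N \subset S_k$ (with $k$ a constant depending only on $N$) together with, for each $x$, a positive integer $n(x) = 2^{q(m)}$ computable in time \poly$(m)$, such that
\[
\Av_{n(x)}(\cf_N) \ \equiv \ \#\bigl\{\text{accepting paths of } N \text{ on } x\bigr\} \pmod{2}.
\]
Thus $x \in L$ iff $\Av_{n(x)}(\cf_N)$ is odd.

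Now suppose for contradiction that $\Av_n(\cf_N)$ mod 2 can be computed in \poly$(n)$ time. Then evaluating the parity at $n = n(x)$ takes time \poly$\bigl(2^{q(m)}\bigr) = 2^{O(q(m))}$, which is $\EXP$ in the original input length $m$. Hence $L \in \EXP$, and since $L \in \oplus\EXP$ was arbitrary, $\oplus\EXP \subseteq \EXP$. Combined with the trivial inclusion $\EXP \subseteq \oplus\EXP$ (a deterministic computation has $0$ or $1$ accepting paths, whose parity coincides with the decision), we conclude $\EXP = \oplus\EXP$, contradicting the hypothesis.

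The main obstacle is the embedding itself: one needs a \emph{parity-preserving} reduction from Turing machine acceptance to counting pattern-avoiding permutations, not the stronger parsimonious bijection (which would yield $\SEXP$-hardness, currently out of reach). This is the technical heart of~\cite{GP-NZ}: the gadget permutations encoding configurations and transitions of $N$ must be arranged so that every spurious or partial simulation of $N$ contributes an even number of pattern-avoiders of length $n(x)$, so that only the parity of genuine accepting computations survives modulo~$2$. The length $n(x) = 2^{q(m)}$ must be chosen so that a complete time-space diagram of $N$ on $x$ fits inside a permutation of this size, while the local-check patterns in $\cf_N$ remain of bounded size independent of $x$. Once this embedding is granted, the complexity deduction above is short bookkeeping.
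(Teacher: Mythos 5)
The survey gives no proof of this theorem: it only attributes the result to~\cite{GP-NZ} and remarks that the proof ``involves embedding of Turing machines into the problem modulo~2''. Your outline matches that description, and the complexity bookkeeping is right (an algorithm running in time polynomial in the \emph{value} $n=2^{q(m)}$ runs in time $2^{O(q(m))}$, i.e.\ $\EXP$ in the input length $m$; and $\EXP\subseteq\oplus\EXP$ is indeed trivial). There is, however, one genuine logical slip in your quantifiers. You announce that you will assume poly-time computability of $\Av_n(\cf)~\mathrm{mod}~2$ ``for the specific pattern set $\cf$ produced below'', yet your construction produces a different $\cf_N$ for each language $L\in\oplus\EXP$. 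Assuming the hypothesis for one such $\cf_N$ places only that one $L$ in $\EXP$; the closing step ``since $L\in\oplus\EXP$ was arbitrary'' is no longer available once the hypothesis has been pinned to a single machine, so as written you do not reach $\oplus\EXP\subseteq\EXP$. The fix is standard but must be stated: instantiate $L$ as a $\oplus\EXP$-complete language (complete under polynomial-time reductions), so that $L\in\EXP$ for that single $L$ already forces $\oplus\EXP\subseteq\EXP$. This is also what the literal statement requires, since it asserts hardness for \emph{some} fixed $\cf$, and completeness is exactly what makes one $\cf$ suffice.

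Beyond that, be aware that all the substance is delegated to the parity-preserving embedding, which you describe plausibly but do not construct. You correctly identify that it must be parity-preserving rather than parsimonious (a parsimonious reduction would give $\SEXP$-hardness, which the survey explicitly says is out of reach), and that spurious or partial simulations must cancel modulo~$2$. Since the survey itself offers no construction, this cannot be checked against the paper here; as a self-contained argument your text is an outline whose technical heart is left as a citation to~\cite{GP-NZ}.
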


Here~$\oplus\EXP$ is the class of counting modulo~2 problems
of combinatorial objects in~$\NEXP$.
In other words, computing parity of the number of
pattern-avoiding permutations is likely hard.
We conjecture that $\Av_n(\cf)$ is $\SEXP$-complete,
but we are not very close to proving this.

\begin{thm}[\cite{GP-NZ}]
The problem whether \ts $\Av_n(\cf)=\Av_n(\cf') \mod~2$ \ts for all~$n$,
is undecidable.
\end{thm}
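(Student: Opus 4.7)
The plan is to reduce the classical \textsc{Halting Problem} to the claimed equivalence-mod-$2$ question. Given a Turing machine~$M$, I would construct, effectively from the description of~$M$, a set of patterns $\cf_M$ such that
\[
\Av_n(\cf_M) \. \equiv \. n! \. \pmod{2} \ \ \text{for all}\ n \ \ \Longleftrightarrow\ \ M\ \text{does not halt on the empty input.}
\]
Since $\Av_n(\varnothing) = n!$, taking $\cf' = \varnothing$ makes the parity-equivalence problem at least as hard as the \textsc{Halting Problem}, and hence undecidable.

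The core ingredient is the Turing-machine embedding that already underlies the parity hardness theorem cited just above: one designs $\cf_M$ so that the binary sequence $\bigl\{\Av_n(\cf_M) \bmod 2\bigr\}$ simulates, step by step, the computation of~$M$ on the blank tape. The intricate work, done in~\cite{GP-NZ}, is to encode the TM transition rules as forbidden patterns so that, modulo~$2$, the count of permutations of $[n]$ avoiding~$\cf_M$ reflects the TM configuration at time~$n$ in a controlled way. Since every pattern in $\cf_M$ can be taken to have length at least~$2$, the equality $\Av_0(\cf_M) = \Av_1(\cf_M) = 1 = 0! = 1!$ is automatic, and the match with $n!$ for small $n$ costs nothing.

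To convert this embedding into the claimed reduction, I would augment $\cf_M$ with a small collection of auxiliary \emph{marker} patterns designed to contribute an odd count to $\Av_n(\cf_M)$ precisely at those steps~$n$ at which $M$ has reached its halting state, and to contribute nothing $\pmod 2$ otherwise. With this in place, $\Av_n(\cf_M) \equiv 0 \pmod{2}$ for every $n \ge 2$ iff~$M$ never halts, exactly matching $n! \bmod 2$. The bi-infinite equality of parities then detects non-halting, completing the reduction.

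The hard part is precisely this last engineering step: making the halting event visible as a specific, predictable parity flip rather than as a mere eventual change in the asymptotic behavior of the sequence. This requires an inclusion--exclusion layered on top of the TM simulation, ensuring that the marker patterns are activated only once the halting configuration appears in the transcript, and that their mod-$2$ contribution does not cancel against anything in the simulation. No fundamentally new technique beyond the embedding of~\cite{GP-NZ} should be needed, but the combinatorial bookkeeping — in particular, showing that ``halting'' and ``parity flip'' can be made synonymous while preserving the pattern-avoidance structure — is where the real effort will go.
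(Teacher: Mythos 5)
The survey does not actually contain a proof of this theorem: it is quoted from~\cite{GP-NZ}, and the only indication of method given here is that ``both proofs involve embedding of Turing machines into the problem modulo~2,'' together with the remark (immediately after the theorem) that the construction produces \emph{two} sets $\cF,\cF'$ whose parities first differ at a time as large as a tower of~2s. Your high-level strategy --- reduce from the \textsc{Halting Problem} through the mod-2 Turing machine embedding --- therefore matches the intended route.

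The genuine gap is your normalization $\cF'=\varnothing$. That choice converts the problem into: ``is $\Av_n(\cF_M)$ even for every $n\ge 2$ until (and only until) $M$ halts?'', i.e.\ it demands control of the \emph{absolute} parity of a single avoidance sequence. The machinery of~\cite{GP-NZ} does not deliver this: what the Turing machine embedding controls is the \emph{difference} $\Av_n(\cF)-\Av_n(\cF')\pmod 2$ for a pair of constructed sets, and there is no known way to force a single constructed $\cF_M$ to satisfy $\Av_n(\cF_M)\equiv n!\equiv 0\pmod 2$ for all $n\ge 2$ up to the halting time. The survey's follow-up remark about two sets whose parities first differ at a tower-of-2s value of $n$ confirms that the actual reduction compares two nonempty constructed families, not one family against $S_n$ itself. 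Your ``marker patterns'' paragraph, where you acknowledge the real work lies, is precisely the step that would have to solve this harder single-sequence problem, and as described it is a black box with no mechanism behind it. The repair is to drop the normalization entirely: run the embedding to produce a pair $\cF_M,\cF'_M$ with $\Av_n(\cF_M)\equiv\Av_n(\cF'_M)\pmod 2$ for all $n$ if and only if $M$ does not halt --- this is exactly the form in which the theorem is stated, and it is the form the difference-based encoding naturally yields.
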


The theorem implies that in some cases even a large amount of computational
evidence in pattern avoidance is misleading.  For example, there exists two
sets of patterns \ts $\cf,\cf'\in S_k$, so that the first time
they have different parity is for $n>$ tower of 2s of height~$2^k$.

\smallskip

Finally, let us mention an ongoing effort to find a small set of
patterns~$\cF$, so that \ts $\{\Av_n(\cf)\}$ \ts is not P-recursive.
Is one permutation enough?  It is known that $\{\Av_n(1342)\}$
is algebraic~\cite{Bona}, while $\{\Av_n(1234) \}$ is
P-recursive~\cite{Ges}.  One of the most challenging problems is to analyze
\ts $\{\Av_n(1324)\}$, the only 4-pattern remaining.  The asymptotics
obtained experimentally in~\cite{CGZ} based on the values for $n\le 50$, suggests:
\[
\Av_n(1324) \. \sim \. B\ts \la^n \ts \mu^{\sqrt{n}} \ts n^\al,
\]
where $\la=11.600 \pm 0.003$, $\mu = 0.0400 \pm 0.0005$, $\al = -1.1 \pm 0.1$.
By Theorem~\ref{t:asy}, this is a convincing evidence against \ts $\{\Av_n(1324)\}$ \ts
being P-recursive.  While proving such a result remains out of reach,
the following problem could be easier.

\begin{op}\label{op:1324}
Can \ts $\{\Av_n(1324)\}$ \ts be computed in \textrm{poly}$(n)$
time?\footnote{In 2005, Doron Zeilberger expressed doubts that \ts
$\Av_{1000}(1324)$ \ts could  be computed even by Hashem.
This sentiment has been roundly criticized on both mathematical
and theological grounds (see~\cite{Ste}). }  More generally,
can one find a single permutation~$\pi$ such that
\ts $\{\Av_n(\pi)\}$ \ts cannot be computed in \text{poly}$(n)$
time?  Is the computation of \ts $\{\Av_n(\pi)\}$ \ts easier
or harder for \emph{random} permutations $\pi \in S_k$?
\end{op}

In the opposite direction, let us mention a sequence \ts
$\bigl\{\Av_n(4123, 4231, 4312)\bigr\}$ \ts which does have a
Wilfian formula of type~$\WI$, with an extremely strong computational
evidence for being D-transcendental~\cite{A++}.

\medskip

\section{Bijections}\label{s:bij}

\subsection{Counting and sampling via bijections} \label{ss:bij-enum}
There is an ocean of bijections between various combinatorial
objects.  They have a variety of uses: to establish a theorem,
to obtain refined counting, to simplify the proof, to make the
proof amenable for generalizations, etc.  Last but not least,
an especially beautiful bijection is often viewed as
a piece of art, an achievement in its own right, a result
to be taught and admired.

From the point of view of this survey, bijections \ts
$\vp: \cA_n \to \cB_n$ \ts are simply algorithms which
require complexity analysis.  There are two standard
applications of such bijections.  First, their existence
allows us to reduce counting of $\bigl\{|\ca_n|\bigr\}$
to counting of $\bigl\{|\cb_n|\bigr\}$.  For example, the
classical \emph{Pr\"ufer algorithm} allows counting of
spanning trees in $K_n$, reducing it to Cayley's
formula~\eqref{eq:formula-Cayley}.

Second and more recent application is to \emph{random sampling}
of combinatorial objects.  Oftentimes, one of the sets has a
much simpler structure which allows (nearly) uniform sampling.
To compare the resulting algorithm with other competing approaches
one then needs a worst case or average-case analysis
of the complexity of the bijection.

Of course, most bijections in the literature are so straightforward that
their analysis is elementary, think of the Pr\"ufer algorithm or
the classical ``plane trees into binary trees'' bijection~\cite{dBM}.
But this is also what makes them efficient.  For example, the bijections
for planar maps are amazing in their elegance, and have some
important applications to statistical physics; we refer to~\cite{Scha}
for an extensive recent survey and numerous references.


Finally, we should mention a number of \emph{perfect sampling}
algorithms, some of which in the right light can also be viewed
as bijections.  These include most notably general techniques
such as \emph{Boltzmann samplers}~\cite{DFLS} (see also~\cite{AD,BFKV}),
and \emph{coupling from the past}~\cite{LPW}.
Note also two beautiful ad hoc algorithms: \emph{Wilson's LERW}~\cite{Wil}
and the \emph{Aldous--Broder algorithm} for sampling uniform spanning trees
in a graph (both of which are highly nontrivial already for $K_n$),
see e.g.~\cite{LPW}.


\smallskip

\subsection{Partition bijections} \label{ss:bij-complexity}
This is a large subject in its own right, with many results and
open problems.  For example, the \emph{Bressoud--Zeilberger
involution}~\cite{BZ} proves Euler's recurrence~\eqref{eq:part-euler-recurrence}.
At the same time, the equation implied by the ADE recurrence~\eqref{eq:part-ADE}
does not yet have a combinatorial proof, and looking for such a proof would
not be advisable.  We refer to~\cite{Pak-psurvey} for an extensive survey.

Let $q(n)$ denote the number of \emph{concave partitions} defined by \ts
$\la_i-\la_{i+1} \ge \la_{i+1} - \la_{i+2}$ for all~$i$.
Then $\{q(n)\}$ can be computed in \poly$(n)$ time.
To see this, recall \emph{Corteel's bijection} between
convex partitions and partitions into triangular numbers
\cite[\href{https://oeis.org/A007294}{A007294}]{OEIS}.  We then have:
$$
\sum_{n=1}^\infty \. q(n) \ts t^n \, = \, \prod_{k=2}^\infty \.
\frac{1}{1-t^{\binom{k}{2}}}\,,
$$
see~\cite{CCH}. This bijection can be described as a linear
transformation that can be computed in polynomial time~\cite{CS,Pak-geom}.
More importantly, the bijections allow random sampling of
concave partitions, leading to their limit shape~\cite{CCH,DP2}.

On the opposite extreme, there is a similar
\emph{Hickerson bijection} between \textsf{s}-partitions
and partitions with $\la_i\ge 2 \la_{i+1}$ for all $i\ge 1$,
see~\cite{CCH,Pak-geom}.  Thus, both sets are equally hard
to count, but somehow this makes the problem more interesting.

The Garsia--Milne celebrated \emph{involution principle}~\cite{GM} combines
the Schur and Sylvester bijections in an iterative manner,
giving a rather complicated bijective proof of the \emph{Rogers--Ramanujan
identity}:
\begin{equation}\label{eq:RR-id}
1\. + \. \sum_{k=1}^\infty \frac{t^{k^2}}{(1-t)(1-t^2)\cdots (1-t^k)}
\, \. = \. \,
\prod_{i=0}^\infty \. \frac{1}{(1-t^{5i+1})(1-t^{5i+4})}\..
\end{equation}
To be precise, they constructed a bijection $\Psi_n:\cP_n \to \cQ_n$,
where $\cP$ is the set of partitions into parts $\la_i \ge \la_{i+1}+2$,
and $\cQ$ is the set of partitions into parts $\pm 1$~mod~$5$.
In~\cite[$\S$8.4.5]{Pak-psurvey} we conjecture that $\Psi_n$ requires \ts
$\exp n^{\Omega(1)}$ \ts iterations in the worst case.  Partial
evidence in favor of this conjecture is our analysis of O'Hara's
bijection in~\cite{Konv-Pak}, with an \ts $\exp\Omega(\sqrt[3]{n})$ \ts
worst-case lower bound.  On the other hand, the iterative proof
in~\cite{BouP} for~\eqref{eq:RR-id} requires only $O(n)$ iterations.

\smallskip

\subsection{Plane partitions and Young tableaux} \label{ss:bij-solid}
Denote by $pp(n)$ the number of \emph{plane} (also called \emph{solid})
partitions.  MacMahon famously proved in 1912 that
$$
\sum_{n=0}^\infty \. pp(n) \ts t^n \, = \,
\prod_{k=1}^\infty \frac{1}{(1-t^k)^k}\.,
$$
which gives a \poly$(n)$ time algorithm for computing $sp(n)$.
This identity follows from a variation on the classical
\emph{Hillman-Grassl} and \emph{RSK} bijections,
see e.g.~\cite[$\S$9.1]{Pak-psurvey}.
Application to sampling of this bijection have been analyzed in~\cite{BFP}.
On the other hand, there is strong evidence that the RSK-based algorithms
cannot be improved.  While we are far from proving this, let us note that
in~\cite{PV} we show linear-time reductions between all major bijections
in the area, so a speedup of one of them implies a speedup of all.

The remarkable \emph{Krattenthaler bijection} allows enumerations of
solid partitions that fit into \ts $[n\times n\times n]$ \ts box~\cite{Kratt}.
This bijection is based on top of the \emph{NPS algorithm}, which has also
been recently analyzed~\cite{NS,SS}. Curiously, there are no analogous
results in $d\ge 4$ dimensions, making counting such $d$-dimensional
partitions an open problem (cf.~\cite{Gov}).

\smallskip

\subsection{Complexity of bijections} \label{ss:bij-exist}
Let us now discuss the questions~$(2')$ in the introduction,
about the nature of bijections \ts $\vp:\cP_n\to \cQ_n$ \ts from
an algorithmic point of view.

Let \ts $|\cP_n|=|\cQ_n|$ and think of $\vp$ as a map.
We require both $\vp$ and $\vp^{-1}$ to be computable in
polynomial time.\footnote{Here we are trying to avoid having
\emph{one-way functions}, which play an important role in
cryptography, but are distracting in this setting.}
If that's all we want, it suffices to
show that $\cP_n$ and~$\cQ_n$ can be enumerated in polynomial
time.  Here by \emph{enumerated} we mean a bijection
$\phi: \cP_n \to \{1,\ldots,|\cP_n|\}$, where both $\phi$
and $\phi^{-1}$ are computable in polynomial time.

For example, the dynamic programming plus \emph{divide-and-conquer}
proves that the sets of partitions $\cP_n$ and $\cQ_n$
on both sides of the Rogers--Ramanujan identity~\eqref{eq:RR-id},
can be enumerated in \poly$(n)$ time.  This gives a bijection \ts
$\vp_n: \cP_n \to \cQ_n$, proving the identity. But since proving
validity of such construction would require prior knowledge of
$|\cP_n|=|\cQ_n|$, from a combinatorial point of view this
bijection is unsatisfactory.

Alternatively, one can think of a bijection as an
algorithm that computes a given map~$\vp_n$ as above in
\poly$(n)$ time.  This is a particularly unfriendly setting,
as one would essentially need to prove new \emph{lower bounds}
in complexity.  Worse, we proved in~\cite{Konv-Pak} that in
some cases O'Hara's algorithm requires superpolynomial time,
while the map given by the algorithm can be computed in
\poly$(n)$ time using \emph{integer programming}.
Since this is the only nice bijective proof of the
\emph{Andrews identities} that we know (see~\cite{Pak-psurvey}),
this suggests that either we don't understand the nature
of these identities or have a very restrictive view of what
constitutes a combinatorial bijection. Or, perhaps, the
complexity approach is simply inapplicable in this
combinatorial setting.

There are other cases of unquestionably successful
bijections which are inferior to other algorithms from the
complexity point of view.  For example, stretching the
definitions a bit, Wilson's LERW algorithm for generating
random (directed) spanning trees uses
exponential time on directed graphs~\cite{Wil}, while a
straightforward algorithm based on the \emph{matrix-tree theorem}
is polynomial, of course.

Finally, even when the bijection is nice and efficient,
it might still have no interesting properties, so the
only application is the proof of the theorem.  One example
is an iterative bijection for the Rogers--Ramanujan
identity~\eqref{eq:RR-id} which is implied by the
proof in~\cite{BouP}.  It is unclear if it respects any
natural statistics which would imply a stronger result.
Thus, it is presented in~\cite{BouP} in the form of a
combinatorial proof to make the underlying algebra clear.

\smallskip

\subsection{Probabilistic/asymptotic approach}
Suppose both sets of combinatorial objects $\cP_n$ and $\cQ_n$
have well-defined \emph{limit shapes} $\pi$ and $\om$,
as $n\to\infty$.
Such limit shapes exist for various families of trees~\cite{Drm},
graphs~\cite{Lov-graph-limits}, partitions~\cite{DP2},
permutations~\cite{H+}, solid partitions~\cite{Oko},
Young tableaux~\cite{Rom}, etc.\footnote{Here the notion of
a ``limit shape'' is used very loosely, as it means very different
things in each case. }
\ts For a sequence $\{\vp_n\}$ of bijections \ts $\vp_n: \cP_n \to \cQ_n$,
one can ask about the \emph{limit bijection} \ts $\Phi:\bcP\to \bcQ$,
defined as \ts
$\lim_{n\to \infty}\vp_n$.  We can then require that~$\Phi$
satisfies certain additional structural properties.
This is the approach taken in~\cite{Pak-nature} to prove the
following result:

\begin{thm}
The Rogers--Ramanujan identity~\eqref{eq:RR-id} has no
\emph{geometric bijection}.
\end{thm}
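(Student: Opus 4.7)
The plan is to follow the probabilistic/asymptotic approach just sketched, working with the limit shapes on both sides of~\eqref{eq:RR-id}. Let $\cP_n$ denote the set of partitions of $n$ with $\la_i - \la_{i+1} \ge 2$, and $\cQ_n$ the set of partitions of $n$ into parts $\equiv \pm 1 \pmod 5$. First I would pin down the notion of a \emph{geometric bijection} from \cite{Pak-nature}: after rescaling every partition of $n$ by $1/\sqrt{n}$, a sequence of bijections $\vp_n: \cP_n \to \cQ_n$ is declared geometric if it converges (in the natural sense on rescaled Young diagrams) to a piecewise linear map $\HH: \bcP \to \bcQ$ built from finitely many affine pieces with rational coefficients. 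This captures classical partition bijections such as Sylvester's and Glaisher's, where the limit map is explicitly a piecewise affine transformation of the Young diagram.

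Next, I would compute the two limit shapes. Both $\cP_n$ and $\cQ_n$ are ``partitions into a prescribed set'' families, so a standard variational argument (as in \cite{DP2}) produces explicit limit curves: for $\cP_n$ the profile is governed by the Rogers--Ramanujan series on the left of~\eqref{eq:RR-id}, while for $\cQ_n$ it is governed by the Euler product on the right. From these I would read off the asymptotic aspect ratios, namely the limits
\[
\al_{\cP} \. = \. \lim_{n\to\infty} \frac{\ell(\la)}{\sqrt{n}}, \qquad \be_{\cP} \. = \. \lim_{n\to\infty} \frac{\la_1}{\sqrt{n}}\ts,
\]
and similarly $\al_{\cQ}, \be_{\cQ}$ for $\cQ_n$. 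These constants are roots of explicit transcendental equations coming from the two generating functions, and a direct computation shows that the pairs $(\al_{\cP},\be_{\cP})$ and $(\al_{\cQ},\be_{\cQ})$ do not coincide.

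The key step is then to observe that any piecewise affine limit map $\HH$ with rational coefficients necessarily respects these invariants, because both $\ell(\la)$ and $\la_1$ are read off from linear functionals of the rescaled profile -- they are just the horizontal and vertical extents of the limiting diagram. Thus $\HH(\bcP)$ would have to match $\bcQ$ in aspect ratio, contradicting the numerical computation above, and no geometric bijection can exist.

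The main obstacle is the definitional step: making ``geometric'' precise enough that the limiting affine structure really does force the two invariants to match, yet broad enough to rule out iterated or branching constructions assembled from affine pieces on polyhedral decompositions of the partition polytope. Once this is settled, the rest of the argument reduces to asymptotic analysis of the two sides of the Rogers--Ramanujan identity, for which the tools of Section on limit shapes apply directly.
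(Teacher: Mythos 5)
Your overall strategy matches the paper's: define geometric bijections so that their limits act piecewise linearly on the rescaled diagrams (``asymptotic stability''), compute the limit shapes of the two sides of~\eqref{eq:RR-id}, and derive a contradiction. But the invariant you use to get the contradiction does not work. You claim that a piecewise affine limit map $\HH$ ``necessarily respects'' the horizontal and vertical extents $\lim \la_1/\sqrt{n}$ and $\lim \ell(\la)/\sqrt{n}$, because these are read off from the bounding box of the limiting diagram. That is false for exactly the class of maps under consideration: conjugation $(x,y)\mapsto(y,x)$ is in $\GL(2,\zz)$ and swaps the two extents, and shear maps such as $(x,y)\mapsto(x+y,\ts y)$ change them arbitrarily. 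Indeed, the classical bijections that the definition of ``geometric'' is designed to capture (Sylvester's, Glaisher's, the staircase shift $\la_i\mapsto\la_i-2(k-i)$ relating the left side of~\eqref{eq:RR-id} to ordinary partitions) all alter the aspect ratio drastically. So $\HH(\bcP)$ having a different bounding box from $\bcQ$ is no obstruction, and your contradiction evaporates. There is also a secondary problem: for $\cQ_n$ (parts $\equiv\pm1$ mod~$5$) the number of parts grows like $\sqrt{n}\ts\log n$, so your $\al_{\cQ}$ is not even finite.

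The paper's argument replaces your bounding-box comparison with a genuinely piecewise-linear-invariant comparison of the two limit \emph{curves} themselves: one first proves that limits of compositions of piecewise $\GL(2,\zz)$ maps are asymptotically stable, and then invokes the explicit analytic descriptions of the two limit shapes $\bcP$ and $\bcQ$ from~\cite{DP2} to show that no piecewise linear map can carry one curve onto the other (roughly, a piecewise linear image of one curve satisfies a transformed functional equation incompatible with the other). To repair your proof you would need to replace the aspect-ratio step with such a curve-level incompatibility -- some quantity computed from the full profile that is provably invariant, or transforms controllably, under all piecewise $\GL(2,\zz)$ maps. As written, the key step fails.
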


Here the \emph{geometric bijections} are defined as compositions
of certain piecewise $\GL(2,\zz)$ maps acting on Ferrers diagrams,
which are viewed as subsets of $\zz^2$.  We first prove that the limits of
such bijections are \emph{asymptotically stable}, i.e. act
piecewise linearly on the limit shapes.  The rest of the proof
follows from existing results on the limit shapes~$\pi$ and~$\om$ on
both sides of~\eqref{eq:RR-id}, which forbid a piecewise linear map
$\Phi: \pi\to \om$, see~\cite{DP2}.

\smallskip

The next story is incomplete, yet the outlines are becoming clear.
Let \ts $\ASM(n)$ \ts be the number of \emph{alternating sign matrices}
of order~$n$, defined as the number of $n\times n$ matrices where
every row has entries in $\{0,\pm 1\}$, with row and column
sums equal to~$1$, and all signs alternate in each row and column.
Let \ts $\SLT(n)$ \ts be the number of the \emph{fully symmetric
lozenge tilings}, defined as lozenge tilings of the regular
$2n$-hexagon with the full group of symmetries~$D_6$.  Such tilings
are in easy bijection with solid partitions that fit into a \ts
$[2n\times 2n \times 2n]$ \ts box, have full group of symmetries~$S_3$,
and are self-complementary within the box (cf.~$\S$\ref{ss:bij-solid}).
Finally, let $\TSPP(n)$ be the
number of \emph{triangular shifted plane partitions} defined as
plane partitions \ts $(b_{ij})_{1\le i \le j}$ \ts of shifted shape
$(n - 1, n - 2, \ldots, 1)$, and entries  $n - i \le b_{ij} \le n$ for
$1 \le i \le j \le n - 1$.

The following identity is justly celebrated:
\begin{equation}\label{eq:ASM}
\ASM(n) \, = \, \SLT(n)\, = \,\TSPP(n)\, = \,
\frac{1! \. 4! \. 7! \. \cdots \. (3n-2)!}{n! \. (n+1)! \. \cdots \. (2n-1)!}
\end{equation}
See~\cite{Bre} for the history of the problem and
\cite[\href{https://oeis.org/A005130}{A005130}]{OEIS} for
further references.

Now, the second equality is known to have a bijective proof~\cite{MRR}.
The third equality is a major open problem (see below).

\begin{claim}
The equality \ts $\ASM(n) = \SLT(n)$ \ts has no
\emph{geometric bijection}.
\end{claim}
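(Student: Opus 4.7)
The plan is to mimic the proof of the Rogers--Ramanujan analogue from \cite{Pak-nature}: use the well-understood limit shapes on both sides, apply the asymptotic-stability property of geometric bijections to induce a piecewise linear map between these limit shapes, and then derive a contradiction from the analytic nature of the arctic curves and the interior density profiles.

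First I would fix lattice encodings. An ASM of order $n$ is encoded through its corner / monotone-triangle representation as a subset of $\zz^2$ rescaled to $[0,1]^2$. A fully symmetric lozenge tiling of the regular $2n$-hexagon is encoded via its height function as a subset of $\zz^2$, and by the $D_6$-symmetry it suffices to work on a single fundamental triangle. A geometric bijection $\vp_n \colon \ASM_n \to \SLT_n$ in the sense of \cite{Pak-nature} would then be a composition of piecewise $\GL(2,\zz)$ maps between these rescaled lattice subsets.

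Next I would invoke the classical limit-shape theorems. For uniformly random ASMs, the arctic circle theorem of Colomo--Pronko (of Cohn--Larsen--Propp type) identifies the limit shape $\pi_{\ASM}$, with an explicit analytic density profile bounded by the inscribed circle in $[0,1]^2$. For random lozenge tilings of a regular hexagon, the Cohn--Larsen--Propp arctic theorem together with the Kenyon--Okounkov variational principle gives the limit shape $\pi_{\SLT}$, bounded by an inscribed analytic conic, with an explicit interior density; the unique $D_6$-symmetric minimizer is its restriction to the fundamental triangle. By the asymptotic stability established in \cite{Pak-nature}, any limit of piecewise $\GL(2,\zz)$ maps acts piecewise linearly on limit shapes, so a geometric bijection $\vp_n$ would induce a piecewise linear map $\Phi \colon \pi_{\ASM} \to \pi_{\SLT}$.

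Finally I would derive a contradiction. On each linearity piece, $\Phi$ sends an arc of the ASM arctic circle to an arc of a conic; matching this to the corresponding arc of the FSLT arctic conic on an open set forces $\Phi$ to be globally affine on that piece. A finite union of affine pieces must then reproduce the two interior density profiles, which are distinct explicit analytic (elliptic/trigonometric-integral) functions, and therefore cannot be related by a piecewise affine transformation. The main obstacle will be precisely this last rigidity step: unlike the R--R case in \cite{DP2}, where the two limit curves have manifestly incompatible boundary shapes, here both arctic curves are conics that a single affine map could in principle match, so the obstruction must be extracted from the interior. Establishing an interior-rigidity lemma for the ASM vs.\ FSLT densities -- showing that no piecewise affine transformation identifies the two -- is the technical core; once it is in place, the claim follows exactly as in \cite{Pak-nature}.
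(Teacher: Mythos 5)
Your overall strategy (limit shapes plus the asymptotic-stability property of piecewise $\GL(2,\zz)$ maps from~\cite{Pak-nature}) is the one the paper has in mind, but you have the key fact backwards, and this sends you down a much harder and unnecessary road. The arctic curve of a uniformly random ASM is \emph{not} the inscribed circle: the Colomo--Pronko conjecture~\cite{CP} describes it by a rather involved \emph{sextic} equation (the circle arises only for the $2$-enumeration of ASMs, i.e.\ for Aztec-diamond dominoes, not for the uniform measure relevant to $\ASM(n)$). The $\SLT$ frozen boundary, on the other hand, is (conjecturally) a circle. That mismatch of algebraic degree \emph{is} the intended obstruction: a piecewise affine map sends an arc of a circle to an arc of a conic and can never produce an arc of an irreducible sextic on an open set, so the induced limit map $\Phi$ cannot exist. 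By misidentifying the ASM curve as a circle you conclude that the boundary argument fails and then try to extract rigidity from the interior density profiles; that ``interior-rigidity lemma'' is not needed, and as you yourself note it is the unproven technical core of your proposal, so as written the proposal has a genuine gap precisely where the paper's argument has none.

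Two further cautions. First, both frozen-region descriptions were conjectural at the time of the paper (the ASM sextic is a conjecture of~\cite{CP}, and the $\SLT$ circle is known only conditionally, as the paper's footnote records), so you should not cite them as established theorems of Cohn--Larsen--Propp type; any argument here is conditional on those conjectures. Second, the paper is explicit that the notion of ``geometric bijection'' has only been formalized for partitions in~\cite{Pak-nature}; for ASMs versus symmetric lozenge tilings one must first fix lattice encodings and a class of admissible maps, and the Claim is really the assertion that \emph{any} reasonable such definition is ruled out by the incompatibility of the two frozen boundaries. Your first paragraph gestures at such encodings, which is good, but the statement you are proving is a meta-claim about all natural definitions, not a theorem about one fixed definition.
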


We now know (conjecturally) what the \emph{frozen regions} in each
case are: the circle for $\SLT$s and a rather involved sextic equation
for~$\ASM$s. The latter is an ingenuous conjecture in~\cite{CP}
(see also~\cite{CSp}), while the
former is a natural conjecture about the \emph{Arctic Circle} which remains
when the symmetries are introduced (cf.~\cite{Panova}).\footnote{While the
frozen region hasn't been established for $\SLT$s, it is known that if
exists it must be a circle (Greta Panova, personal communication).}
We are not sure in this case what do we mean by a ``geometric bijection''.
But any natural definition should imply that the two shapes are incompatible.  It would
be interesting to formalize this even before both frozen regions
are fully established.

\smallskip

There is another aspect of this asymptotic approach, which allows
us to distinguish between different equinumerous collections of
combinatorial objects with respect to some (transitive) notions of a
``good'' (canonical) bijection, and thus divide them into equivalence
classes.  This method would allow us to understand the nature of these
families and ignore superficial differences within the same class.

The prototypical example of this is a collection of over 200 objects enumerating
Catalan numbers~\cite{Sta-cat}, but there are other large such collections:
for Motzkin numbers, Schr\"oder numbers, Euler numbers~\eqref{eq:formula-Euler},
etc.  A natural approach would be to use the symmetry properties or the topology,
but such examples are rare (see, however, \cite{AST}
and~\cite{West} for two ``canonical'' bijections between Catalan objects).

In~\cite{MP}, we studied the limit averages of permutation matrices
corresponding to $\Av_n(\cf)$.  We showed that the limit surfaces
corresponding to $\Av_n(123)$ and  $\Av_n(213)$ are quite different,
even though their sizes are Catalan numbers (see also~\cite{HRS,Madras-P}).
This partly explains a well known phenomenon: there are at least \emph{nine}(!)
different explicit bijections between these two families, see~\cite{Kit},
each with its own special properties.  Evidently, there is simply no ``canonical''
bijection in this case.  See also~\cite{Ald,Dokos-P} for the asymptotic analysis
of two other interesting Catalan families.

\smallskip

\subsection{Open problems on bijections}\label{ss:bij-op}
In theory, having a direct bijection should be an exception, not
a rule, since in most cases the algebraic 
tools are simply more powerful.
In practice, combinatorialists tend to be fascinated with basic
structures reflecting certain most fundamental symmetries, where the
bijections are abound.  There are, however, a few notable examples
where the bijections have been sought for years, sometimes
for decades, with little hope of success.  Below is a very short
list from the many remarkable identities.

\smallskip

\nin
$(1)$ \. {\it Dyson's rank problem}. \ts Prove bijectively:
$p_0(5k-1)=p_1(5k-1)= p_2(5k-1)$.
Here $p_i(n)$ is the number of partitions $\la \vdash n$ such that
$\la_1-\la_1'=i$~mod~$5$, see~\cite{Dyson}.
Note that asymptotic methods are inapplicable here (for the purposes
of proving non-existence of such bijection), but there is
an elegant algebraic proof~\cite{GKS}.

\smallskip

\nin
$(2)$ \. {\it Triangular shifted plane partition}. \ts
Prove bijectively the product formula~\eqref{eq:ASM} for~$\TSPP(n)$.
We refer to~\cite{Bre,Kratt-PP} for more on the history and the context.

\smallskip

\nin
$(3)$ \. {\it Symmetry of $q,t$-Catalan numbers.} \ts Prove bijectively:
$$
F_n(x,y) \. = \. F_n(y,x), \quad \text{where} \ \
F_n(x,y) \. = \. \sum_{\pi \in \Dyck(n)} \. x^{\area(\pi)}\ts y^{\dinv(\pi)}
$$
where $\Dyck(n)$ is the set of \emph{Dyck paths} of length~$2n$ and two statistics
are defined in~\cite{Hag}.
\smallskip

\nin
$(4)$ \. {\it Linear extensions of Selberg posets}:\footnote{We introduced
Selberg posets in a 2003 NSF proposal, see solution to~\cite[Exc.~3.55]{Stanley-EC2}.}
Let $\rP(a,b,c)$ be a poset defined as in Figure~\ref{f:selberg}.  Let $e(\rQ)$ denote
the number of \emph{linear extensions} of the poset~$\rQ$. Prove bijectively:
$$e\bigl(\rP(a,b,c)\bigr) \, = \,
\frac{(a+c)!\, (b+c)! \, (2a+2b+2c+1)!}
{a!\,b!\,c!\,(a+b+c)! \,(a+b+2c+1)!}\,.
$$
See Ex.~1.11 and~3.55 in~\cite{Stanley-EC2} for generalization
and connection to the \emph{Selberg integral}.

{\small
\begin{figure}[hbt]
 \begin{center}
   \includegraphics[height=3.3cm]{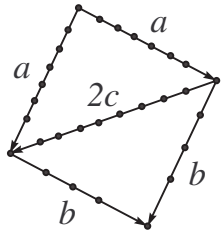}
   \caption{Selberg poset~$P(6,3,4)$.}
   \label{f:selberg}
 \end{center}
\end{figure}
}

\smallskip

\nin
$(5)$ \. {\it Standard Young tableaux of skew shape.}  Prove bijectively:
$$
\#\SYT\bigl((3a)^{2a}(2a)^a/a^a\bigr) \, = \, \frac{(7a^2)! \,\. \Phi(a)^5 \,\Phi(5a)}{\Phi(2a)^2\, \Phi(6a)}\,,
$$
where \ts $\Phi(n) = 1! \cdot 2! \ts \cdots \ts (n-1)!$ \ts is the \emph{superfactorial}
\cite[\href{https://oeis.org/A000178}{A000178}]{OEIS}.   This is a special case of two large
series of shapes recently discovered in~\cite{KO} and~\cite{MPP}.  One can also view $\SYT(\la)$
as linear extensions of the corresponding poset, and in this case it also can be derived
from the Selberg integral~\cite{KO}.

\smallskip

\begin{rem}{\rm
Let us note that posets in $(4)$ and $(5)$ have \emph{dimension two},
meaning they are defined by a set of points \ts $\bigl\{(x_1,y_1), \ldots,(x_n,y_n)\bigr\}\ssu \rr^2$,
with \ts $(x_i,y_i) \preccurlyeq (x_j,y_j)$ \ts when $x_i\le x_j$ and $y_i\le y_j$.
The formulas above allow computation of the number of linear extensions in
polynomial time.  Perhaps surprisingly, many other sequences in this paper also
count linear extensions of $2$-dimensional posets: binomial coefficients~$\binom{n}{k}$,
Catalan numbers~$C_n$, Fibonacci numbers~$F_n$, Euler numbers~$E_n$, etc.\
(see e.g.~\cite{MPP-monthly,MPP}).  

In fact, Feit's determinant formula~\cite{Feit} for $\SYT(\la/\mu)$ 
implies that the number of linear extensions can be computed in poly$(n)$ time 
for all skew shapes, where $n=|\la/\mu|$.  This raises a question if $e(\rP)$ can
be computed in polynomial time for other ``natural'' families of shapes, e.g.\ 
\emph{cross shapes} \ts $\rCr(a,b)$ defined as $[b\times b]$ minus four 
corner $[a\times a]$ squares, $b >2a$ (cf.~\cite{AdR}).  Finally, 
let us mention that computing $e(\rP)$ is $\SP$-complete for general 
$2$-dimensional posets~\cite{Dit-P}.
}
\end{rem}

\medskip


\section{Combinatorial interpretations}\label{s:inter}

\subsection{Complexity setting}  \label{ss:inter-complexty}
Let $\cA= \cup_n \cA_n$ be a family of \emph{combinatorial objects},
which means that membership in~$\ca_n$ can be decided in
\poly$(n)$ time (see~$\S$\ref{ss:formula-comp}).  Let
\ts $f: \cA\to \nn$ \ts be a computable function,
which we assume to be at most exponential: \ts
$f(X) \le e^{C\ts n^a}$ \ts for all $X\in \cA_n$,
and some $C,a>0$.  Let
$$\cP \. = \bigcup_{X\in \cA} \ts \cP_X
$$
be a family  of combinatorial objects parameterized by~$\cA$,
such that \ts $|\cP_X|=f(X)$.  We then say that $\cP$ is a
\emph{combinatorial interpretation} of~$f$.

In the language of computational complexity, if \ts $f$ \ts has
a combinatorial interpretation, then the problem of
computing $f(X)$ is in $\SP$.  Similarly, suppose $f_1$ and
$f_2$ have combinatorial interpretations, and that \ts
$f = f_1 - f_2$.  Then the problem of computing
$f(X)$ is in the complexity class~$\GapP$, defined exactly
to be the class of differences of two~$\SP$ functions.
It is typical in combinatorics to consider a nonnegative $\GapP$~function
and ask if it is in~$\SP$ (see examples below).


\smallskip

\subsection{Kronecker coefficients}  \label{ss:inter-Kron}
One especially notable case of a combinatorial interpretation
is the problem of computing the
\emph{Kronecker coefficients} of the symmetric group, which are defined by
$$\chi^\la \cdot \chi^\mu \, = \, \sum_{\nu\vdash n} \,
g(\la,\mu,\nu) \. \chi^\nu, \quad \text{where} \ \ \la,\mu \vdash n\ts,
$$
and $\chi^\la, \ts\chi^\mu, \ts\chi^\nu$ \ts are irreducible characters
of~$S_n$.

\begin{op}\label{op:kron}
Find a combinatorial interpretation for the
Kronecker coefficients
$$\bigl\{\ts g(\la,\mu,\nu), \, \la,\mu, \nu\vdash n\ts \bigr\}.$$
\end{op}

This problem was introduced by F.~D.~Murnaghan in~1938 and
it has been studied extensively in recent years, both as a problem in
algebraic combinatorics and in connection to the geometric
complexity theory.  We refer to~\cite{Bla,Pak-Panova-complexity} for
details and further references.

Now, it was shown by B\"urgisser and Ikenmeyer~\cite{BI} that
computing \ts $g(\la,\mu,\nu)$ \ts is in $\GapP$.  An elementary
proof of this is given in~\cite{Pak-Panova-complexity}:
$$
g(\la,\mu,\nu) \, =  \. \sum_{\si,\om,\pi \in S_\ell} \. \sgn(\si\om\pi) \cdot
\rCT(\la-\si+\bone,\mu-\om+\bone,\nu-\pi+\bone)\.,
$$
where $\bone=(1,\ldots,1)$, the number of rows $\ell(\la),\ell(\mu),\ell(\nu)\le \ell$,
and \ts $\rCT(\al,\be,\ga)$ \ts is the number of $3$-dimensional \emph{contingency tables}
with $2$-dimensional sums $\al,\be,\ga$.  The problem is known to be $\SP$-hard~\cite{BI}.
In a recent remarkable development, it was shown in~\cite{IMW} that the \emph{positivity}
decision problem \ts $g(\la,\mu,\nu)\.{}>?\,0$ \ts is $\NP$-hard, so a combinatorial
interpretation would imply that this problem is $\NP$-complete.

\medskip

\subsection{Hamiltonian cycles in cubic graphs}  \label{ss:inter-ham}
The Kronecker coefficients problem discussed above is fundamentally an
issue of \emph{constructive subtraction} encapsulated by the $\GapP$
complexity class.  There is a similar \emph{constructive division} issue,
which is rare in complexity theory, but routine in combinatorics, where
many proofs are based on \emph{double counting}.  For example, both Joyal's
and Pitman's proofs of the Cayley formula~\eqref{eq:formula-Cayley} are by
double counting (see the first and the fourth proof in~\cite[Ch.~32]{AZ}).

\begin{thm}[Smith and Tutte~\cite{Tutte-ham}]  \label{t:Tutte-ham}
Let $e$ be an edge in a cubic graph~$G$.
Then the number $N_e(G)$ of Hamiltonian cycles in $G$ containing~$e$,
is always even.
\end{thm}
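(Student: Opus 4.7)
The plan is to prove this via Thomason's lollipop argument, producing a simple auxiliary graph whose degree-one vertices are in bijection with the Hamiltonian cycles through $e$, and then invoking the handshake lemma.

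First, I will fix $e = xy$ and consider the set $\mathcal{P}$ of Hamiltonian paths of $G$ of the form $P = (v_0, v_1, v_2, \ldots, v_n)$ with $v_0 = x$ and $v_1 = y$, i.e. starting with the prescribed edge $e$. Given such a $P$ with free endpoint $v_n$ and any vertex $v_i$ on $P$ with $1 \le i \le n-2$ such that $\{v_i, v_n\}$ is an edge of $G$, the \emph{rotation at $v_i$} produces a new path in $\mathcal{P}$, namely $(v_0, v_1, \ldots, v_i, v_n, v_{n-1}, \ldots, v_{i+1})$; note this preserves the initial edge $e$ precisely because we forbid $i = 0$. Using these moves I define a graph $H$ on vertex set $\mathcal{P}$ by joining $P$ to each path obtained from it by a single rotation.

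Next I will compute the degrees in $H$. Since $G$ is cubic, the endpoint $v_n$ has exactly three neighbors, one of which is $v_{n-1}$; the other two lie somewhere on $\{v_0, v_1, \ldots, v_{n-2}\}$. If neither of these two is $v_0 = x$, both give legitimate rotations and $P$ has degree $2$ in $H$. If one of them is $x$, then only the other rotation is legal, and $P$ has degree $1$ in $H$. Thus every vertex of $H$ has degree $1$ or $2$.

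The key observation is then the bijection between degree-one vertices and the quantity $N_e(G)$. A path $P \in \mathcal{P}$ has degree $1$ in $H$ exactly when its free endpoint $v_n$ is adjacent to $x$, in which case appending the edge $\{v_n, x\}$ yields a Hamiltonian cycle containing $e$. Conversely, any Hamiltonian cycle through $e$ can be written uniquely as $x, y, v_2, \ldots, v_n, x$ (reading around so that the first edge is $e$), and removing the chord $\{v_n, x\}$ recovers a degree-one element of $\mathcal{P}$. Hence $N_e(G)$ equals the number of degree-one vertices of $H$, which is even by the handshake lemma applied to $H$.

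The argument is short and the only delicate point is the bookkeeping in Step 2: one must verify carefully that the rotations are well-defined on $\mathcal{P}$ (i.e. they preserve the initial edge $e$), that distinct rotations produce distinct paths, and that the rotation from $P$ via $v_i$ is reciprocated by a rotation back from the resulting path (so $H$ really is an undirected simple graph). Once these verifications are in place, the parity conclusion follows immediately, and no further structural hypotheses on $G$ (planarity, connectivity, etc.) are needed.
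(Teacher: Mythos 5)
Your proof is correct and is essentially the paper's own argument: both rest on P\'osa rotations of Hamiltonian paths that begin with the edge $e$ in a cubic graph. The only (cosmetic) difference is that you conclude statically via the handshake lemma applied to the auxiliary rotation graph $H$, whereas the paper packages the same structure dynamically as the Price--Thomason algorithm, i.e.\ the pairing obtained by walking along the degree-$2$ vertices of $H$ from one degree-$1$ vertex to another.
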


The original proof is an elegant double counting argument modulo~$2$.
This suggests the following open problem:\footnote{This problem
was suggested to us by Peter Shor  \ts
\href{https://cstheory.stackexchange.com/questions/33508}{https://tinyurl.com/y7yafneh}.}

\begin{op}
Find a combinatorial interpretation for $N_e(G)/2$.
\end{op}

For example, when $G$ is uniquely $3$-edge-colorable, we have $N_e(G)=2$
and the problem is easy.  The following constructive proof of
Theorem~\ref{t:Tutte-ham} is due to Price~\cite{Price} and Thomason~\cite{Tho}:

\begin{proof}[Proof of Theorem~\ref{t:Tutte-ham}]
Consider the set of Hamiltonian paths $P$ in $G$ with $e$ as the initial
step.  Initially, let $P$ be a Hamiltonian cycle in~$G$ minus edge $e_1$
adjacent to~$e$.  The endpoint of~$P$ that is not in~$e$ has two
adjacent edges $e_1, e_2\notin P$.  Extend $P$ by $e_2$ and
remove a uniquely determined edge which creates a new path~$P'$.
This transformation is called \emph{P\'osa rotation}.  Repeat Po\'sa
rotations until one would be forced to remove~$e$, at which point you get
a new Hamiltonian cycle. This defines a pairing of all Hamiltonian cycles.
\end{proof}

If the \emph{Price--Thomason algorithm} defined in the proof above were
polynomial, we would have a solution for the open problem.  Indeed,
for a Hamiltonian cycle in $G$ we accept it if and only if it is
lexicographically smaller than the Hamiltonian cycle it is
paired with by the algorithm.  Unfortunately, the algorithm is
exponential~\cite{Kraw} (see also~\cite{Cam}).

\smallskip

Note the strong similarities with the \emph{involution principle}
(see~$\S$\ref{ss:bij-complexity}) and the \emph{division by two}~\cite{DC}
which are all based on the same principle.  The problem and the proof
above are variations on {\sc Another Hamiltonian Cycle} problem,
which is in the complexity class~$\PPA$ conjectured to be outside of~$\PP$,
see~\cite[Ex.~10.7]{Pap} and~\cite[$\S$6.7.2]{MM}.  This suggests that
the open problem cannot be resolved in full generality (in a mathematical
sense), but perhaps some

\smallskip

We should mention that in some cases the constructive division problem
has been resolved.  Most notably, the \emph{Ramanujan congruences}
modulo~5 and~7, see~\cite[$\S$19.12]{HW}, were combinatorially
interpreted by Dyson, with a proof of his conjectural combinatorial
interpretation given later by Atkin and Swinnerton-Dyer.
See \emph{Dyson's rank problem} in~$\S$\ref{ss:bij-op} and an
alternative statistics called \emph{crank}, introduced
in~\cite{AG-crank} for congruences modulo~11, leading to many
generalizations~\cite{Mah}.

\medskip

\subsection{Combinatorial interpretation of sequences}  \label{ss:inter-seq}
Let $\{a_n\}$ be a nonnegative integer sequence that has a Wilfian formula
of type~$\WI$, i.e.\ can be computed in \poly$(n)$ time.  Then $a_n$ has
a trivial combinatorial interpretation: integers $\{1,\ldots,a_n\}$.  This
means that for sequences we need a different notion.

Let \ts $\cP=\cup\cP_n$ \ts be the set of combinatorial objects, s.t.
$|\cP_n|=a_n$ for all~$n$.  We say that $\cP$ \emph{gives a combinatorial
interpretation} for $\{a_n\}$ of type:

\smallskip
\quad $\CI$ \ts if membership in $\cP_n$ can be decided in $O(\log n)$ space.

\smallskip

\nin
For example, the $0$-$1$ ballot sequences of length $2n$ with $n$ ones
and $n$ zeroes give a combinatorial interpretation of the Catalan
number~$C_n$.  Indeed, to verify  membership it suffices to have two
counters: \ts $\#$1's and $\#0$'s, which require $O(\log n)$ space.

Note that a combinatorial interpretation of type~$\CI$ is a property
not only of the objects in~$\cP$, but also of their presentation.
For example, permutations $\si\in S_n$ in their natural representation
are not of type~$\CI$.  To make them of type~$\CI$ one can
represent them with a permutation matrix.

\medskip

\subsection{Super Catalan numbers}  \label{ss:inter-super}
The strange case of \emph{super Catalan numbers} shows both
advantage and disadvantage of the complexity approach.  They
are defined as follows:
$$
S(m,n) \. := \, \frac{(2m)! \. (2n)!}{m!\. n!\. (m+n)!}\..
$$
These were defined by E.~Catalan in 1874, who discovered that
they are integers.  This follows easily from the divisibility
properties of the factorials as well as the
\emph{von Szily identity} (1894):
$$
S(m,n) \, = \, \sum_k \. (-1)^k \ts \binom{2m}{m+k} \ts \binom{2n}{n+k}\ts.
$$
Note that $S(1,n)/2=C_n$ is the usual Catalan number.

The problem of finding a combinatorial interpretation was posed by Gessel
in~\cite{Ges-super}.  Such an interpretation is known for $m\le 3$ and
for $|m-n|\le 3$, see~\cite{CW,GX} (see also~\cite{AG,Scha-super}).
Yet, from a computational complexity point of view, Gessel also
suggested the proof idea.

\begin{thm}\label{t:super}
The numbers $S(m,n)$ have a combinatorial interpretation of type~$\CI$.
\end{thm}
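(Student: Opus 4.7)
Following Gessel's suggestion, the plan is to turn the von Szily identity
\[
S(m, n) \. = \. \sum_k (-1)^k \binom{2m}{m+k}\binom{2n}{n+k}
\]
into a type~$\CI$ combinatorial interpretation via a logspace-computable sign-reversing involution. First, define
\[
\mathcal{X}_{m,n} \. := \. \bigl\{(A, B) : A \subseteq [2m],\ B \subseteq [2n],\ |A| - m = |B| - n\bigr\},
\]
with each pair carrying the sign $(-1)^{|A| - m}$; von Szily asserts that the signed cardinality of $\mathcal{X}_{m,n}$ is $S(m, n)$. Second, construct an involution $\phi : \mathcal{X}_{m,n} \to \mathcal{X}_{m,n}$ that (i) reverses signs off its fixed-point set, (ii) has only positively-signed fixed points, and (iii) is computable in $O(\log(m+n))$ space; then $\cP_{m,n} := \mathrm{Fix}(\phi)$ is a combinatorial interpretation of $S(m,n)$ of type $\CI$, for to decide ``$(A, B) \in \cP_{m,n}$'' one simulates $\phi(A, B)$ and checks that it equals $(A, B)$ with positive sign.

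The involution $\phi$ is built in two stages. Assume without loss of generality that $m \le n$. Stage one scans $i = 1, \ldots, 2m$ for the smallest index with $\bone[i \in A] = \bone[i \in B]$ and, if any, toggles $i$ in both $A$ and $B$; this preserves the size constraint, flips the sign, is self-inverse, and uses only $O(\log(m+n))$-bit counters. Parametrize the stage-one fixed points by $(A, C)$, where $A \subseteq [2m]$ and $C := B \cap [2m+1, 2n]$; a short calculation gives $|C| = n - m + 2k$ with $k = |A| - m$, so the $k=0$ fixed points contribute $\binom{2m}{m}\binom{2n-2m}{n-m}$ positive terms while the $k \neq 0$ fixed points still alternate in sign. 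Stage two operates on these fixed points via a coordinated ``triple toggle'': scan the pairs $\{2m+2j-1,\ 2m+2j\} \subseteq [2m+1, 2n]$ for the first such pair both of whose elements lie in $C$ or both of whose elements lie outside $C$, and, if one is found, simultaneously toggle those two elements in $C$ together with the element $1$ in $A$, in the unique direction that maintains the identity $|C| = n - m + 2k$. This flips $k$ by $\pm 1$ and so reverses the sign.

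The main obstacle I anticipate is stage two: verifying that the triple toggle is a \emph{genuine} sign-reversing involution whose only fixed points satisfy $k = 0$. The subtlety is that the required direction of the toggle inside $C$ must be compatible with the forced direction of $\Delta|A|$ (dictated by whether or not $1 \in A$), and this compatibility can fail when the first applicable block of $C$ happens to have the ``wrong'' configuration; handling these cases cleanly may require relocating the single-coordinate toggle in $A$ to another canonical position, or skipping incompatible blocks of $C$ until a compatible one is reached, and then arguing that the resulting rule is self-inverse. Once this is arranged, a short induction on $n - m$ (with base case $m = n$, where stage one alone suffices because its fixed points automatically force $k = 0$ and number exactly $\binom{2m}{m} = S(m,m)$) shows $|\cP_{m,n}| = S(m,n)$; since each stage requires only a linear scan using a constant number of $O(\log(m+n))$-bit counters, the composed $\phi$ is logspace-computable, and membership in $\cP_{m,n}$ is therefore decidable in $O(\log(m+n))$ space by direct simulation, completing the proof.
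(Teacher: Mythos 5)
There is a genuine gap, and it sits exactly where you flagged it: stage two. Two separate problems arise. First, the ``triple toggle'' is not well defined as stated: when the first monochromatic block of $C$ forces $|C|$ to drop by $2$ (hence $k$ by $1$, hence $|A|$ by $1$) but $1\notin A$, or forces $|C|$ to grow by $2$ but $1\in A$, there is no legal move; the suggested patches (relocating the toggle in $A$, skipping blocks) alter which block is ``first'' after the move and so threaten self-inverseness, and you give no argument that any repair works. Second, and more fatally, the intended fixed-point set has the wrong cardinality, so no repair of this particular rule can succeed. After stage one the signed sum is $\sum_k(-1)^k\binom{2m}{m+k}\binom{2(n-m)}{n-m+2k}$. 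Take $m=1$, $n=3$: then $S(1,3)=10$, there are $\binom{2}{1}\binom{4}{2}=12$ positive ($k=0$) objects and $2$ negative ($k=\pm1$) objects. A correct sign-reversing involution must pair the $2$ negatives with $2$ positives and fix the remaining $10$. Your rule instead moves every $k=0$ object possessing a monochromatic block --- that is $4$ of the $12$ positives --- against only $2$ negatives, so it cannot be an involution, and its fixed points number $2^{\ts n-m}\binom{2m}{m}=8\neq 10$. Producing an explicit sign-reversing involution for the von Szily identity (let alone one computable in logspace) is essentially the hard open problem about $S(m,n)$ in a strong form, not a routine step.

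The paper avoids signs altogether. It starts from Gessel's subtraction-free recurrence~\eqref{eq:super-ges}, which together with the symmetry $S(m,n)=S(n,m)$ and the initial condition $S(0,0)=1$ expresses $S(m,n)$ by iterated positive sums and doublings; a combinatorial object is then the trace of this recursion, namely a concatenation of $0$--$1$ words (one per recursive step, each of length at most $2\ell$) of total length $O(n^2\log n)$, and the validity of each step is checkable with $O(\log m+\log n)$-bit counters, giving a type~\CI\ interpretation directly. Your stage-one reduction is correct and clean, but to salvage the involution route you would need a genuinely new cancellation scheme on the residual sum $\sum_k(-1)^k\binom{2m}{m+k}\binom{2(n-m)}{n-m+2k}$ that also consumes some of the $k=0$ objects; switching to the paper's positive recurrence is the far easier path.
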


\begin{proof}
Following Gessel~\cite{Ges-super}, we have
\begin{equation}\label{eq:super-ges}
S(m,m+\ell) \, = \, \sum_{k} \. 2^{\ell-2k} \ts \binom{\ell}{2k} \ts S(m,k)\ts.
\end{equation}
Together with the symmetry $S(m,n)=S(n,m)$ and the initial condition $S(0,0)=1$,
this recursively defines a combinatorial interpretation. Indeed, at each
recursive step a $0$--$1$ word of length $(\ell-2k)+\ell\le 2\ell$ is added.
Since $k\le \ell/2$, the number of steps to compute $S(m,n)$, where $m\le n$, is
at most $O(n\log n)$.  All such words put together, this gives a word of length
$O(n^2\log n)$. The description of each step can be verified in $O(\log \ell)$
space, giving the total space $O(\log m + \log n)$, as desired.
\end{proof}

{\small 
\begin{rem}{\rm  Finding a combinatorial interpretation
for the super Catalan numbers $S(m,n)$ is repeatedly stated
as an open problem, see e.g.~\cite[Exc.~66]{Stanley-bij}.
Gessel writes: ``it remains to be seen whether~\eqref{eq:super-ges}
can be interpreted in a `natural' way''~\cite{Ges-super}.
This was later echoed in~\cite{GX}: ``Formula~\eqref{eq:super-ges}
allows us to construct recursively a set of cardinality $S(m,n)$,
but it is difficult to give a natural description of this set.''

Given the argument above, we are somewhat puzzled as to what
exactly is an open problem, and remain deeply dissatisfied with the
``you know it when you see it'' answer.  As best as we can tell,
the unwritten goal is to represent $S(m,n)$ as the number of
certain (collections of) lattice paths, which would allow
an easy inductive proof and passing to an existing
$q$-analogue~\cite{WaZ}.  

Alternatively, one can ask about a combinatorial interpretation 
coupled with a double counting argument elucidating the product 
formula for $S(m,n)$, as in~\cite{Scha-super}.  Such notion would
be more restrictive, of course, and thus harder to obtain.  It would 
also be a more exciting discovery.  
}
\end{rem}
}
\smallskip

\subsection{Gessel sequence}  \label{ss:inter-gessel}
In~\cite{Ges-sem}, the author defined the following \emph{Gessel sequence}:
$$
b_n \, := \ 2\cdot 5^n \. - \. (3+4i)^{n} \. - \. (3-4i)^{n}, \quad \text{where} \ \ i \. = \. \sqrt{-1}\ts,
$$
see also \cite[\href{https://oeis.org/A250102}{A250102}]{OEIS}.
Note that $b_n\in \zz$ since
$$b_n \, =  \, 2\cdot 5^n \. - \. 2\ts \sum_r \. (-1)^r \ts \binom{n}{2r} \. 3^{n-2r} \ts 4^{2r}\ts,
$$
and that $b_i \ge 0$ since $|3\pm 4i|=5$.  Note also that $\{b_n\}$ is C-recursive since
$$
B(t) \. :=\, \sum_{n=0}^\infty \. b_n \ts t^n \, = \, \frac{16 \ts t \ts (1+5t)}{(1-5t)\ts (1+6t+25t^2)}
$$
This is an example of a C-recursive nonnegative sequence without an easy
combinatorial interpretation.

\begin{conj}[cf.~\cite{Ges-sem}]  \label{c:ges-seq}
Sequence \ts $\{b_n\}$ \ts has a combinatorial interpretation
of type~$\CI$.
\end{conj}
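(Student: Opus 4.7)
The plan hinges on the elementary identity $(2+i)^2 = 3+4i$, which rewrites Gessel's sequence as
\[
b_n \ = \ 2\cdot 5^n \, - \, (2+i)^{2n} \, - \, (2-i)^{2n} \ = \ -\bigl((2+i)^n - (2-i)^n\bigr)^2 \ = \ 4\,c_n^2,
\]
where $c_n := \operatorname{Im}\bigl((2+i)^n\bigr) \in \zz$. It therefore suffices to exhibit a $\CI$-interpretation of~$c_n^2$, since tensoring with the fixed set $\{1,2,3,4\}$ immediately gives a $\CI$-interpretation of~$b_n$ itself.

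Next I would realize $c_n$ as a signed difference of $\CI$-counts in a small finite automaton. Scanning a word $w \in \{A,B,C\}^n$ left to right while tracking the residue $\#_C(w) \pmod 4$, one obtains a $4\times 4$ transfer matrix~$T$ with $T_{k,k} = 2$ (letters $A,B$) and $T_{k+1,k} = 1$ (letter~$C$, indices mod~$4$); this is a nonnegative integer matrix. Setting $N_k(n) := (T^n)_{k,0}$, a direct expansion of $(2+i)^n = \sum_w i^{\#_C(w)}$ yields $c_n = N_1(n) - N_3(n)$. Each~$N_k(n)$ already has an obvious $\CI$-description. Squaring gives \ts $c_n^2 = N_1^2 + N_3^2 - 2\, N_1 N_3$, \ts and the plan is to construct a logspace-computable injection \ts $\phi \colon \mathcal{Y}_n \hookrightarrow \mathcal{X}_n$, \ts where $\mathcal{Y}_n$ consists of pairs of length-$n$ walks with end-state pattern $(1,3)$ or $(3,1)$ and $\mathcal{X}_n$ consists of pairs with pattern $(1,1)$ or $(3,3)$. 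The complement $\mathcal{X}_n \setminus \phi(\mathcal{Y}_n)$ then has size exactly $(N_1 - N_3)^2 = c_n^2$ and is the desired $\cP_n$.

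The hard part will be the construction of~$\phi$. A naive suffix-swap at a common state preserves the multiset of endpoint residues modulo~$4$, so it cannot by itself convert a $(1,3)$-pair into a $(1,1)$-pair; the map $\phi$ must combine such a swap with a local ``parity correction'' that shifts $\#_C$ by~$\pm 2$ in one component at a canonical position (for instance, by toggling the first two $A$-letters of $w'$ against the first two $C$-letters). Ensuring that this canonical position is logspace-identifiable, and that the resulting map is injective rather than merely count-matching, is where I expect the main technical obstacle to lie. As a backup, one could try to prove that $c_n^2$ itself is $\NN$-rational and realize it as a walk-count in a larger nonnegative weighted graph; the obstruction there is that $B(t)$ has complex poles $(3\pm 4i)/25$ on the same circle $|t| = 1/5$ as its real pole, so Soittola's theorem does not apply and one would have to hand-engineer a nonminimal nonnegative realization. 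Either route should establish Conjecture~\ref{c:ges-seq} in the purely complexity-theoretic sense; neither is likely to produce the ``natural'' interpretation that Gessel is really after.
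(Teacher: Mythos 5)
First, note that the statement you are proving is stated in the paper as a \emph{conjecture}: the paper offers no proof and explicitly defers the problem, so your attempt can only be judged on its own merits.

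Your opening reduction is correct and genuinely useful: since $(2+i)^2=3+4i$ and $(2+i)(2-i)=5$, one indeed gets $b_n=-\bigl((2+i)^n-(2-i)^n\bigr)^2=4c_n^2$ with $c_n=\operatorname{Im}\bigl((2+i)^n\bigr)$, and the expansion $(2+i)^n=\sum_{w\in\{A,B,C\}^n}i^{\#_C(w)}$ correctly gives $c_n=N_1(n)-N_3(n)$ with each $N_k$ a transparently $\CI$-countable set of words. Squaring is also essential here, since $c_n$ changes sign (e.g.\ $c_7=-29$), and your setup handles that correctly. But everything after that point is exactly the open problem, not a proof of it. Writing $b_n$ as a nonnegative difference $|\mathcal X_n|-|\mathcal Y_n|$ of two logspace-decidable families only places the counting problem in a $\GapP$-type class; the entire content of the conjecture is the \emph{constructive subtraction}, i.e.\ producing the injection $\phi$ together with a logspace decision procedure for its image. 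You acknowledge this, but the sketch you offer does not survive scrutiny: ``toggling the first two $A$-letters of $w'$ against the first two $C$-letters'' is not even well defined on words with fewer than two $A$'s or two $C$'s, changes $\#_C$ by an amount depending on the word, and nothing in the description addresses injectivity. So the proposal reduces the conjecture to a cleaner-looking but still unsolved combinatorial problem; it does not establish it.

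Your proposed backup route is provably impossible, and you should strike it. Realizing $\{c_n^2\}$ as a walk count in a finite graph with nonnegative integer edge multiplicities is precisely an $\nn$-rational realization, and $\CR$ is closed under addition, so $B(t)=4\sum_n c_n^2\ts t^n$ would then lie in $\CR$. This contradicts the paper's own observation (via the Berstel--Soittola theorem) that $B(t)\notin\CR$, equivalently that $\{b_n\}$ admits no interpretation of type~$\CII$: the three poles of $B(t)$ all have modulus $1/5$ and $(3+4i)/5$ is not a root of unity. This is in fact the whole point of the example --- any solution must exploit the gap between $\CI$ and $\CII$, so no finite-automaton realization, however large, can work.
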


Let us briefly explain the significance of the sequence $\{b_n\}$.
The class $\CR$  of \emph{$\nn$-rational functions} is defined
to be the smallest class of GFs $F(t)=a_0 +a_1t+a_2t^2+\ldots\ts$,
such that

\smallskip

$(1)$ \ $0, t \in \ts \CR$,

$(2)$ \ $F_1, F_2 \in \ts \CR$ \ \, $\Longrightarrow$ \ \, $F_1+F_2, F_1\cdot F_2 \in \ts \CR$,

$(3)$ \ $F\in \ts \CR$, $F(0)=0$ \ \, $\Longrightarrow$ \ \, $1/(1-F) \in \ts \CR$\ts.

\smallskip

\nin
Clearly, $\CR \subseteq \zz(t)\cap\nn[[t]]$.  This is a class of GFs
for the number of words in \emph{regular languages}.  Equivalently,
this is a class of GFs for the number of accepted paths by a
\emph{finite-state automaton} (FSA).  The
\emph{Berstel--Soittola theorem} gives necessary and sufficient
conditions for a nonnegative GF to be in $\cR$~\cite{BerR}
(see also~\cite{Ges-sem}).
The corresponding sequences are exactly those that have a
combinatorial interpretation $\cP$, s.t.

\smallskip
\quad $\CII$ \ts membership in $\cP_n$ can be decided in $O(1)$ space.

\smallskip

\nin
In the case of the Gessel sequence, the Berstel--Soittola theorem implies that
\ts $B(t) \notin\CR$.  This implies that $\{b_n\}$ has no combinatorial
interpretation of type~$\CII$, i.e.\ cannot be described by a FSA,
thus making Conjecture~\ref{c:ges-seq} more challenging.

\begin{rem}{\rm
The (metamathematical) \emph{Sch\"utzenberger principle} states that
all combinatorial sequences with rational GFs must be $\nn$-rational,
see~\cite[p.~149]{BerR1}.
This all depends on the meaning of the word ``combinatorial'',
of course.  Philosophy aside, we believe the conjecture above
will be resolved positively, and plan to return to this problem
in the near future.
}\end{rem}

\begin{rem}{\rm
For general C-recursive integer sequences $\{a_n\}$, finding a combinatorial
interpretation is related to the classical \emph{Skolem problem} of deciding
if $a_n =0$ for some~$n$.  Skolem's problem is known to be \ts $\NP$-hard, but
not known to be decidable except for some special cases, see~\cite{ABV,OW}.
Since $\{a_n^2\}$ is also C-recursive, having a combinatorial
interpretation for $\{a_n^2\}$ would not be a contradiction to undecidability,
but would make it similarly curious as \textsc{Rectangular Tileability},
see~$\S$\ref{ss:tilings}.
}\end{rem}

\smallskip

\subsection{Unimodality of $q$-binomial coefficients}  \label{ss:inter-qbin}
A sequence $(a_0,a_1,\ldots,a_n)$ is called \emph{unimodal} if for some~$m$ we have
$$
a_0 \, \le \, a_1 \, \le \,  \cdots \, \le \,  a_m \, \ge \,  a_{m+1}
\, \ge \, \cdots \, \ge \, a_n\ts.
$$
Whenever a combinatorial sequence is proved unimodal, one can ask for
a combinatorial interpretation of \ts $\bigl\{a_k-a_{k-1}, 1\le k \le m\bigr\}$ \ts
and \ts $\bigl\{a_{k}-a_{k+1}, m\le k \le n-1\bigr\}$. For example, for
$a_k = \binom{n}{k}$, unimodality can be checked directly.  The differences
\ts $\binom{n}{k}-\binom{n}{k-1}$, $1\le k \le n/2$ \ts are the
\emph{ballot numbers}, generalizing Catalan numbers.

The \emph{$q$-binomial}~(Gaussian)~\emph{coefficients} are defined as:
$$
\binom{m+\ell}{m}_q
\, = \ \. \frac{(q^{m+1}-1)\. \cdots\. (q^{m+\ell}-1)}{(q-1)\.\cdots\. (q^{\ell}-1)}
\ \. = \, \, \sum_{k=0}^{\ell\ts m} \, \. p_k(\ell,m) \. q^k\ts.
$$
\emph{Sylvester's theorem} establishes unimodality of the sequence
$$
p_0(\ell,m)\ts, \, p_1(\ell,m)\ts, \, \ldots \,, \, p_{\ell\ts m}(\ell,m)\ts.
$$
This celebrated result was first conjectured by Cayley in~1856, and proved
by Sylvester using \emph{invariant theory} in a pioneering~1878
paper~\cite{Syl} (see~\cite{OHara,Pak-Panova-unim,PoRo,Pro} for modern proofs).

\begin{thm}[Pak--Panova, 2015]  \label{t:PP-qbin}
Fix $\ell,m\ge 1$.  The sequence
$$
\bigl\{ p_k(\ell,m)\ts - \ts p_{k-1}(\ell,m), \ts 1\le k \le \ell\ts m/2\bigr\}
$$
has a combinatorial interpretation of type~$\CI$.\footnote{This combinatorial
interpretation is based on O'Hara's identity~\cite{OHara} and is quite involved.
We give it on p.~9 in \ts
\href{http://www.math.ucla.edu/~pak/hidden/papers/Panova_Porto_meeting.pdf}{https://tinyurl.com/ydemhyf5}.}
\end{thm}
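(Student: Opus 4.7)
The plan is to construct an explicit injection $\phi: P_{k-1}(\ell,m) \hookrightarrow P_k(\ell,m)$ between partitions of $k-1$ and of~$k$ fitting inside the $[\ell\times m]$ rectangle, where $P_j(\ell,m)$ denotes this set of partitions, and to take
\[
\cD_k \, := \, P_k(\ell,m)\setminus \phi\bigl(P_{k-1}(\ell,m)\bigr).
\]
Since $|P_j(\ell,m)|=p_j(\ell,m)$, this gives $|\cD_k|=p_k(\ell,m)-p_{k-1}(\ell,m)$, so the sequence is combinatorially interpreted. For~$\CI$, both $\phi$ and the predicate ``$\mu \in \phi(P_{k-1})$'' must be computable in $O(\log(\ell m))$ work space, uniformly in $\ell,m,k$.

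First, I would build $\phi$ by induction on~$\ell+m$, driven by O'Hara's partition identity as used in~\cite{OHara,Pak-Panova-unim}. That identity writes $\binom{\ell+m}{m}_q$ as a positive sum of products of smaller $q$-binomials; combinatorially, each summand corresponds to decomposing a partition in $[\ell\times m]$ into sub-partitions fitting into explicit smaller sub-rectangles, and every sub-rectangle has both side lengths bounded by a constant fraction of~$\ell+m$. The injection $\phi$ is obtained by locating the canonical O'Hara decomposition of $\lambda\in P_{k-1}$, selecting the distinguished sub-rectangle whose parameters remain in the unimodal range $k' \le \ell' m'/2$ (using the complementation $\lambda \mapsto (m^\ell)\setminus\lambda$ when necessary), and applying the inductively constructed injection there. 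The complement $\cD_k$ then consists of those $\mu \in P_k(\ell,m)$ whose O'Hara-distinguished sub-rectangle is in the inductively defined complement at the next level.

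To certify membership in $O(\log(\ell m))$ work space I would not materialize any intermediate partition. Given $\mu$ as input, the verifier walks down the O'Hara recursion, at each level determining (by reading appropriate statistics of $\mu$, such as its head row and leftmost column interaction with the sub-rectangle boundary) which summand of O'Hara's identity contains~$\mu$, and then descending into the distinguished sub-rectangle. Since both side lengths shrink by a constant factor per level, the recursion depth is $O(\log(\ell+m))$; at any moment only one triple $(\ell',m',k')$ plus a current-level counter of $O(\log(\ell+m))$ bits must be stored, and base cases at small parameters are handled by a constant-size lookup. Because $k \le \ell m/2$, this is $O(\log k)$ space, matching~$\CI$.

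The main obstacle will be the local detectability of the O'Hara summand: given $\mu$, the algorithm must decide in log-space which summand contains it, without repeatedly scanning or storing~$\mu$. The positivity of O'Hara's identity is originally proved via an iterative matching rather than a closed-form splitting rule (compare the discussion of iterative bijections in $\S$\ref{ss:bij-complexity}), so the non-trivial step is to extract a one-shot splitting rule depending only on a few statistics of $\mu$ that is simultaneously faithful to O'Hara's summand structure and computable in $O(\log(\ell+m))$ space. This is precisely the ``quite involved'' part alluded to in the footnote. A secondary challenge is ensuring that the parameter range $k' \le \ell' m'/2$ is preserved at every recursive call; this is addressed by applying the conjugation--complementation symmetry at the levels where the constraint would otherwise be violated, which forces a careful parity analysis in the induction but no additional space overhead.
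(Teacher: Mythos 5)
Your proposal correctly identifies O'Hara's identity as the engine, but the architecture you build on top of it has two gaps that would make the argument fail as written. First, the ``constant-fraction shrinking'' you attribute to O'Hara's identity is not a property it has. In the KOH form, the summands are indexed by partitions $\lambda\vdash m$, and the resulting $q$-binomial factors $\binom{a_i}{b_i}_q$ do not all have parameters bounded by a constant fraction of $\ell+m$; for instance the $\lambda=(1^m)$ summand produces a factor whose top parameter drops only by $O(m)$, so iterating can take polynomially many levels, not $O(\log(\ell+m))$. Your entire space bound rests on logarithmic recursion depth, so this is not a repairable detail. Second, and more fundamentally, recursing into a single ``distinguished sub-rectangle'' is not a valid way to track the difference $p_k-p_{k-1}$ through a product. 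Each KOH summand is a \emph{product} of symmetric unimodal polynomials with a common center; the difference of consecutive coefficients of such a product is controlled by decomposing \emph{every} factor into a positive sum of symmetric intervals $q^j+q^{j+1}+\cdots+q^{c-j}$ and summing over all tuples of interval choices. This forces a branching structure, which is exactly why the interpretation referenced in the footnote is phrased in terms of partition-labeled \emph{trees} (see also the remark after the theorem): each internal node carries the KOH index $\lambda$ and the interval data for all of its factors, and membership is a conjunction of local arithmetic conditions, each checkable with $O(\log \ell m)$ counters. A path-like descent into one factor cannot account for the difference.

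The complement-of-an-injection formulation also puts you on the wrong side of a known difficulty. Defining $\cD_k=P_k(\ell,m)\setminus\phi(P_{k-1}(\ell,m))$ makes the object a partition $\mu$ and the membership predicate ``$\mu\notin \mathrm{im}(\phi)$'' for an iteratively defined injection; deciding membership in the image of such maps is precisely what is expensive (compare the analysis of O'Hara's algorithm in~\cite{Konv-Pak}, cited in $\S$\ref{ss:bij-complexity}, with its $\exp\Omega(\sqrt[3]{n})$ worst-case lower bound). You flag this as ``the main obstacle'' but offer no mechanism to overcome it. The intended route avoids the injection entirely: the objects of the combinatorial interpretation are the trees themselves, presented explicitly as input, so the verifier never has to invert or characterize the image of any map --- it only checks local inequalities at each node. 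If you want to salvage your write-up, replace the injection/complement framework by the direct tree description and replace the single-factor descent by the full interval decomposition of each factor of each KOH summand.
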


Note that the sequence in the theorem is a special case of Kronecker coefficients:
$$
p_k(\ell,m) \ts - \ts p_{k-1}(\ell,m) \. = \. g\bigl(m^\ell, \ts m^\ell, (m\ell-k,k)\bigr).
$$
In a roundabout way, the technical difficulties involved in the proof of
Theorem~\ref{t:PP-qbin} suggest that Open Problem~\ref{op:kron} is
unlikely to have a easy solution.  Moreover, since the combinatorial
interpretation of Kronecker coefficients in the theorem is in terms
of certain partition-labeled trees, it is unlikely that \ts
$g(\la,\mu,\nu)$ in general can be expressed as the number of certain
Young tableaux (cf.~\cite{Bla}).

\smallskip

Unimodality and related log-concavity problems are plentiful
in combinatorics, with many connections and applications to other fields;
see e.g.~\cite{Brenti} for an introduction.  While occasionally the proofs
are elegant combinatorial constructions (see e.g.~\cite{HS,Kratt-match}),
most of them are rather difficult and technical, involving fundamentally
non-combinatorial tools.  Thus, for example, it would be unreasonable
to expect a direct combinatorial proof of log-convexity of the partition
function:
$$
p(n-1) \. p(n+1) \, \ge \, p(n)^2 \quad \text{for all} \ \ n\ge 26,
$$
see~\cite{DP}.  Similarly, it would be unreasonable to expect a
combinatorial proof of the \emph{Adiprasito--Huh--Katz theorem} on
log-concavity of \ts $\bigl\{a_k(G)\bigr\}$, where $G=(V,E)$
is a simple graph and $a_k(G)$ is the number of spanning forests in~$G$
with $k$ edges~\cite{AHK}.

\bigskip


\section{Final Remarks}\label{s:fin-rem}

\subsection{} \label{ss:finrem-prequel}
Enumerative combinatorics is so enormous in both range and scope, that we touched
upon very few themes.  If one is to summarize our choices, we tried to explain
how to ask a good question on the subject.  This includes both the types of
questions one can ask from the complexity point of view, as well as the sources
of combinatorial sequences and combinatorial objects to study.

\subsection{} \label{ss:finrem-quote}
This paper can be viewed as a technical followup to an elegant, refreshingly
opinionated and very accessible introductory article~\cite{Zei}.  To understand
the state of art of \emph{Enumerative Combinatorics}, we refer to an excellent
monograph~\cite{Stanley-EC2}, which is remarkable in both the content
(check all the exercises!) and presentation style.

For \emph{Computational Complexity} definitions and the background,
we recommend~\cite{MM} as a fun and accessible introductory textbook.
Other good options include:  \cite{Gold,Pap} are thorough monographs,
\cite{Aar} is a beautifully written up to date introductory survey,
and~\cite{Wig} is a remarkable recent monograph-length survey of
the area with a lot of advanced mathematical content.

As of now, the complexity approach pioneered by Wilf
in~\cite{Wilf} has yet to fully blossom into a research
area of \emph{Computational Combinatorics}.\footnote{See our blog post \ts 
\url{https://igorpak.wordpress.com/2012/07/25/computational-combinatorics/}}
 However,
the fundamentals of computational complexity are clearly
as important as basic algebra and probability.  As we tried to
explain on these pages, this computational approach can change
your vision of the area and guide your understanding.

\vskip.9cm

{\small
\subsection*{Acknowledgements}
We are very grateful to Matthias Aschenbrenner,
Sasha Barvinok,
Art\"em Chernikov, Persi Diaconis, Sam Dittmer,
Jacob Fox, \'Eric Fusy, Bon-Soon Lin,
Pasha Pylyavskyy, Vic Reiner,
Richard Stanley and Jessica Striker for
helpful remarks and interesting conversations.

We are thankful to Michael Albert, Allen Gehret,
Fredrik Johansson, 
Alex Mennen, Marni Mishna, Cris Moore, Jay Pantone,
Martin Rubey, Gilles Schaeffer, Jeff Shallit and
Vince Vatter for reading the paper, their comments
and useful remarks.  Ira Gessel kindly suggested
the proof of Theorem~\ref{t:planar-poly}.

Special thanks to Stephen DeSalvo, Scott Garrabrant,
Alejandro Morales, Danny Nguyen, Greta Panova, Jed Yang
and Damir Yeliussizov for many collaborations and numerous
discussions, some of which undoubtedly influenced the
presentation above.
The author was partially supported by the~NSF.
}

\newpage


\end{document}